\newcommand\dbarb{\bar{\partial}_b}
\DeclareMathOperator\eo{eo}
\DeclareMathOperator\ooee{oe}
\newcommand\loc{\operatorname{loc}}
\newcommand\Dom{\operatorname{Dom}}
\newcommand\eucl{\operatorname{eucl}}
\newcommand\relp{\operatorname{P}}
\newcommand\spnc{\operatorname{Spin}_{\bbC}}
\newcommand\Tr{\operatorname{tr}}
\newcommand\iso{\operatorname{iso}}
\newcommand\hn{(1,-\frac 12)}
\newcommand\cT{\mathcal T}
\newcommand\cM{\mathcal M}
\newcommand\cI{\mathcal I}
\newcommand\cU{\mathcal U}
\newcommand\cV{\mathcal V}
\newcommand\sO{\mathfrak O}
\newcommand\cQ{\mathcal Q}
\newcommand\cK{\mathcal K}
\newcommand\cB{\mathcal B}
\newcommand\cR{\mathcal R}
\newcommand\cP{\mathcal P}
\newcommand{\ccD}{\mathfrak D}
\newcommand\ho{\mathfrak H}
\newcommand\px{\tilde x}
\newcommand\pz{\tilde z}
\newcommand\bzero{\boldsymbol 0}
\newcommand\cF{\mathcal F}
\newcommand\cE{\mathcal E}
\newcommand\bX{\overline{X}}
\newcommand\dbar{\bar{\pa}}
\newcommand\bz{\bar{z}}
\newcommand\bj{\bar{j}}
\newcommand\sypr{\pi}
\newcommand\syprc{\bar{\pi}}
\newcommand\scp{\mathfrak p}
\newcommand\sd{\mathfrak d}
\newcommand\tq{\widetilde{q}}
\newcommand\tX{\widetilde{X}}
\newcommand\tK{\widetilde{K}}
\newcommand\sh[1]{\overset{\sqcap}{#1}}
\newcommand\RC[1]{{}^R\overline{T^*{#1}}}
\newcommand\HC[1]{{}^H\overline{T^*{#1}}}
\newcommand\EHC[1]{{}^{eH}\overline{T^*{#1}}}
\newcommand\sym[1]{{}^{#1}\sigma}
\newcommand\esym[2]{{}^{#1}\sigma^{#2}}
\newcommand{\Spn}{S\mspace{-10mu}/ }
\newcommand\Ker{\operatorname{ker}}
\newcommand\cA{\mathcal{A}}
\newcommand\cL{\mathcal{L}}
\newcommand\cS{\mathcal{S}}
\newcommand\bcS{\bar{\mathcal{S}}}
\newcommand\cD{\mathcal{D}}
\newcommand\tsigma{\tilde\sigma}
\newcommand\bomega{\bar{\omega}}
\newcommand\ha{\frac12}
\renewcommand\Re{\operatorname{Re}}
\renewcommand\Im{\operatorname{Im}}
\newcommand\bbC{\mathbb C}
\newcommand\bbR{\mathbb R}
\newcommand\pa{\partial}
\newcommand\restrictedto{\upharpoonright}
\newcommand\CI{{\mathcal C}^{\infty}}
\newcommand\Id{\operatorname{Id}}
\DeclareMathOperator{\odd}{o}
\DeclareMathOperator{\even}{e}
\newtheorem{theorem}{Theorem}
\newtheorem{proposition}{Proposition}
\newtheorem{corollary}{Corollary}
\newtheorem{lemma}{Lemma}
\theoremstyle{definition}
\theoremstyle{remark}
\newtheorem{remark}{Remark}
\begin{document}

\title{Subelliptic Spin\,${}_{\bbC}$ Dirac operators, II\\
Basic Estimates}

\author{Charles L. Epstein\footnote{Keywords: Spin${}_{\bbC}$ Dirac operator,
index, subelliptic boundary value problem, $\dbar$-Neumann condition, boundary
layer, Heisenberg calculus, extended Heisenberg calculus. Research partially
supported by NSF grants DMS99-70487 and DMS02-03795 and the Francis J. Carey
term chair.  E-mail: cle@math.upenn.edu} \\ Department of Mathematics\\
University of Pennsylvania}

\date{November 2, 2007: 2nd revised version}

\maketitle

\centerline{\Large{\it This paper dedicated to Peter D. Lax}}
\centerline{\Large{\it on the occasion of his Abel Prize.}}

\begin{abstract}  We
assume that the manifold with boundary, $X,$ has a Spin${}_{\bbC}$-structure
with spinor bundle $\Spn.$ Along the boundary, this structure agrees with the
structure defined by an infinite order integrable almost complex structure and
the metric is K\"ahler. In this case the Spin${}_{\bbC}$-Dirac
operator $\eth$ agrees with $\dbar+\dbar^*$ along the boundary.  The induced
CR-structure on $bX$ is integrable and either strictly pseudoconvex or strictly
pseudoconcave. We assume that $E\to X$ is a complex vector bundle, which has an
infinite order integrable complex structure along $bX,$ compatible with that
defined along $bX.$ In this paper use boundary layer methods to prove
subelliptic estimates for the twisted Spin${}_{\bbC}$-Dirac operator acting on
sections on $\Spn\otimes E.$ We use boundary conditions that are modifications
of the classical $\dbar$-Neumann condition. These results are proved by using
the extended Heisenberg calculus.
\end{abstract}

\section*{Introduction}
Let $X$ be an even dimensional manifold with a Spin${}_{\bbC}$-structure,
 see~\cite{LawsonMichelsohn}.  A compatible choice of metric, $g,$ defines a
 Spin${}_{\bbC}$-Dirac operator, $\eth$ which acts on sections of the bundle of
 complex spinors, $\Spn.$ This bundle splits as a direct sum
 $\Spn=\Spn^{\even}\oplus\Spn^{\odd}.$ The metric on $TX$ induces a metric
 on the bundle of spinors. We let $\langle\sigma,\sigma\rangle_g$ denote the
 pointwise inner product. This, in turn, defines an inner product on the space
 of sections of $\Spn,$ by setting:
$$\langle\sigma,\sigma\rangle_X=\int\limits_{X}\langle\sigma,\sigma\rangle_g
dV_g$$

If $X$ has an almost complex structure, then this structure defines a
Spin${}_{\bbC}$-structure, see~\cite{duistermaat}.  If the complex structure is
integrable, then the bundle of complex spinors is canonically identified with
$\oplus_{q\geq 0}\Lambda^{0,q}.$ We use the notation
\begin{equation}
\Lambda^{\even}=\bigoplus\limits_{q=0}^{\lfloor\frac{n}{2}\rfloor}\Lambda^{0,2q}\quad
\Lambda^{\odd}=\bigoplus\limits_{q=0}^{\lfloor\frac{n-1}{2}\rfloor}\Lambda^{0,2q+1}.
\end{equation}
If the metric is K\"ahler, then the
Spin${}_{\bbC}$ Dirac operator is given by
$$\eth=\dbar+\dbar^*.$$ Here $\dbar^*$ denotes the formal adjoint of $\dbar$
defined by the metric. This operator is called the Dolbeault-Dirac operator by
Duistermaat, see~\cite{duistermaat}.  If the metric is Hermitian, though not
K\"ahler, then
$$\eth_{\bbC}=\dbar+\dbar^*+\cM_0,$$ 
with $\cM_0$  a homomorphism carrying $\Lambda^{\even}$ to
$\Lambda^{\odd}$ and vice versa. It vanishes at points where the metric is
K\"ahler.  It is customary to write $\eth=\eth^{\even}+\eth^{\odd}$
where
$$\eth^{\even}:\CI(X;\Spn^{\even})\longrightarrow\CI(X;\Spn^{\odd}),$$
and $\eth^{\odd}$ is the formal adjoint of $\eth^{\even}.$ 

If $X$ has a boundary, then the kernels and cokernels of $\eth^{\eo}$ are
generally infinite dimensional. To obtain a Fredholm operator we need to impose
boundary conditions. In this instance, there are no local boundary conditions
for $\eth^{\eo}$ that define elliptic problems.  Starting with the work of
Atiyah, Patodi and Singer, the basic boundary value problems for Dirac
operators on manifolds with boundary have been defined by classical
pseudodifferential projections acting on the sections of the spinor bundle
restricted to the boundary. In this paper we analyze \emph{subelliptic}
boundary conditions for $\eth^{\eo}$ obtained by modifying the classical
$\dbar$-Neumann and dual $\dbar$-Neumann conditions. The $\dbar$-Neumann
conditions on a strictly pseudoconvex manifold allow for an infinite
dimensional null space in degree $0$ and, on a strictly pseudoconcave manifold,
in degree $n-1.$ We modify these boundary conditions by using generalized
Szeg\H o projectors, in the appropriate degrees, to eliminate these infinite
dimensional spaces.

In this paper we prove the basic analytic results needed to do index theory for
these boundary value problems.  To that end, we compare the projections
defining the subelliptic boundary conditions with the Calderon projector and
show that, in a certain sense, these projections are relatively Fredholm. We
should emphasize at the outset that these projections are not relatively
Fredholm in the usual sense of say Fredholm pairs in a Hilbert space, used in
the study of elliptic boundary value problems. Nonetheless, we can use our
results to obtain a formula for a parametrix for these subelliptic boundary
value problems that is precise enough to prove, among other things, higher norm
estimates. This formula is related to earlier work of Greiner and Stein, and
Beals and Stanton, see~\cite{Greiner-Stein1,Beals-Stanton}. We use the extended
Heisenberg calculus introduced in~\cite{EpsteinMelrose3}. Similar classes of
operators were also introduced by Greiner and Stein, Beals and Stanton as well
as Taylor, see~\cite{Greiner-Stein1,Beals-Stanton,Beals-Greiner1,Taylor6}. The
results here and their applications in~\cite{Epstein4} suggest that the theory
of Fredholm pairs has an extension to subspaces of $\CI$ sections where the
relative projections satisfy appropriate tame estimates.

In this paper $X$ is a Spin${}_{\bbC}$-manifold with boundary. The
Spin${}_{\bbC}$ structure along the boundary arises from an almost complex
structure that is integrable to infinite order. This means that the induced
CR-structure on $bX$ is integrable and the Nijenhuis tensor vanishes to
infinite order along the boundary. We generally assume that this CR-structure
is either strictly pseudoconvex (or pseudoconcave). When we say that ``$X$ is a
strictly pseudoconvex (or pseudoconcave) manifold,'' this is what we
mean. We usually treat the pseudoconvex and pseudoconcave cases in tandem. When
needed, we use a subscript $+$ to denote the pseudoconvex case and $-,$ the
pseudoconcave case. 

Indeed, as all the important computations in this paper are calculations in
Taylor series along the boundary, it suffices to consider the case that the
boundary of X is in fact a hypersurface in a complex manifold, and we
often do so.  We suppose that the boundary of $X$ is the zero set of a function
$\rho$ such that
\begin{enumerate}
\item $d\rho\neq 0$ along $bX.$
\item $\partial\dbar\rho$ is positive definite along $bX.$ Hence $\rho<0,$ if
  $X$ is strictly pseudoconvex and $\rho>0,$ if $X$ is strictly pseudoconcave.
\item The length of $\dbar\rho$ in the metric with K\"ahler form $-i\partial\dbar\rho$ is
 $\sqrt{2}$ along $bX.$ This implies that the length $d\rho$ is 2 along $bX.$
\end{enumerate}
If $bX$ is a strictly pseudoconvex or pseudoconcave hypersurface, with respect
 to the infinite order integrable almost complex structure along $bX,$ then a
 defining function $\rho$ satisfying these conditions can always be found.  

The Hermitian metric on $X,$ near to $bX,$ is defined by $\pa\dbar\rho.$ If the
 almost complex structure is integrable, then this metric is K\"ahler. This
 should be contrasted to the usual situation when studying boundary value
 problems of APS type: here one usually assumes that the metric is a product in
 a neighborhood of the boundary, with the boundary a totally geodesic
 hypersurface. Since we are interested in using the subelliptic boundary value
 problems as a tool to study the complex structure of $X$ and the CR-structure
 of $bX,$ this would not be a natural hypothesis.
 Instead of taking advantage of the simplifications that arise
 from using a product metric, we use the simplifications that result from using
 K\"ahler coordinates.

Let $\cP^{\eo}$ denotes the Calderon projectors and $\cR^{\prime\eo},$ the projectors
defining the subelliptic boundary value problems on the even (odd) spinors,
respectively. These operators are defined in~\cite{Epstein4} as well as in
Lemmas~\ref{lemm4} and~\ref{lemm5}. The main objects
of study in this paper are the operators:
\begin{equation}
\cT^{\prime\eo}=\cR^{\prime\eo}\cP^{\eo}+(\Id-\cR^{\prime\eo})(\Id-\cP^{\eo}).
\label{eqn6.30.1}
\end{equation}
These operators are elements of the extended Heisenberg calculus. If $X$ is
strictly pseudoconvex, then $\cT^{\prime\eo}$ is an elliptic operator, in the
classical sense, away from the positive contact direction. Along the positive
contact direction, most of its principal symbol vanishes. If instead we compute
its principal symbol in the Heisenberg sense, we find that this symbol has a
natural block structure:
\begin{equation}
\left(\begin{matrix} A_{11} & A_{12}\\ A_{21} & A_{22}\end{matrix}\right).
\label{7.12.6}
\end{equation}
As an element of the Heisenberg calculus $A_{ij}$ is a symbol of order
$2-(i+j).$  The inverse has the identical block structure 
\begin{equation}
\left(\begin{matrix} B_{11} & B_{12}\\ B_{21} & B_{22}\end{matrix}\right),
\end{equation}
where the order of $B_{ij}$ is $(i+j)-2.$ The principal technical difficulty
that we encounter is that the symbol of $\cT^{\prime\eo}$ along the positive contact
direction could, in principle, depend on higher order terms in the symbol of
$\cP^{\eo}$ as well as the geometry of $bX$ and its embedding as the boundary
of $X.$ In fact, the Heisenberg symbol of $\cT^{\prime\eo}$ is determined by the
principal symbol of $\cP^{\eo}$ and depends in a very simple way on the
geometry of $bX\hookrightarrow X.$ It requires some effort to verify this
statement and explicitly compute the symbol. Another important result is that
the leading order part of $B_{22}$ vanishes. This allows the deduction of the
classical sharp anisotropic estimates for these modifications of the
$\dbar$-Neumann problem from our results. Analogous remarks apply to strictly
pseudoconcave manifolds with the two changes that the difficulties occur along
the negative contact direction, and the block structure depends on the parity
of the dimension.

As it entails no additional effort, we work in somewhat greater generality and
consider the ``twisted'' Spin${}_{\bbC}$ Dirac operator. To that end, we let
$E\to X$ denote a complex vector bundle and consider the Dirac operator acting
on sections of $\Spn\otimes E.$ The bundle $E$ is assumed to have an almost
complex structure near to $bX,$ that is infinite order integrable along $bX.$
We assume that this almost complex structure is compatible with that defined on
$X$ along $bX.$ By this we mean $E\to X$ defines, along $bX,$ an infinite order
germ of a holomorphic bundle over the infinite order germ of the holomorphic
manifold.We call such a bundle a complex vector bundle \emph{compatible} with
$X.$ When necessary for clarity, we let $\dbar_E$ denote the $\dbar$-operator acting on
sections of $\Lambda^{0,q}\otimes E.$ A Hermitian metric is fixed on the fibers
of $E$ and $\dbar_E^*$ denotes the adjoint operator. Along $bX,$
$\eth_E=\dbar_E+\dbar_E^*.$ In most of this paper we simplify the notation by
suppressing the dependence on $E.$

We first recall the definition of the Calderon projector in this case, which is
due to Seeley. We follow the discussion in~\cite{BBW}. We then examine its
symbol and the symbol of $\cT^{\eo}_{\pm}$ away from the contact directions. Next we
compute the symbol in the appropriate contact direction. We see that
$\cT^{\eo}_{\pm}$ is a graded elliptic system in the extended Heisenberg
calculus. Using the parametrix for $\cT^{\eo}_{\pm}$ we obtain parametrices for the
boundary value problems considered here as well as those introduced
in~\cite{Epstein4}. Using the parametrices we prove subelliptic estimates for
solutions of these boundary value problems formally identical to the classical
$\dbar$-Neumann estimates of Kohn. We are also able to characterize the adjoints of the
graph closures of the various operators as the graph closures of the formal
adjoints. 

{\small \centerline{Acknowledgments} Boundary conditions similar to those
considered in this paper were first suggested to me by Laszlo Lempert. I would
like to thank John Roe for some helpful pointers on the Spin${}_{\bbC}$ Dirac
operator.}

\section{The extended Heisenberg Calculus}
The main results in this paper rely on the computation of the symbol of an
operator built out of the Calderon projector and a projection operator in the
Heisenberg calculus. This operator belongs to the extended Heisenberg calculus,
as defined in~\cite{EpsteinMelrose3}. While we do not intend to review this
construction in detail, we briefly describe the different symbol classes within
a single fiber of the cotangent bundle. This suffices for our purposes as all
of our symbolic computations are principal symbol computations, which are, in
all cases, localized to a single fiber.

Each symbol class is defined by a compactification of the fibers of $T^*Y.$ In
our applications, $Y$ is a contact manifold of dimension $2n-1.$ Let $L$ denote
the contact line within $T^*Y.$ We assume that $L$ is oriented and $\theta$ is
a global, positive section of $L.$ According to Darboux's theorem, there are
coordinates $(y_0,y_1,\dots,y_{2(n-1)})$ for a neighborhood $U$ of $p\in Y,$
so that
\begin{equation}
\theta\restrictedto_{U}=dy_0+\frac{1}{2}\sum_{j=1}^{n-1}[y_{j}dy_{j+n}-y_{j+n}dy_{j}],
\label{7.14.3}
\end{equation}
Let $\eta$ denote the local fiber coordinates on $T^*Y$ defined by the
trivialization
$$\{dy_0,\dots,dy_{2(n-1)}\}.$$ We often use the splitting
$\eta=(\eta_0,\eta').$ In the remainder of this section we do essentially all
our calculations at the point $p.$ As such coordinates can be found in a
neighborhood of any point, and in light of the invariance results established
in~\cite{EpsteinMelrose3}, these computations actually cover the general case.

\subsection{The compactifications of $T^*Y$}
We define three
compactifications of the fibers of $T^*Y.$ The first is the standard radial
compactification, $\RC{Y},$ defined by adding one point at infinity for each
orbit of the standard $\bbR_+$-action, $(y,\eta)\mapsto (y,\lambda\eta).$ 
Along with $y$, standard polar coordinates in the $\eta$-variables define local
coordinates near $b\RC{Y}:$ 
\begin{equation}
r_R=\frac{1}{|\eta|},\quad \omega_j=\frac{\eta_j}{|\eta|},
\label{7.7.4}
\end{equation}
with $r_R$ a smooth defining function for $b\RC{Y}.$

To define the Heisenberg compactification we first need to define a parabolic
action of $\bbR_+.$ Let $T$ denote the vector field defined by the conditions
$\theta(T)=1,i_Td\theta=0.$ As usual $i_T$ denotes interior product with the
vector field $T.$ Let $H^*$ denote the subbundle of $T^*Y$ consisting of forms
that annihilate $T.$ Clearly $T^*Y=L\oplus H^*,$ let $\pi_L\oplus\pi_{H^*}$
denote the bundle projections defined by this splitting. The parabolic action
of $\bbR_+$ is defined by
\begin{equation}
(y,\eta)\mapsto (y,\lambda \pi_{H^*}(y,\eta)+\lambda^2\pi_{T}(y,\eta))
\end{equation}
In the Heisenberg compactification we add one point at infinity for each orbit
under this action. A smooth defining function for the boundary is given by
\begin{equation}
r_H=[|\pi_{H^*}(y,\eta)|^4+|\pi_{T}(y,\eta)|^2]^{-\frac 14}.
\end{equation}
In~\cite{EpsteinMelrose3} it is shown that the smooth structure of $\HC{Y}$
depends only on the contact structure, and not the choice of contact form.

In the fiber over $y=0,$  $r_H=[|\eta'|^4+|\eta_0|^2]^{-\frac 14}.$
Coordinates near the boundary in the fiber over $y=0$ are given by
\begin{equation}
r_H, \sigma_0=\frac{\eta_0}{[|\eta'|^4+|\eta_0|^2]^{\frac 12}},
\sigma_j=\frac{\eta_j}{[|\eta'|^4+|\eta_0|^2]^{\frac 14}},\quad j=1,\dots,
2(n-1).
\end{equation}

The extended Heisenberg compactification can be defined by performing a blowup
of either the radial or the Heisenberg compactification. Since we need to lift
classical symbols to the extended Heisenberg compactification, we describe the
fiber of $\EHC{Y}$ in terms of a blowup of $\RC{Y}.$ In this model we
parabolically blowup the boundary of contact line, i.e., the boundary of the
closure of $L$ in $\RC{Y}.$ The conormal bundle to the $b\RC{Y}$ defines the
parabolic direction. The fiber of the compactified space is a manifold with
corners, having three hypersurface boundary components.  The two boundary
points of $\overline{L}$ become $2(n-1)$ dimensional disks. These are called
the upper and lower Heisenberg faces. The complement of $b\overline{L}$ lifts
to a cylinder, diffeomorphic to $(-1,1)\times S^{2n-3},$ which was call the
``classical'' face. Let $r_{e\pm}$ be defining functions for the upper and
lower Heisenberg faces and $r_c$ a defining function for the classical face.
From the definition we see that coordinates near the Heisenberg faces, in the
fiber over $y=0,$ are given by
\begin{equation}
r_{eH}=[r_R^2+|\omega'|^4]^{\frac 14},
\quad\tsigma_j=\frac{\omega_j}{r_{eH}},\text{ for } j=1,\dots,2n-2,
\label{7.7.5}
\end{equation}
with $r_{eH}$ a smooth defining function for the Heisenberg faces. In order for
an arc within $T^*Y$ to approach either Heisenberg face it is necessary that,
for any $\epsilon>0,$
$$|\eta'|\leq \epsilon |\eta_0|,$$ 
as $|\eta|$ tends to infinity. Indeed, for arcs that terminate on the interior
of a Heisenberg face the ratio $\eta'/\sqrt{|\eta_0|}$ approaches a limit. If
$\eta_0\to +\infty\,(-\infty),$ then the arc approaches the upper (lower)
parabolic face.  In the interior of the Heisenberg faces we can use
$[|\eta_0|]^{-\frac 12}$ as a defining function.

\subsection{The symbol classes and pseudodifferential operators}
The  symbols of order zero are defined in all cases as the smooth
functions on the compactified cotangent space:
\begin{equation}
S^0_{R}=\CI(\RC{Y}),\quad S^0_{H}=\CI(\HC{Y}),\quad S^0_{eH}=\CI(\EHC{Y}).
\end{equation}
In the classical and Heisenberg cases there is a single order parameter for
symbols, the symbols of order $m$ are defined as
\begin{equation}
S^m_{R}=r_R^{-m}\CI(\RC{Y}),\quad S^m_{H}=r_H^{-m}\CI(\HC{Y}).
\end{equation}
In the extended Heisenberg case there are three symbolic orders $(m_c,m_+,m_-),$
the symbol classes are defined by
\begin{equation}
S^{m_c,m_+,m_-}_{eH}=r_c^{-m_c}r_{e+}^{-m_+}r_{e-}^{-m_-}S^0_{eH}.
\end{equation}
If $a$ is a symbol belonging to one of the three classes above, and
$\varphi$ is a smooth function with compact support in $U,$ then the
Weyl quantization rule is used to define the localized operator $M_{\varphi}a(X,D)
M_{\varphi}:$ 
\begin{equation}
M_\varphi a(X,D)M_\varphi f=
\int\limits_{R^{2n-1}}\int\limits_{R^{2n-1}}
\varphi(y)a(\frac{y+y'}{2},\eta)\varphi(y')f(y')e^{i\langle\eta,y-y'\rangle}\frac{dy'd\eta}
{(2\pi)^{2n-1}}.
\end{equation}
The operator $M_{\varphi}$ is multiplication by $\varphi.$ As usual, the Schwartz
kernel of $a(X,D)$ is assumed to be smooth away from the diagonal.
    
We denote the classes of pseudodifferential operators defined by the symbol
classes $S_{R}^m, S_{H}^m, S_{eH}^{m_c,m_+,m_-}$ by
$\Psi_{R}^m,\Psi_{H}^m,\Psi_{eH}^{m_c,m_+,m_-},$ respectively. As usual, the
leading term in the Taylor expansion of a symbol along the boundary can be used
to define a principal symbol. Because the defining functions for the boundary
components are only determined up to multiplication by a positive function,
invariantly, these symbols are sections of line bundles defined on the
boundary. We let $\sym{R}_m(A),$ $\sym{H}_m(A)$ denote the principal symbols
for the classical and Heisenberg pseudodifferential operators of order $m.$ In
each of these cases, the principal symbol uniquely determines a function on
the cotangent space, homogeneous with respect to the appropriate $\bbR_+$
action.  An extended Heisenberg operator has three principal symbols,
corresponding to the three boundary hypersurfaces of $\EHC{Y}.$ For an operator
with orders $(m_c,m_+,m_-)$ they are denoted by $\esym{eH}{c}_{m_c}(A),
\esym{eH}{}_{m_+}(+)(A),$ $\esym{eH}{}_{m_-}(-)(A).$ The classical symbol
$\esym{eH}{c}_{m_c}(A)$ can be represented by a radially homogeneous function
defined on $T^*Y\setminus L.$ The vector field $T$ defines a splitting to
$T^*Y$ into two half spaces
\begin{equation}
T^*_{\pm}Y=\{(y,\eta):\: \pm\eta(T)>0\}.
\end{equation}
The Heisenberg symbols, $\esym{eH}{}_{m_{\pm}}(\pm)(A)$ can be represented by
parabolically homogeneous functions defined in the half spaces of
$T^*_{\pm}Y.$ In most of our computations we use the representations of
principal symbols in terms of functions, homogeneous with respect to the
appropriate $\bbR_+$-action.

\subsection{Symbolic composition formul\ae\ }
The quantization rule leads to a different symbolic composition rule for each
class of operators. For classical operators, the composition of principal
symbols is given by pointwise multiplication: If $A\in\Psi_R^m, B\in\Psi_R^{m'},$
then $A\circ B\in\Psi_R^{m+m'}$ and
\begin{equation}
\sym{R}_{m+m'}(A\circ B)(p,\eta)=\sym{R}_m(A)(p,\eta)\sym{R}_{m'}(B)(p,\eta).
\label{7.7.1}
\end{equation}
For Heisenberg operators, the composition rule involves a nonlocal operation in
the fiber of the cotangent space. If $A\in\Psi_H^m, B\in\Psi_H^{m'},$ then
$A\circ B\in\Psi_H^{m+m'}.$ For our purposes it suffices to give a formula for
$\sym{H}_{m+m'}(A\circ B)(p,\pm 1,\eta');$ the symbol is then extended to
$T_p^*Y\setminus H^*$ as a parabolically homogeneous function of degree $m+m'.$
It extends to $H^*\setminus\{0\}$ by continuity. On the hyperplanes $\eta_0=\pm
1$ the composite symbol is given by
\begin{equation}
\begin{split}
\sym{H}_{m+m'}&(A\circ B)(p,\pm 1,\eta')=\\
&\frac{1}{\pi^{2(n-1)}}\int\limits_{\bbR^{2(n-1)}}\int\limits_{\bbR^{2(n-1)}}
a_m(\pm 1,u+\eta')b_{m'}(\pm 1,v+\eta')
e^{\pm2i\omega(u,v)}dudv,
\end{split}
\label{7.7.2}
\end{equation}
where $\omega=d\theta',$ the dual of $d\theta\restrictedto_{H^*},$ and
$$a_m(\eta)=\sym{H}_m(A)(p,\eta),\quad b_{m'}(\eta)=\sym{H}_{m'}(B)(p,\eta).$$
Note that the composed symbols in each half space are determined by the
component symbols in that half space. Indeed the symbols that vanish in a half
space define an ideal. These are called the upper and lower Hermite ideals.
The right hand side of~\eqref{7.7.2} defines two associative products on
appropriate classes of functions defined on $\bbR^{2(n-1)},$ which are
sometimes denoted by $a_m\sharp_{\pm} b_{m'}.$ An operator in $\Psi_H^m$ is
elliptic if and only if the functions $\sym{H}_{m}(p,\pm 1,\eta')$ are
invertible elements, or units, with respect to these algebra structures.

Using the representations of symbols as homogeneous functions, the compositions
for the different types of extended Heisenberg symbols are defined using the
appropriate formula above: the classical symbols are composed
using~\eqref{7.7.1} and the Heisenberg symbols are composed
using~\eqref{7.7.2}, with $+$ for $\esym{eH}{}(+)$ and $-$ for $\esym{eH}{}(-).$
These formul\ae\ and their invariance properties are established
in~\cite{EpsteinMelrose3}. 

The formula in~\eqref{7.7.2} would be of little use, but for the fact that it
has an interpretation as a composition formula for a class of operators acting
on $\bbR^{n-1}.$  The restrictions of a Heisenberg symbol to the hyperplanes
$\eta_0=\pm 1$ define \emph{isotropic} symbols on $\bbR^{2(n-1)}.$ An isotropic
symbol is a smooth function on $\bbR^{2(n-1)}$ that satisfies symbolic
estimates in all variables, i.e., $c(\eta')$ is an isotropic symbol of order
$m$ if, for every $2(n-1)$-multi-index $\alpha,$ there
is a constant $C_{\alpha}$ so that
\begin{equation}
|\pa_{\eta'}^\alpha c(\eta')|\leq C_{\alpha}(1+|\eta'|)^{m-|\alpha|}.
\end{equation}
We split $\eta'$ into two parts
\begin{equation}
x = (\eta_1,\dots,\eta_{n-1}),\quad \xi =(\eta_{n},\dots,\eta_{2(n-1)}).
\label{eqn19}
\end{equation}
If $c$ is an isotropic symbol, then we define two operators acting on
$\cS(\bbR^{n-1})$ by defining the Schwartz kernels of $c^{\pm}(X,D)$ to be
\begin{equation}
k_c^{\pm}(x,x')=\int\limits_{\bbR^{n-1}}
e^{\pm i\langle\xi,x-x'\rangle}c(\frac{x+x'}{2},\xi)d\xi.
\label{7.7.3}
\end{equation}
The utility of the formula in~\eqref{7.7.2} is a consequence of the following
proposition:
\begin{proposition} If $c_1$ and $c_2$ are two isotropic symbols,  then the
  complete symbol of $c_1^{\pm}(X,D)\circ c_2^{\pm}(X,D)$ is $c_1\sharp_{\pm} c_2,$
  with $\omega=\sum dx_j\wedge d\xi_j.$ 
  An isotropic operator $c^{\pm}(X,D):\cS(\bbR^{n-1})\to\cS(\bbR^{n-1})$ is
  invertible if and only if $c(\eta')$ is a unit with respect to the
  $\sharp_{\pm}$ product.
\end{proposition}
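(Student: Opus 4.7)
The plan is to prove the composition formula by a direct change-of-variables calculation from the kernel formula (7.7.3), and then to deduce the invertibility characterization: the easy direction follows from the composition formula itself, while the harder direction invokes the standard isotropic (Shubin/Weyl) ellipticity theory.

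For the composition formula I would begin from the iterated integral
\[
k(x, x') = \int\int\int e^{\pm i[\xi_1(x-y) + \xi_2(y-x')]} c_1\!\left(\tfrac{x+y}{2}, \xi_1\right) c_2\!\left(\tfrac{y+x'}{2}, \xi_2\right) dy\, d\xi_1\, d\xi_2
\]
representing the kernel of $c_1^\pm(X,D) \circ c_2^\pm(X,D)$. The natural substitutions $y = \tfrac{x+x'}{2} + s$, $\eta = \tfrac{\xi_1+\xi_2}{2}$, $\zeta = \xi_1-\xi_2$ transform the phase to $\pm i[\eta(x-x') - \zeta s]$, separating the $(x-x')$ dependence. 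Applying the Weyl-symbol recovery (inverse Fourier transform in $x - x'$) and making one further linear substitution in $(s, x-x')$ together with the shifted frequencies $\xi_j - \xi$ brings the remaining exponent into the symplectic form $\pm 2i\,\omega(U, V)$ for $U, V \in \bbR^{2(n-1)}$, with $\omega = \sum dx_j \wedge d\xi_j$. Reading off the result reproduces (7.7.2) exactly, so the complete symbol of the composition is $c_1 \sharp_\pm c_2$.

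For the invertibility equivalence, the easy direction is immediate from the composition formula: if $c \sharp_\pm c' = c' \sharp_\pm c = 1$ for some isotropic $c'$, then $c^\pm(X,D) c'^\pm(X,D) = c'^\pm(X,D) c^\pm(X,D) = 1^\pm(X,D) = \Id$, exhibiting $c'^\pm(X,D)$ as a two-sided inverse. Conversely, suppose $c^\pm(X,D)$ is invertible on $\cS(\bbR^{n-1})$. I would first establish that $c$ is isotropically elliptic, \emph{i.e.} $|c(\eta')| \gtrsim \langle\eta'\rangle^m$ as $|\eta'| \to \infty$; if ellipticity failed, a sequence of normalized Schwartz functions concentrated microlocally near points where $c$ decays faster than expected would be annihilated arbitrarily well by $c^\pm(X,D)$, contradicting invertibility. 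Ellipticity then yields a $\sharp_\pm$-parametrix $c_0$ in the isotropic class satisfying $c \sharp_\pm c_0 = 1 + r$ with $r \in \cS(\bbR^{2(n-1)})$ (and a symmetric identity on the right); a Neumann-series correction carried out within the calculus then promotes $c_0$ to an exact $\sharp_\pm$-inverse, showing $c$ is a unit.

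The main obstacle is the converse direction of the invertibility claim. The composition formula itself is essentially bookkeeping, a sequence of linear changes of variables in the iterated integral, and the easy direction is then formal. The substantive content is showing that the Schwartz-class remainder in the parametrix identity can be absorbed into an isotropic correction, which rests on the closedness of the Shubin/Weyl calculus under inversion of operators of the form $\Id + R$ with $R$ smoothing. This is classical but is the only step in the argument that is not a direct symbolic computation.
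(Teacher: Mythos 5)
The paper does not prove this proposition itself; the remark immediately following it attributes the result to Taylor~\cite{Taylor3}, where it appears in essentially this form, and notes its relation to earlier work of Rockland. The composition-formula part of your argument --- the chain of linear substitutions reducing the iterated Weyl kernel to the bilinear phase $\pm 2i\omega(u,v)$ --- is fine, and the easy implication (unit implies invertible) then follows formally.

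The converse has a genuine gap at the ``Neumann-series correction.'' Ellipticity produces a parametrix $c_0$ with $c\sharp_\pm c_0 = 1 + r$ and $r\in\cS(\bbR^{2(n-1)})$, but ellipticity does not give invertibility, and the Neumann series $\sum(-r)^{\sharp k}$ has no reason to converge or to produce an isotropic symbol; one cannot simply ``carry it out within the calculus.'' The model operators in this very paper illustrate the failure: by~\eqref{7.14.11}, $\sum_j C_jC_j^{*} = \ho-(n-1)$ is elliptic of order $2$ yet annihilates the vacuum state, so its Weyl symbol $|\eta'|^2-(n-1)$ is not a unit. The invertibility hypothesis must be invoked a second time, past the ellipticity step. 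Writing $T=(c^{\pm}(X,D))^{-1}$ and $R$ for the operator with symbol $r$, the identity $T = T(\Id+R) - TR = T\,c^{\pm}(X,D)\,c_0^{\pm}(X,D) - TR = c_0^{\pm}(X,D) - TR$ shows that $T$ differs from the parametrix by $TR$, which is smoothing because $R$ has Schwartz kernel and $T$ is continuous on $\cS$; hence $T$ is isotropic and its symbol is a two-sided $\sharp_\pm$-inverse of $c$. As written, your argument uses invertibility only to reach ellipticity and therefore cannot distinguish an invertible operator from a merely Fredholm one with kernel. (Your ellipticity step is also only sketched --- a Schwartz sequence concentrating at $\eta_k\to\infty$ is not bounded in the $\cS$ topology, so one should first pass, by duality and interpolation, to a lower bound in $L^2$-based isotropic Sobolev norms and then test with isotropically scaled coherent states --- but that omission is fillable, whereas the Neumann-series step is an actual error.)
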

\begin{remark} This result appears in essentially this form
  in~\cite{Taylor3}. It is related to an earlier result of Rockland.
\end{remark}

If $A$ is a Heisenberg (or extended Heisenberg operator), then the isotropic
symbols $\sym{H}_m(A)(p,\pm 1,\eta')$ ($\esym{eH}{\pm}(A)(p,\pm 1,\eta')$) can
be quantized using~\eqref{7.7.3}. We denote the corresponding operators by
$\sym{H}_m(A)(p,\pm),$ ($\esym{eH}{}(A)(p,\pm)$). We call these ``the'' model
operators defined by $A$ at $p.$ Often the point of evaluation, $p$ is fixed
and then it is omitted from the notation. The choice of splitting
in~\eqref{eqn19} cannot in general be done globally. Hence the model operators
are not, in general, globally defined. What is important to note is that the
invertibility of these operators does \emph{not} depend on the choices made to define
them. From the proposition it is clear that $A$ is elliptic in the Heisenberg
calculus if and only if the model operators are everywhere invertible. An
operator in the extended Heisenberg calculus is elliptic if and only if these
model operators are invertible and the classical principal symbol is
nonvanishing.

All these classes of operators are easily extended to act between sections of
vector bundles. When necessary we indicate this by using,
e.g. $\Psi^m_R(Y;F_1,F_2)$ to denote classical pseudodifferential operators of
order $m$ acting from sections of the bundle $F_1$ to sections of the bundle
$F_2.$ In this case the symbols take values in $P^*(\hom(F_1,F_2)),$ where
$P:T^*Y\to Y$ is the canonical projection. Unless needed for clarity, the
explicit dependence on bundles is suppressed.

\subsection{Lifting classical symbols to $\EHC{Y}$}
We close our discussion of the extended Heisenberg calculus by considering
lifts of classical symbols from $\RC{Y}$ to $\EHC{Y}.$ As above, it suffices to
consider what happens on the fiber over $p.$ This fixed point of evaluation is
suppressed to simplify the notation. Let $a(\eta)$ be a classically
homogeneous function of degree $m.$ The transition from the radial
compactification to the extended Heisenberg compactification involves blowing
up the points $(\pm\infty,0)$ in the fiber of $\RC{Y}.$ We need to understand
the behavior of $a$ near these points. Away from $\eta=0,$ we can express
$a(\eta)=r_{R}^{-m} a_0(\omega),$ where $a_0$ is a homogeneous function of
degree $0.$ Using the relations in~\eqref{7.7.4} and~\eqref{7.7.5} we see that
\begin{equation}
r_R=r_{eH}^2\sqrt{1-|\tsigma'|^4},\quad
\omega'=r_{eH}\tsigma'.
\label{7.7.7}
\end{equation}
Near $b\overline{L}$ we can use $r_R$ and $\omega'$ as coordinates, where the
function $a$ has  Taylor expansions:
\begin{equation}
a_{\pm}(r_R,\omega')=r_R^{-m}a_0(\pm\sqrt{1-|\omega'|^2},\omega')
\sim r_R^{-m}\sum\limits_{\alpha}
a^{(\alpha)}_{\pm}\omega^{\prime\alpha}.
\label{7.7.6}
\end{equation}
To find the lift, we substitute from~\eqref{7.7.7} into~\eqref{7.7.6} to obtain
\begin{equation}
a(r_{eH},\tsigma')\sim r_{eH}^{-2m}(1-|\tsigma'|^4)^{-\frac m2}\sum\limits_{\alpha}
a^{(\alpha)}_{\pm}r_{eH}^{|\alpha|}\tsigma^{\prime\alpha}.
\label{7.7.8}
\end{equation}
We summarize these computations in a proposition.
\begin{proposition} Let $a(\eta)$ be a classically homogeneous function of
  order $m$ with Taylor expansion given
in~\eqref{7.7.6}. If $a^{(\alpha)}_{\pm}$ vanish for $|\alpha|< k_{\pm},$ then
the symbol $a\in S_{R}^m$ lifts to define an element of
$S_{eH}^{m,2m-k_+,2m-k_-}.$ The Heisenberg principal symbols (as  sections of
line bundles on the boundary) are given by
\begin{equation}
{}^{eH}a_{m_{\pm}}=r^{k_{\pm}-2m}_{eH}(1-|\tsigma'|^4)^{-\frac
  m2}\sum\limits_{|\alpha|=k_{\pm}} a_{\pm}^{(\alpha)}\tsigma^{\prime\alpha}.
\label{7.7.9}
\end{equation}
\end{proposition}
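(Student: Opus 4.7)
The strategy is to simply carry out the coordinate change from the radial compactification to the extended Heisenberg compactification at the blown-up locus $b\overline{L}$, and read off orders and leading terms. All the ingredients are already assembled in equations~\eqref{7.7.4}--\eqref{7.7.8}; the proposition is essentially the bookkeeping of that computation.

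First I would fix the Heisenberg face (say the upper one, the lower being identical) and work in the coordinates $(r_{eH},\tsigma')$ near $b\overline{L}\cap\{\eta_0>0\}$. Writing $a$ in its radial form $a=r_R^{-m}a_0(\omega)$ and expanding $a_0$ in a Taylor series about $\omega=(1,0)$ (via $\omega_0=\sqrt{1-|\omega'|^2}$) as in~\eqref{7.7.6}, the hypothesis $a^{(\alpha)}_+=0$ for $|\alpha|<k_+$ says that the Taylor expansion in $\omega'$ starts at order $k_+$. Substituting the blow-up formulas $r_R=r_{eH}^2\sqrt{1-|\tsigma'|^4}$ and $\omega'=r_{eH}\tsigma'$ into this expansion gives exactly~\eqref{7.7.8}:
\begin{equation*}
a(r_{eH},\tsigma')\sim r_{eH}^{-2m}(1-|\tsigma'|^4)^{-m/2}\sum_{|\alpha|\geq k_+}a^{(\alpha)}_+r_{eH}^{|\alpha|}\tsigma^{\prime\alpha}.
\end{equation*}

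The next step is to identify orders and principal symbols. Each monomial $\omega^{\prime\alpha}$ picks up a factor $r_{eH}^{|\alpha|}$ from the substitution, so the overall power of $r_{eH}$ in any term is $|\alpha|-2m$. The most negative power that actually appears is $k_+-2m$, hence the lift lies in $r_{eH}^{k_+-2m}\CI$ near the upper Heisenberg face, i.e. it defines a symbol of extended Heisenberg order $2m-k_+$ there. The analogous computation at the lower face yields order $2m-k_-$. Away from $b\overline{L}$ the two compactifications agree (the blow-up is trivial there), so the lift is smooth on the classical face and has classical order $m$. Combining the three orders gives membership in $S^{m,2m-k_+,2m-k_-}_{eH}$.

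Finally, to read off the Heisenberg principal symbols I would divide by $r_{eH}^{k_\pm-2m}$ and restrict to $r_{eH}=0$. Only the terms with $|\alpha|=k_\pm$ survive, the factor $(1-|\tsigma'|^4)^{-m/2}$ is smooth on the interior of the face (where $|\tsigma'|^4<1$), and one obtains~\eqref{7.7.9}. No step is genuinely hard; the only place that requires a little care is the observation that $(1-|\tsigma'|^4)^{-m/2}$, though singular at $|\tsigma'|=1$, extends smoothly into the \emph{interior} of the Heisenberg face, which is all that is needed to interpret~\eqref{7.7.9} as a section of the appropriate line bundle on that face. The invariance of the formula under the ambiguity in the choice of defining functions $r_{eH}$ follows from the corresponding invariance statement proved in~\cite{EpsteinMelrose3}.
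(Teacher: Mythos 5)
Your proof is correct and follows essentially the same route as the paper: the paper derives~\eqref{7.7.7} and~\eqref{7.7.8} in the preceding paragraph and then states the proposition as a summary of that computation, with no further argument given. Your write-up simply makes that bookkeeping explicit, including the (appropriate) observations about the classical face and the smoothness of $(1-|\tsigma'|^4)^{-m/2}$ on the interior of the Heisenberg face.
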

\begin{remark} From this proposition it is clear that the Heisenberg principal
  symbol of the lift of a classical pseudodifferential operator may not be
  defined by its classical principal symbol. It may depend on lower order terms in
  the classical symbol.
\end{remark}

To compute with the lifted symbols it is more useful to represent them as
Heisenberg homogeneous functions. In the computations that follow we only
encounter symbols of the form
\begin{equation}
a(\eta)=\frac{h(\eta)}{|\eta|^k},
\end{equation}
with $h(\eta)$ a polynomial of degree $l.$ In the fiber over $p,$ the
coordinate $\eta_0$ is parabolically homogeneous of degree $2$ whereas the
coordinates in $\eta'$ are parabolically homogeneous of degree $1.$ Using this
observation, it is straightforward to find the representations, as
parabolically homogeneous functions, of the Heisenberg principal symbols
defined by $a(\eta).$ First observe that $|\eta'|^2/\eta_0$ is parabolically
homogeneous of degree $0,$ and therefore, in terms of the \emph{parabolic
homogeneities} we have the expansion
\begin{equation}
\begin{split}
\frac{1}{|\eta|^k}&=\frac{1}{|\eta_0|^k}\frac{1}
{\left(1+\frac{|\eta'|^2}{\eta_0^2}\right)^k}\\
&\sim\frac{1}{|\eta_0|^k}\left[1+\sum\limits_{j=1}^\infty
  \frac{c_{k,j}}{|\eta_0|^j}\left(\frac{|\eta'|^2}{|\eta_0|}\right)^j
\right].
\end{split}
\label{7.8.1}
\end{equation}
Thus $|\eta|^{-k}$ lifts to define a symbol in $S^{-k,-2k,-2k}_{eH}.$ Note also
that  only even parabolic degrees appear in this expansion. 

We complete the analysis by expressing $h(\eta)$ as a polynomial in $\eta_0:$
\begin{equation}
h(\eta)=\sum_{j=0}^{l'}\eta_0^j h_j(\eta'),
\label{7.8.2}
\end{equation}
here $h_j$ is a radially homogeneous polynomial of degree $l-j,$ and $l'\leq
l.$ We assume that $h_{l'}\neq 0.$ Evidently $\eta_0^{l'}h_{l'}(\eta')$ is the
term with highest parabolic order, and therefore $h$ lifts to define a
parabolic symbol of order $l'+l.$ Combining these calculations gives the
following result:
\begin{proposition}\label{prop33}
If $h(\eta)$ is a radially homogeneous polynomial of degree $l$ with expansion
given by~\eqref{7.8.2}, then $h(\eta)|\eta|^{-k}$ lifts to define an element of
$S^{l-k,l'+l-2k,l'+l-2k}_{eH}.$ As parabolically homogeneous functions, the
Heisenberg principal symbols are
\begin{equation}
(\pm 1)^{l'}|\eta_0|^{l'-k}h_{l'}(\eta').
\label{7.8.3}
\end{equation}
\end{proposition}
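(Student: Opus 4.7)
The proof will be an essentially computational combination of the two expansions~\eqref{7.8.1} and~\eqref{7.8.2}; the strategy is to determine the classical order of $h(\eta)|\eta|^{-k}$ by elementary degree counting and to identify the leading parabolically homogeneous term of its lift by multiplying the leading parabolic parts of the two factors.

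First I would record the parabolic degrees of the pieces of~\eqref{7.8.2}. Since $\eta_0$ has parabolic homogeneity $2$ and the components of $\eta'$ have parabolic homogeneity $1$, the summand $\eta_0^j h_j(\eta')$ has parabolic degree $2j + (l-j) = l+j$. Because $h_{l'} \neq 0$ and $j$ ranges over $0,\dots,l'$, the summand $\eta_0^{l'} h_{l'}(\eta')$ is unambiguously the term of highest parabolic order, namely $l+l'$, while the remaining summands have strictly lower parabolic order. Meanwhile the expansion~\eqref{7.8.1} exhibits $|\eta|^{-k}$ as $|\eta_0|^{-k}$ plus terms of strictly lower parabolic order, and the previous proposition has already shown $|\eta|^{-k}\in S^{-k,-2k,-2k}_{eH}$.

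Next I would multiply the two expansions term-by-term. The classical radial degree is additive, giving $l-k$, which identifies the first symbolic order. For the parabolic orders on the upper and lower Heisenberg faces, I would observe that the product of the highest-parabolic-order parts is
\begin{equation*}
\eta_0^{l'} h_{l'}(\eta') \cdot |\eta_0|^{-k} = (\operatorname{sign}\eta_0)^{l'}\,|\eta_0|^{l'-k}\,h_{l'}(\eta'),
\end{equation*}
which is parabolically homogeneous of degree $l+l'-2k$. All other cross-terms have strictly smaller parabolic order, either because they arise from a lower-parabolic-degree summand in $h$, or because they arise from a tail term in~\eqref{7.8.1}. This shows both the claimed membership $h(\eta)|\eta|^{-k}\in S^{l-k,\, l'+l-2k,\, l'+l-2k}_{eH}$ and that the leading parabolic part is the displayed expression.

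Finally I would restrict to the half-spaces $T^*_{\pm}Y$ to read off the principal symbols on the upper and lower Heisenberg faces: on $T^*_{+}Y$ one has $\operatorname{sign}\eta_0=+1$, yielding $(+1)^{l'}|\eta_0|^{l'-k}h_{l'}(\eta')$, and on $T^*_{-}Y$ one has $\operatorname{sign}\eta_0=-1$, yielding $(-1)^{l'}|\eta_0|^{l'-k}h_{l'}(\eta')$. This is exactly~\eqref{7.8.3}. There is no real obstacle here; the only place that requires a little care is the bookkeeping for the parabolic orders of the tail terms in~\eqref{7.8.1} combined with the lower-order summands of~\eqref{7.8.2}, so as to be sure that no such cross-term can equal or exceed the leading parabolic order $l+l'-2k$. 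Since the parabolic orders of the tails in~\eqref{7.8.1} decrease by even amounts ($-2k-2j$, $j\ge 1$) and the summands $\eta_0^j h_j$ with $j<l'$ have parabolic order at most $l+l'-1$, this verification is immediate.
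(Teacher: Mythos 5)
Your proposal is correct and takes essentially the same route as the paper: read off the parabolic degrees from~\eqref{7.8.1} and~\eqref{7.8.2}, observe that $\eta_0^{l'}h_{l'}(\eta')\cdot|\eta_0|^{-k}$ is the unique cross-term of highest parabolic order $l+l'-2k$, and identify it (with the sign $(\pm 1)^{l'}$ according to the half-space) as the Heisenberg principal symbol. The paper's version is terser and additionally rewrites~\eqref{7.8.3} as $(\pm 1)^{l'}[\sqrt{|\eta_0|}]^{l+l'-2k}h_{l'}(\eta'/\sqrt{|\eta_0|})$ to exhibit it explicitly as the leading Taylor coefficient in the boundary defining function $|\eta_0|^{-1/2}$, but this is only a change of presentation, not of substance.
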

\begin{proof} The statement about the orders of the lifted symbols follows
  immediately from~\eqref{7.8.1} and ~\eqref{7.8.2}. We observe that
  $|\eta_0|^{-\frac 12}$ is a defining function for the upper and lower
  Heisenberg faces, and $\eta'/\sqrt{|\eta_0|}$ is parabolically homogeneous of
  degree $0.$ As noted, the term in the expansion of $h(\eta)|\eta|^{-k}$ with highest
  parabolic degree is that given in~\eqref{7.8.3}. We can express it as the leading term
  in the Taylor series of the lifted symbol along the Heisenberg face as:
\begin{equation}
(\pm 1)^{l'}|\eta_0|^{l'-k}h_{l'}(\eta')=(\pm 1)^{l'}[\sqrt{|\eta_0|}]^{l+l'-2k}
h_{l'}\left(\frac{\eta'}{\sqrt{|\eta_0|}}\right).
\label{7.8.4}
\end{equation}
\end{proof}
Note that the terms in the parabolic expansions of the lift of
$h(\eta)|\eta|^{-k}$ all have the same parity.

\section{The symbol of the Dirac Operator and its inverse}\label{ss.2}
Let $X$ be a manifold with boundary, $Y$ and suppose that $X$ has a
$\spnc$-structure and a compatible metric. Let $\eth_E$ denote the twisted
$\spnc$-Dirac operator and $\eth^{\eo}_E$ its ``even'' and ``odd'' parts. Let
$\rho$ be a defining function for $bX.$ As noted above, $E\to X$ is a complex
vector bundle with compatible almost complex structure along $bX.$ The manifold
$X$ can be included into a larger manifold $\tX$ in such a way that its
$\spnc$-structure and Dirac operator extend smoothly to $\tX$ and such that the
operators $\eth^{\eo}_E$ are invertible, see Chapter 9 of~\cite{BBW}. Let
$Q^{\eo}_E$ denote the inverses of $\eth^{\eo}_E.$ These are classical
pseudodifferential operators of order $-1.$ The existence of an exact inverse
just simplifies the presentation a little, a parametrix suffices for our
computations.

Let $r$ denote the operation of restriction of a section of
$\Spn^{\eo}\otimes E,$ defined on $\tX$ to $X,$ and $\gamma_\epsilon$ the operation of
restriction of a smooth section of $\Spn^{\eo}\otimes E$ to
$Y_{\epsilon}=\{\rho^{-1}(\epsilon)\}.$ We use the convention used
in~\cite{Epstein4}: if $X$ is strictly pseudoconvex then $\rho<0$ on $X$ and if
$X$ is strictly pseudoconcave then $\rho>0$ on $X.$ We define the 
operator
\begin{equation}
\tK^{\eo}_E\overset{d}{=} r Q^{\eo}_E\gamma^*_0:\CI(Y;\Spn^{\ooee}\otimes E\restrictedto_{Y})
\longrightarrow \CI(X;\Spn^{\eo}\otimes E).
\end{equation}
Here $\gamma^*_0$ is the formal adjoint of $\gamma_0.$ We recall that, along
$Y$ the symbol $\sigma_1(\eth^{\eo}_E,d\rho)$ defines an isomorphism
\begin{equation}
\sigma_1(\eth^{\eo},d\rho):\Spn^{\eo}\otimes E\restrictedto_{Y}\longrightarrow
\Spn^{\ooee}\otimes E\restrictedto_{Y}.
\end{equation}
Composing, we get the usual Poisson operators
\begin{equation}
\cK^{\eo}_{E\pm}=\frac{\mp}{i\sqrt{2}}\tK^{\eo}_E\circ\sigma_1(\eth^{\eo}_E,d\rho):
\CI(Y;\Spn^{\eo}\otimes E\restrictedto_{Y})
\longrightarrow \CI(X;\Spn^{\eo}\otimes E),
\end{equation}
which map sections of $\Spn^{\eo}\otimes E\restrictedto_{Y}$ into the nullspace of
$\eth^{\eo}_E.$ The factor $\mp/\sqrt{2}$ is inserted because $\rho<0$ on $X,$ if
$X$ is strictly pseudoconvex, and $\|d\rho\|=\sqrt{2}.$

The Calderon projectors are defined by
\begin{equation}
\cP^{\eo}_{E\pm} s\overset{d}{=}\lim_{\mp\epsilon\to 0^+}
\gamma_{\epsilon}\cK^{\eo}_{E\pm}s\text{ for }s\in\CI(Y;\Spn^{\eo}\otimes E\restrictedto_{Y}).
\end{equation}
The fundamental result of Seeley is that $\cP^{\eo}_{E\pm}$ are classical
pseudodifferential operators of order $0.$ The ranges of these operators are the
boundary values of elements of $\Ker\eth^{\eo}_{E\pm}.$ Seeley gave a
prescription for computing the symbols of these operators using contour
integrals, which we do not repeat, as we shall be computing these symbols in
detail in the following sections. See~\cite{Seeley0} 

\begin{remark}[{\bf Notational remark}] The notation $\cP_{\pm}$ used in
  this paper does \emph{not} follow the usual convention in this
  field. Usually $\cP_{\pm}$ would refer to the Calderon projectors defined
  by approaching a hypersurface in a single invertible double from either
  side. In this case one would have the identity $\cP_+ +\cP_-=\Id.$ In our usage,
  $\cP_+$ refers to the projector for the pseudoconvex side and $\cP_{-}$ the
  projector for the pseudoconcave side. With our convention it is not usually
  true that $\cP_++\cP_-=\Id.$
\end{remark}

As we need to compute the symbol of $Q^{\eo}_E$ is some detail, we now consider
how to find it. We start with the formally self adjoint operators
$D^{\eo}_E=\eth^{\eo}_E\eth^{\ooee}_E.$ If $Q_{E(2)}^{\eo}$ is the inverse of $D^{\eo}_E$
then
\begin{equation}
Q^{\eo}_E=\eth^{\ooee}_E Q_{E(2)}^{\eo}.
\end{equation}
In carefully chosen coordinates, it is a simple matter to get a precise
description of symbols of $\eth^{\eo}_E$ and $Q_{E(2)}^{\eo}$ and thereby the
symbols of $Q^{\eo}_E.$ Throughout this and the following section we
repeatedly use the fact that the principal symbol of a classical, Heisenberg or
extended Heisenberg pseudodifferential operator is well defined as a
(collection of) homogeneous functions on the cotangent bundle. To make these
computations tractable it is crucial to carefully normalize the coordinates.
At the boundary, there is a complex interplay between the K\"ahler geometry of
$X$ and the CR-geometry of $bX.$ For this reason the initial computations are
done in a K\"ahler coordinate system about a fixed point $p\in bX.$ In order to
compute the symbol of the Calderon projector we need to switch to a boundary
adapted coordinate system. Finally, to analyze the Heisenberg symbols of
$\cT^{\eo}_{E\pm}$ we need to use Darboux coordinates at $p.$ Since the boundary
is assumed to be strictly pseudoconvex (pseudoconcave), the relevant geometry
is the same at every boundary point, hence there is no loss in generality in doing
the computations at a fixed point.

We now suppose that, in a neighborhood of the boundary, $X$ is a complex
manifold and the K\"ahler form of the metric is given by
$\omega_g=-i\pa\dbar\rho.$ We are implicitly assuming that $bX$ is either
strictly pseudoconvex or strictly pseudoconcave. Our convention on the sign of
$\rho$ implies that, in either case, $\omega_g$ is positive definite near to
$bX.$ As noted above it is really sufficient to assume that $X$ has an almost
complex structure along $bX$ that is integrable to infinite order, however, to
simplify the exposition we assume that there is a genuine complex structure in
a neighborhood of $bX.$ We fix an Hermitian metric $h$ on sections of $E.$

Fix a point $p$ on the boundary of $X$ and let $(z_1,\dots,z_n)$ denote
K\"ahler coordinates centered at $p.$ This means that
\begin{enumerate}
\item $p\leftrightarrow (0,\dots,0)$
\item The Hermitian metric tensor $g_{i\bj}$ in these coordinates satisfies
\begin{equation}
g_{i\bj}=\frac{1}{2}\delta_{i\bj}+O(|z|^2).
\label{7.2.1}
\end{equation}
\end{enumerate}
As a consequence of Lemma 2.3 in~\cite{wells}, we can choose a local
holomorphic frame $(e_1(e),\dots,e_r(z))$ for $E$ such that
\begin{equation}
h(e_j(z),e_k(z))=\delta_{jk}+O(|z|^2).
\label{7.26.1}
\end{equation}
Equation~\eqref{7.2.1} implies that, after a linear change of coordinates,
  we can arrange to have
\begin{equation}
\rho(z)=-2\Re z_1+|z|^2+\Re (bz,z)+O(|z|^3).
\label{7.2.2}
\end{equation}
In this equation $b$ is an $n\times n$ complex matrix and
\begin{equation}
(w,z)=\sum_{j=1}^n w_jz_j.
\end{equation}
We use the conventions for K\"ahler geometry laid out in Section IX.5
of~\cite{KobayashiNomizu2}.  The underlying real coordinates are denoted by
$(x_1,\dots,x_{2n}),$ with $z_j=x_j+ix_{j+n},$ and $(\xi_1,\dots,\xi_{2n})$
denote the linear coordinates defined on the fibers of $T^*X$ by the local
coframe field $\{dx_1,\dots,dx_{2n}\}.$

In this coordinate system we now compute the symbols of $\eth_E=\dbar_E+\dbar^*_E,$
$D^{\eo}_E,$ $Q^{\eo}_{E(2)}$ and $Q^{\eo}_E.$ For these calculations the following
notation proves very useful: a term which is a symbol of order at most $k$ vanishing,
at $p,$ to order $l$ is denoted by $\sO_{k}(|z|^l).$ As we work with a variety
of operator calculi, it is sometimes necessary to be specific as to the sense
in which the order should be taken. The notation $\sO^{C}_j$ refers to terms of
order at most $j$ in the sense of the class $C.$ If $C=eH$ we sometimes use an
appropriate multi-order. If no symbol class is specified, then the order  is with
respect to the classical, radial scaling. If no rate of vanishing is specified,
it should be understood to be $O(1).$

Recall that, with
respect to the standard Euclidean metric 
\begin{equation}
\langle\pa_{\bz_j},\pa_{\bz_k}\rangle_{\eucl}=\frac{1}{2}\text{ and }
\langle d\bz_j,d\bz_k\rangle_{\eucl}=2.
\end{equation}
Orthonormal bases for $T^{1,0}X$ and $\Lambda^{1,0}X,$ near to $p,$ take the form
\begin{equation}
Z_j=\sqrt{2}(\pa_{z_j}+e_{jk}(z)\pa_{z_k}),\quad
\omega_j=\frac{1}{\sqrt{2}}(dz_j+f_{jk}(z)dz_k),
\end{equation}
with $e_{jk}$ and $f_{jk}$ both $O(|z|^2).$ With respect to the
trivialization of $E$ given above, the symbol of $\eth_E$ is a
polynomial in $\xi$ of the form
\begin{equation}
\sigma(\eth_E)(z,\xi)=d(z,\xi)=d_1(z,\xi)+d_0(z),
\end{equation}
with $d_j(z,\cdot)$ a polynomial of degree $j$ such that
\begin{equation}
d_1(z,\xi)=d_1(0,\xi)+\sO_1(|z|^2),\quad
d_0(z)=\sO_0(|z|).
\label{7.2.3}
\end{equation}
The linear polynomial $d_1(0,\xi)$ is the symbol of $\dbar_E+\dbar^*_E$ on $\bbC^n$
with respect to the flat metric. These formul\ae\ imply that
\begin{equation}
\sigma(D^{\eo}_E)=\Delta_2(z,\xi)+\Delta_1(z,\xi)+\Delta_0(z,\xi),
\end{equation}
with $\Delta_j(z,\cdot)$ a polynomial of degree $j$ such that
\begin{equation}
\begin{split}
\Delta_2(z,\xi)=\Delta_2&(0,\xi)+\sO_2(|z|^2)\\
\Delta_1(z,\xi)=\sO_1(|z|), &\quad \Delta_0(z,\xi)=\sO_0(1).
\end{split}
\label{7.2.4}
\end{equation}
As the metric is K\"ahler, $D^{\eo}_E$ is half the Riemannian Laplacian, hence
the principal symbol at zero is
\begin{equation}
\Delta_2(0,\xi)=\frac{1}{2}|\xi|^2\otimes\Id.
\end{equation}
Here $\Id$ is the identity homomorphism on the appropriate bundle. As it has no
significant effect on our subsequence computations, or results, we heretofore
suppress the explicit dependence on the bundle $E,$ except where necessary.

The symbol $\sigma(Q^{\eo}_{(2)})=\tq=\tq_{-2}+\tq_{-3}+\dots$ is determined by the usual
symbolic relations:
\begin{equation}
\begin{split}
\tq_{-2}&=\Delta_2^{-1}\\
\tq_{-3}=-\tq_{-2}[\Delta_1\tq_{-2}&+iD_{\xi_j}\Delta_2D_{x_j}\tq_{-2}],
\end{split}
\label{7.2.5}
\end{equation}
\emph{etc}. Using the expressions in~\eqref{7.2.4} we obtain that
\begin{equation}
\begin{split}
\tq_{-2}&=\frac{2}{|\xi|^2}(\Id+\sO_0(|z|^2))\\
\tq_{-3}&=\frac{\sO_1(|z|)}{|\xi|^4},
\end{split}
\label{7.2.6}
\end{equation}
and generally for $k\geq 2$ we have
\begin{equation}
\begin{split}
\tq_{-2k}&=\sum_{j=0}^{l_k}\frac{\sO_{2j}(1)}{|\xi|^{2(k+j)}}\\
\tq_{-(2k+1)}&=\sum_{j=0}^{l'_k}\frac{\sO_{1+2j}(1)}{|\xi|^{2(k+j+1)}}.
\end{split}
\label{7.2.7}
\end{equation}
The exact form of denominator is important in the computation of the symbol of
Calderon projectors.  The numerators  are polynomials in $\xi$ of the indicated
degrees. 

Set
\begin{equation}
\sigma(Q^{\eo})=q=q_{-1}+q_{-2}+\dots
\end{equation}
As it has no bearing on the calculation, for the moment we do not keep track of
whether to use the even or odd part of the operator.  Note that the symbol of
$Q^{\eo}_{(2)}$ is the same for both parities. From the standard composition
formula, we obtain that
\begin{equation}
\begin{split}
q_{-1}&=d_1\tq_{-2}\\
q_{-2}&=d_1\tq_{-3}+d_0\tq_{-2}+i\sum\limits_{j=1}^{2n} D_{\xi_j}d_1 D_{x_j}\tq_{-2}.
\end{split}
\label{7.2.8}
\end{equation}
Generally, we have
\begin{equation}
\begin{split}
q_{-(2+k)}(x,\xi)&=d_0(x)\tq_{-(2+k)}(x,\xi)+d_1(x,\xi)\tq_{-(3+k)}(x,\xi)+\\
&i\sum_{|\alpha|=1} D^{\alpha}_{\xi}
d_1(x,\xi)D^{\alpha}_{x}\tq_{-(2+k)}(x,\xi).
\end{split}
\label{7.6.5}
\end{equation}
Combining~\eqref{7.2.3} and~\eqref{7.2.6} shows that
\begin{equation}
q_{-2}=\sO_{-2}(|z|).
\label{7.6.4}
\end{equation}
Using the expressions in~\eqref{7.2.7} we see that for $k\geq 2$ we have
\begin{equation}
q_{-2k}=\sum_{j=0}^{l_k}\frac{\sO_{2j}(1)}{|\xi|^{2(k+j)}},\quad
q_{-(2k-1)}=\sum_{j=0}^{l_k'}\frac{\sO_{2j+1}(1)}{|\xi|^{2(k+j)}}.
\label{7.6.6}
\end{equation}

In order to compute the symbol of the Calderon projector, we  introduce
boundary adapted coordinates, $(t,x_2,\dots, x_{2n})$ where
\begin{equation}
t=-\frac{1}{2}\rho(z)=x_1+O(|x|^2).
\end{equation}
Note that $t$ is positive on a pseudoconvex manifold and negative on a
pseudoconcave manifold.

We need to use the change of coordinates formula to
express the symbol in the new variables. From~\cite{Hormander3} we obtain the
following prescription: Let $w=\phi(x)$ be a diffeomorphism and $a(x,\xi)$ the
symbol of a classical pseudodifferential operator $A$. Let $(w,\eta)$ be linear
coordinates in the cotangent space, then $a_\phi(w,\eta),$ the symbol of $A$
in the new coordinates, is given by
\begin{equation}
a_{\phi}(\phi(x),\eta)\sim
\sum_{k=0}^{\infty}\sum_{\alpha\in\cI_k}
\frac{(-i)^k\pa_\xi^\alpha a(x,d\phi(x)^t\eta)\pa_{\px}^\alpha e^{i\langle
    \Phi_x(\px),\eta\rangle}}
{\alpha!}\bigg|_{x=\px},
\label{7.6.1}
\end{equation}
where
\begin{equation}
\Phi_x(\px)=\phi(\px)-\phi(x)-d\phi(x)(\px-x).
\label{7.6.2}
\end{equation}
Here $\cI_k$ are multi-indices of length $k.$ Our symbols are matrix valued,
e.g. $q_{-2}$ is really $(q_{-2})_{pq}.$ As the change of variables applies
component by component, we suppress these indices in the computations that
follow.

In the case at hand, we are interested in evaluating this expression at $z=x=0,$
where we have $d\phi(0)=\Id$ and
$$\Phi_0(\px)=(-\frac{1}{2}[|\pz|^2+\Re(b\pz,\pz)+O(|\pz|^3)],\dots,0).$$ Note
also that, in~\eqref{7.6.1}, the symbol $a$ is only differentiated in the fiber
variables and, therefore, any term of the symbol that vanishes at $z=0,$ in the
K\"ahler coordinates, does not contribute to the symbol at $0$ in the boundary
adapted coordinates. Of particular importance is the fact that the term
$q_{-2}$ vanishes at $z=0$ and therefore does not contribute to the final
result. Indeed we shall see that only the principal symbol $q_{-1}$ contributes
to the Heisenberg principal symbol along the positive (or negative) contact
direction.

The $k=1$ term from~\eqref{7.6.1} vanishes, the $k=2$ term is given by
\begin{equation}
-\frac{i\xi_1}{2}\Tr[\pa^2_{\xi_j\xi_k} q(0,\xi)\pa^2_{x_jx_k}\phi(0)].
\end{equation}
For $k>2,$ the terms have the form
\begin{equation}
\sum_{\alpha\in\cI_k} \pa_{\xi}^{\alpha} q(0,\xi) p^{\alpha}(\xi_1).
\end{equation}
Here $p^{\alpha}$ is a polynomial of degree at most
$\lfloor\frac{|\alpha|}{2}\rfloor.$ As we shall see, the terms for $k>2$ do not
contribute to the final result.

To compute the $k=2$ term we need to compute the Hessians of $q_{-1}$ and
$\phi(x)$ at $x=0.$ 
We define the $2n\times 2n$ matrix $B$ so that
\begin{equation}
\Re(bz,z)=\langle B x,x\rangle;
\end{equation}
if $b=b^0+ib^1,$ then
\begin{equation}
B=\left(\begin{matrix} b^0 & -b^1\\
-b^1 & -b^0\end{matrix}\right).
\end{equation}
With these definitions we see that
\begin{equation}
\pa^2_{x_jx_k}\phi(0)=-(\Id+B).
\end{equation}

We further simplify the notation by letting $d_1(\xi)\overset{d}{=}d_1(0,\xi),$
then
\begin{equation}
q_{-1}(0,\xi)=\frac{2d_1(\xi)}{|\xi|^2}.
\end{equation}
Differentiating gives
\begin{equation}
\frac{\pa q_{-1}}{\pa\xi_j}=\frac{2\pa_{\xi_j} d_1}{|\xi|^2}-
\frac{2\xi_j d_1}{|\xi|^4}
\end{equation}
and
\begin{equation}
\frac{\pa^2q_{-1}}{\pa\xi_k\pa\xi_j}=
-4\frac{d_1\Id+\xi\otimes\pa_\xi d_1^t+\pa_\xi d_1\otimes\xi^t}{|\xi|^4}+
16d_1\frac{\xi\otimes\xi^t}{|\xi|^6}.
\end{equation}
Here $\xi$ and $\pa_\xi d_1$ are regarded as column vectors.

We now compute the principal part of the $k=2$ term
\begin{equation}
\begin{split}
q_{-2}^{c}(\xi)=&i\xi_1\Tr\left[(\Id+B)\left(-2\frac{d_1\Id+\xi\otimes\pa_\xi d_1^t+\pa_\xi
d_1\otimes\xi^t}{|\xi|^4}+ 8d_1\frac{\xi\otimes\xi^t}{|\xi|^6}\right)\right]\\
&= 4i\xi_1\left[(1-n)\frac{d_1}{|\xi|^4}+\frac{2d_1\langle
    B\xi,\xi\rangle}{|\xi|^6}-
\frac{\langle B\xi,\pa_\xi d_1\rangle}{|\xi|^4}\right].
\end{split}
\label{7.6.8}
\end{equation}

Because $q_{-2}$ vanishes at $0$ and because the order of a symbol is preserved
under a change of variables we 
see that  the symbol of $Q^{\eo}$ at $p$ is therefore
\begin{equation}
q(0,\xi)=\frac{2d_1(\xi)}{|\xi|^2}+q_{-2}^c(\xi)+\sO_{-3}(1).
\label{7.9.7}
\end{equation}
For the computation of the Calderon projector it is useful to be a little more
precise about the error term. The terms of highest symbolic order are multiples
of terms of the form $\xi^{k}_1\pa_{\xi}^{\alpha}q_{-j}$ where $|\alpha|=2k.$ We
describe, in a proposition, the types of terms that arise as error terms
in~\eqref{7.9.7}
\begin{proposition} The $\sO_{-3}(1)$-term in~\eqref{7.9.7} is a sum of terms
  of the form appearing in~\eqref{7.6.6} along with terms of the forms 
\begin{equation}
\begin{split}
&\frac{\xi_1^lh_{2m}(\xi)}{|\xi|^{2(k+l'+m)}} \text{ with either }
k=1\text{ and }l\geq 2\text{ or }k\geq 2\\
&\frac{\xi_1^lh_{2m+1}(\xi)}{|\xi|^{2(k+l'+m)}} \text{ with }
k\geq 2.
\end{split}
\label{7.9.8}
\end{equation}
Here $l'\geq l,$ $m$ is a nonnegative integer and $h_j(\xi)$ is a radially homogeneous
polynomial of degree $j.$ 
\end{proposition}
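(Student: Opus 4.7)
The statement is a careful inventory of the change-of-variables formula (7.6.1) evaluated at $z = 0$, combined with the symbol structure (7.6.6) and the vanishing (7.6.4).

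First I would exploit the special form of $\phi$: since $\phi(x) = (t, x_2, \dots, x_{2n})$ with $t = x_1 + O(|x|^2)$, the remainder $\Phi_0(\tilde x)$ has a nonzero component only in its first coordinate and vanishes to second order at $0$. Hence $\pa_{\tilde x}^\alpha e^{i\langle \Phi_0(\tilde x),\eta\rangle}\big|_{\tilde x = 0}$ is a polynomial in $\xi_1$ alone of degree at most $\lfloor |\alpha|/2 \rfloor$, and the $|\alpha| = 1$ contribution vanishes. The $k = 0$ term of (7.6.1) returns $q(0,\xi) = q_{-1}(0,\xi) + \sum_{j\geq 3} q_{-j}(0,\xi)$, since $q_{-2}(0,\xi) = 0$ by (7.6.4); the first summand is explicitly $2d_1(\xi)/|\xi|^2$ in (7.9.7), and the sum for $j \geq 3$ is already of the shape in (7.6.6), accounting for the first family in the announced classification. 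The $k = 1$ term vanishes; the $k = 2$ principal contribution from $q_{-1}$ is $q_{-2}^c$; and all remaining contributions are of the form $(\pa_\xi^\alpha q_{-j})(0,\xi)\,p^\alpha(\xi_1)$ with either $|\alpha| = 2,\ j \geq 3$ or $|\alpha| \geq 3,\ j \geq 1$.

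Next I would analyze how $\pa_\xi^\alpha$ acts on an individual summand $h_r(\xi)/|\xi|^{2N}$ with $h_r$ radially homogeneous of degree $r$. The identity
\begin{equation*}
\pa_{\xi_i}\!\left(\frac{h_r}{|\xi|^{2N}}\right) = \frac{\pa_{\xi_i} h_r}{|\xi|^{2N}} - \frac{2N\, h_r\, \xi_i}{|\xi|^{2(N+1)}}
\end{equation*}
shows that iterating $|\alpha|$ times yields sums of terms $h_{r'}(\xi)/|\xi|^{2N'}$ whose parity satisfies $r' \equiv r + |\alpha| \pmod 2$ and whose denominator exponent $N' - N$ equals the number of derivatives that struck a $|\xi|^{-2}$-factor; each such derivative also produces an additional $\xi_j$ in the numerator. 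Multiplying by $p^\alpha(\xi_1)$ of degree at most $\lfloor |\alpha|/2 \rfloor$ then produces terms of the shape $\xi_1^l h_{r'}(\xi)/|\xi|^{2N'}$.

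To conclude, I would match these terms to the two announced forms. The essential observation is the freedom to absorb any number of the explicit $\xi_1$'s into the polynomial $h_{r'}$ (turning $h_{r'}$ into $h_{r' + s}$ while reducing $l$ to $l - s$), adjusting parities without changing the rational function. Using this freedom together with the parity bookkeeping from (7.6.6) — even-degree numerators for $q_{-2k'}$, odd-degree numerators for $q_{-(2k'-1)}$ — each residual term lands in $\xi_1^l h_{2m}/|\xi|^{2(k + l' + m)}$ when the total numerator parity is even and in $\xi_1^l h_{2m+1}/|\xi|^{2(k + l' + m)}$ when it is odd. The inequality $l' \geq l$ records that each extra $\xi_j$ in the numerator (produced when a $\pa_\xi$ hits the denominator) is accompanied by an extra $|\xi|^{-2}$. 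The excluded even case $k = 1$, $l \in \{0,1\}$ reflects the fact that the only source of a bare $|\xi|^{-2}$ denominator is an un-differentiated copy of $q_{-1}$, which has been separately extracted into (7.9.7); once any $\pa_\xi$ has acted, the denominator inflates. The main obstacle is purely clerical: verifying, case by case, that the reparametrization (choosing how many $\xi_1$'s to absorb into $h$) is actually enough to realize both the $l' \geq l$ constraint and the exclusion of $k = 1$, $l \leq 1$ in the even case.
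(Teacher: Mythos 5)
Your proof is correct and follows the same route as the paper, which disposes of the statement in a single sentence (``immediate consequence of (7.6.6), (7.6.1) and the quadratic vanishing of $\Phi_0$''); you have simply filled in the bookkeeping. The one step in your reconstruction that is genuinely needed and not spelled out in the paper is the reparametrization freedom, that $\xi_1^l h_r(\xi)$ may be rewritten as $\xi_1^{l-s}(\xi_1^s h_r)$: indeed, the natural parametrization of the terms produced from $q_{-1}$ under the change of variables has odd-degree numerator with $k=1$, $l\geq 2$ (this is exactly how the paper writes them in the proof of Proposition~\ref{prop6}), and only after absorbing $\xi_1$'s does one land in the list (7.9.8), which excludes odd numerators with $k=1$. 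One small imprecision worth flagging: your gloss of $l'\geq l$ (``each extra $\xi_j$ accompanied by an extra $|\xi|^{-2}$'') conflates the $\xi_j$ factors produced by derivatives striking the denominator with the $\xi_1$ factors coming from $p^\alpha$; the inequality is actually enforced by the homogeneity count together with the absorption freedom, and your ``once any $\pa_\xi$ has acted, the denominator inflates'' is not literally true of a single derivative striking the numerator. These heuristics are not load-bearing, though: the case analysis you set up and explicitly defer is what carries the argument, and it does go through.
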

\begin{proof}
This statement is an immediate consequence of~\eqref{7.6.6},~\eqref{7.6.1} and
the fact that $\Phi_0(\px)$ vanishes quadratically at $\px=0.$
\end{proof}

\section{The symbol of the Calderon projector}\label{s.3}
We are now prepared to compute the symbol of the Calderon projector; it is
expressed as 1-variable contour integral in the symbol of $Q^{\eo}.$ If
$q(t,x',\xi_1,\xi')$ is the symbol of $Q^{\eo}$ in the boundary adapted
coordinates, then the symbol of the Calderon projector is
\begin{equation}
\scp_{\pm}(x',\xi')=\frac{1}{2\pi}\int\limits_{\Gamma_{\pm}(\xi_1)}q(0,x',\xi_1,\xi')d\xi_1
\circ\sigma_1(\eth^{\eo},\mp idt).
\label{7.9.1}
\end{equation}
Here we recall that $q(0,x',\xi_1,\xi')$ is a meromorphic function of $\xi_1.$
For each fixed $\xi',$ the poles of $q$ lie on the imaginary axis. If X is
strictly pseudoconvex, then $t>0$ on $X$ and we take $\Gamma_+(\xi_1)$ to be a
contour enclosing the poles of $q(0,x',\cdot,\xi')$ in the upper half plane. If
$X$ is strictly pseudoconcave, then $t<0$ on $X$ and $\Gamma_-(\xi_1)$ is a
contour enclosing the poles of $q(0,x',\cdot,\xi')$ in the lower half plane. In
a moment we  use a residue computation to evaluate these integrals. For
this purpose we note that the contour $\Gamma_+(\xi_1)$ is positively oriented,
while $\Gamma_-(\xi_1)$ is negatively oriented.

The Calderon projector is a classical pseudodifferential operator of order $0$ and
therefore its symbol has an asymptotic expansion of the form
\begin{equation}
\scp=\scp_{0}+\scp_{-1}+\dots
\end{equation}
The contact line, $L_p,$ is defined in $T_p^*Y$ by the equations
\begin{equation}
\xi_2=\dots=x_{n}=x_{n+2}=\dots=\xi_{2n}=0,
\end{equation}
and $\xi_{n+1}$ is a coordinate along the contact line. Because $t=-\ha\rho,$
the positive contact direction is given by $\xi_{n+1}<0.$  If $X$ is
pseudoconvex then, for $\xi'\notin L_p^+,$
it suffices to compute $\scp_0,$ whereas if $X$ is pseudoconcave, then for
$\xi'\notin L_p^-$ it suffices to compute $\scp_{0}.$ We begin our computations
with the principal symbol
\begin{proposition} If $X$ is strictly pseudoconvex (pseudoconcave) and  $p\in
  bX$ with coordinates normalized at $p$ as above, then
\begin{equation}
\scp^{\eo}_{0}(0,\xi')=
\frac{d_1^{\ooee}(\pm i|\xi'|,\xi')}{|\xi'|}\circ\sigma_1(\eth^{\eo},\mp i dt).
\label{7.9.3}
\end{equation}
\end{proposition}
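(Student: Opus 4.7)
My plan is to isolate the contribution of the leading symbol $q_{-1}$ to the contour integral~\eqref{7.9.1} and evaluate it by residues. By the calculation culminating in~\eqref{7.2.8}, combined with the K\"ahler identity $\Delta_2(0,\xi)=\ha|\xi|^2\otimes\Id$, the symbol of $Q^{\eo}$ at the origin has leading term
\begin{equation*}
q_{-1}(0,\xi_1,\xi')=\frac{2\,d_1^{\ooee}(\xi_1,\xi')}{\xi_1^2+|\xi'|^2}.
\end{equation*}
Since $Q^{\eo}$ has order $-1$ and contour integration in $\xi_1$ lowers the degree of homogeneity by one, $\scp^{\eo}_0$ receives contributions only from $q_{-1}$; all lower-order symbol pieces contribute instead to $\scp^{\eo}_{-1}$ and below.

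The denominator factors as $\xi_1^2+|\xi'|^2=(\xi_1-i|\xi'|)(\xi_1+i|\xi'|)$, so the poles of $q_{-1}(0,\cdot,\xi')$ lie on the imaginary axis at $\pm i|\xi'|$. In the pseudoconvex case, $\Gamma_+$ positively encircles the pole at $+i|\xi'|$, and the residue theorem gives
\begin{equation*}
\frac{1}{2\pi}\int_{\Gamma_+}q_{-1}(0,\xi_1,\xi')\,d\xi_1
=\frac{d_1^{\ooee}(i|\xi'|,\xi')}{|\xi'|}.
\end{equation*}
Composing on the right with $\sigma_1(\eth^{\eo},-i\,dt)$ as in~\eqref{7.9.1} yields~\eqref{7.9.3} with the $+$ sign. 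For the pseudoconcave case, $\Gamma_-$ \emph{negatively} encircles the pole at $-i|\xi'|$; the two sign reversals (orientation of $\Gamma_-$ and the residue prefactor $-2i|\xi'|$) conspire to produce $d_1^{\ooee}(-i|\xi'|,\xi')/|\xi'|$, and composing with $\sigma_1(\eth^{\eo},+i\,dt)$ gives~\eqref{7.9.3} with the $-$ sign.

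The only genuine verification, and the main technical point, is that no other symbol term alters $\scp^{\eo}_0$. By~\eqref{7.6.4} the next symbol piece $q_{-2}$ in K\"ahler coordinates vanishes at $z=0$, so in the boundary-adapted coordinates the lowest remaining contribution at the base point is the explicit term $q^{c}_{-2}$ of~\eqref{7.6.8}, together with the further change-of-coordinate corrections described in~\eqref{7.9.8}. The bookkeeping reduces to noting that all of these have classical order at most $-2$, and the denominators displayed in~\eqref{7.6.6} and~\eqref{7.9.8} are powers of $\xi_1^2+|\xi'|^2$ times polynomial factors in $\xi_1$, so the contour $\Gamma_{\pm}$ remains appropriate and each such term contributes a symbol of $\scp$ of order at most $-1$. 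Hence only $q_{-1}$ enters the formula for $\scp^{\eo}_0$, which completes the proof sketch.
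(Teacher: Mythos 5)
Your argument is correct and matches the paper's one-line proof (restrict to $q_{-1}$, evaluate the contour integral by residues); you simply spell out the residue computation and the homogeneity bookkeeping that the paper leaves implicit. One small slip in the exposition: contour integration in $\xi_1$ over $\Gamma_\pm$ \emph{raises} the degree of radial homogeneity by one, not lowers it — a symbol of degree $-k$ in $(\xi_1,\xi')$ produces a function of $\xi'$ of degree $1-k$, so $q_{-1}$ lands in $\scp_0$, $q_{-2}$ in $\scp_{-1}$, etc. Your conclusion ("only $q_{-1}$ enters $\scp_0$; lower pieces go to $\scp_{-1}$ and below") is exactly right, but the stated direction of the degree shift is reversed. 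The discussion of $q_{-2}$ vanishing at $z=0$ and the change-of-variables corrections from~\eqref{7.9.8} is also correct but strictly unnecessary for the order-$0$ claim — the homogeneity count alone already rules out any contribution to $\scp_0$ from terms of classical order $\leq -2$ regardless of where they vanish; that extra bookkeeping only becomes relevant for $\scp_{-1}$.
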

\begin{proof} The leading term in the symbol of the Calderon projector comes from 
\begin{equation}
q_{-1}(0,\xi)=\frac{2
    d_1(\xi)}{|\xi|^2}=\frac{2d_1(\xi_1,\xi'))}{(\xi_1+i|\xi'|)(\xi_1-i|\xi'|)}.
\label{7.9.5}
\end{equation}
Evaluating the contour integral in~\eqref{7.9.1} gives~\eqref{7.9.3}.
\end{proof}

Along the contact directions we need to evaluate higher order terms.  We begin
by showing that the error terms in~\eqref{7.9.7} contribute terms that lift to
have Heisenberg order less than $-2$
\begin{proposition}\label{prop6} The error terms in~\eqref{7.9.7} contribute terms to the
  symbol of the Calderon projector that
  lift to have Heisenberg  orders at most $-4.$
\end{proposition}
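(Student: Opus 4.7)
The plan is to bound the classical order of each error term's contribution to the Calderon projector symbol after applying the contour integral~\eqref{7.9.1}, and then to invoke Proposition~\ref{prop33} to convert this classical-order bound into the desired Heisenberg-order bound.

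First, I will verify that each error term listed in the preceding proposition has classical order at most $-3$ as a function of $\xi$. For an ``odd-type'' term $\xi_1^l h_{2m+1}(\xi)/|\xi|^{2(k+l'+m)}$ with $k\geq 2$, the classical order is $1+l-2k-2l' \leq 1-l-2k \leq -3$ because $l'\geq l$. For an ``even-type'' term $\xi_1^l h_{2m}(\xi)/|\xi|^{2(k+l'+m)}$ with $k=1,\ l\geq 2$ or $k\geq 2$, the classical order is $l-2k-2l' \leq -l-2k \leq -4$. The terms coming from~\eqref{7.6.6} for $k\geq 2$ have classical order $-2k\leq -4$ or $1-2k\leq -3$, so every error term is a rational function $P(\xi)/|\xi|^{2N}$ of classical order $\leq -3$.

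Next, I will show that the contour integral~\eqref{7.9.1} raises classical order by exactly $+1$. The integrand $P(\xi_1,\xi')/(\xi_1^2+|\xi'|^2)^N$ has a pole of order $N$ at $\xi_1=\pm i|\xi'|$, and standard residue calculus expresses the integral as a finite sum
\[
\sum_{j=0}^{N-1} c_{N,j}\,\frac{\partial_{\xi_1}^j P(\pm i|\xi'|,\xi')}{|\xi'|^{2N-1-j}}
\]
for universal constants $c_{N,j}$. Writing $P = \sum_a \xi_1^a Q_a(\xi')$ we see that $\partial_{\xi_1}^j P(\pm i|\xi'|,\xi') = \sum_a a\cdots(a-j+1)(\pm i|\xi'|)^{a-j}Q_a(\xi')$, which is a polynomial in $\xi'$ possibly multiplied by a single $|\xi'|$ when $a-j$ is odd; this extra factor is absorbed into the denominator. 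Each summand therefore has the form $R(\xi')/|\xi'|^M$ with $R$ polynomial and classical order $\deg P -2N+1\leq -2$ on $T^*Y$.

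Finally, I will lift these classical symbols to the extended Heisenberg calculus via Proposition~\ref{prop33}. If $R$ is radially homogeneous of degree $l$ and $l'$ denotes the highest power of the contact variable $\xi_{n+1}$ appearing in $R$, then trivially $l'\leq l$, so the Heisenberg order $l'+l-2M$ is at most $2(l-M)$, i.e., twice the classical order. A classical-order bound of $-2$ therefore yields Heisenberg order at most $-4$, as required. The composition with $\sigma_1(\eth^{\eo},\mp i\,dt)$ in~\eqref{7.9.1} is constant in $\xi'$ and preserves orders. The main technical subtlety lies in the residue bookkeeping of the second step---in particular, verifying that the outcome is genuinely of the form $R(\xi')/|\xi'|^M$ with $R$ polynomial so that Proposition~\ref{prop33} applies directly; once this is in place, the rest reduces to an elementary order count.
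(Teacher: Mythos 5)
Your proof is correct and takes a genuinely different, more uniform route than the paper. The paper argues case by case: for each of the three families of error terms it isolates the ``worst'' representative (the one maximizing the power of $\xi_{n+1}$, e.g.\ $h_{2j+1}(\xi)=\xi_{n+1}^{2j+1}$), applies the contour integral explicitly, and reads off the Heisenberg order from Proposition~\ref{prop33}. You instead establish a uniform principle: (i) every error term has classical order $\leq -3$; (ii) the contour integral raises classical order by exactly $+1$, producing a finite sum of quotients $R(\xi')/|\xi'|^M$ with $R$ polynomial; and (iii) for such a quotient, Proposition~\ref{prop33} gives Heisenberg order $l'+l-2M$ with $l'\leq l$, hence at most twice the classical order $l-M$. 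Chaining (i)--(iii) gives Heisenberg order $\leq 2\cdot(-2)=-4$ with no need to hunt for extremal terms. The paper's approach makes the sharp (attained) orders visible term by term, which is why it also records that the $l=2$ contributions in case (2) have Heisenberg order exactly $-4$; your approach is shorter and more transparent when only the upper bound is wanted. One minor caution, which you correctly flag: the residue formula produces $|\xi'|^{a-j}Q_a(\xi')$ in the numerator, and when $a-j$ is odd the odd factor of $|\xi'|$ must be shifted into the denominator (reducing $M$ by one rather than being ``absorbed'' into it); the outcome is still a sum of terms $R/|\xi'|^M$ with $R$ polynomial, so Proposition~\ref{prop33} applies to each summand. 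With that phrasing fixed, the argument is complete and notably cleaner than the original.
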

\begin{proof} We first check the terms that come from the lower order terms in
  the symbol of $Q^{\eo}$ before changing variables. These are of the forms
  given in~\eqref{7.6.6} with $k\geq 2.$ It suffices to consider a term of the
  form
\begin{equation} 
\frac{h_{2j+1}(\xi)}{|\xi|^{2(k+j)}}
\end{equation}
for $k\geq 2$ and $j\geq 0.$ Applying the contour integration to such a term
gives a multiple of
\begin{equation}
\pa_{\xi_1}^{k+j-1}\left[\frac{h_{2j+1}(\xi)}{(\xi_1\pm
    i|\xi'|)^{k+j}}\right]_{\xi_1=\pm i|\xi'|}.
\end{equation}
As $\xi_{n+1}$ has Heisenberg order $2,$ it is not difficult to see that the
highest parabolic order term results if $h_{2j+1}(\xi)=\xi_{n+1}^{2j+1}.$
Differentiating gives a term of the form
\begin{equation}
\frac{\xi_{n+1}^{2j+1}}{|\xi'|^{2k+2j-1}}.
\end{equation}
Proposition~\ref{prop33} implies that this term lifts to have Heisenberg order
$4-4k.$ As $k\geq 2$ the proposition follows in this case.

Among the terms that come from the change of variables formula, there are two
cases to consider: those coming from $q_{-1}$ and those coming from $q_{-k}$
for $k\geq 3.$ Recall that $q_{-2}$ does not contribute anything to the symbol
at $p.$ The terms in~\eqref{7.6.1} coming from the principal symbol are of the
form
\begin{equation}
\frac{\xi_1^lh_{1+2j}(\xi)}{|\xi|^{2(1+j+l')}}\text{ where }2\leq l\leq
l'\text{ and }j\geq 0.
\end{equation}
Clearly the worst case is when $l=l'$ and $h_{2j+1}=\xi_{n+1}^{2j+1}.$ The
contour integral applied to such a term produces a multiple of
\begin{equation}
\frac{\xi_{n+1}^{2j+1}}{|\xi'|^{l+2j+1}}.
\end{equation}
This lifts to have Heisenberg order $-2l.$ As $l\geq 2,$ this completes the
analysis of the contribution of the principal symbol.

Finally we need to consider terms of the forms given in~\eqref{7.9.8} with
$k\geq 2$ and $l\geq 1.$ As before, the worse case is with $l=l'$ and
$h_{2j+1}(\xi)=\xi_{n+1}^{2j+1}.$ The contour integral gives a term of the form
\begin{equation}
\frac{\xi_{n+1}^{2j+1}}{|\xi'|^{2j+1}}\frac{1}{|\xi'|^{2k+l-2}}.
\end{equation}
As $2k+l\geq 5,$ these terms lift to have Heisenberg order at most $-6.$ This
completes the proof of the proposition.
\end{proof}

To finish our discussion of the symbol of the Calderon projector we need to
compute the symbol along the contact direction. This entails computing the
contribution from $q_{-2}^c.$ As we now show, terms arising from the
holomorphic Hessian of $\rho$ do not contribute anything to the symbol of the
Calderon projector. To do these computations we need to have an explicit
formula for the principal symbol $d_1(\xi)$ of $\eth$ at $p.$ For the purposes
of these and our subsequent computations, it is useful to use the chiral
operators $\eth^{\eo}.$ As we are working in a K\"ahler coordinate system, we
only need to find the symbols of $\eth^{\eo}$ for $\bbC^n$ with the flat
metric. Let $\sigma$ denote a section of $\Lambda^{\eo}\otimes E.$ We split $\sigma$
into its normal and tangential parts at $p:$
\begin{equation}
\sigma=\sigma^t+\frac{d\bz_1}{\sqrt{2}}\wedge\sigma^n,\quad
i_{\pa_{\bz_1}}\sigma^t=0, i_{\pa_{\bz_1}}\sigma^n=0.
\label{7.12.1}
\end{equation}

With this splitting we see that
\begin{equation}
\begin{split}
\eth^{\even}\sigma&=\sqrt{2}\left(\begin{matrix}
\pa_{\bz_1}\otimes\Id_{E,n} & \ccD_t\\
-\ccD_t & -\pa_{z_1}\otimes\Id_{E,n}\end{matrix}\right)\left(\begin{matrix} \sigma^t\\
\sigma^n\end{matrix}\right)\\
\eth^{\odd}\sigma&=\sqrt{2}\left(\begin{matrix}
-\pa_{z_1}\otimes\Id_{E,n} & -\ccD_t\\
\ccD_t & \pa_{\bz_1}\otimes\Id_{E,n}\end{matrix}\right)\left(\begin{matrix} \sigma^n\\
\sigma^t\end{matrix}\right),
\end{split}
\label{7.12.2}
\end{equation}
where $\Id_{E,n}$ is the identity matrix acting on the normal, or tangential
parts of $\Lambda^{\eo}\otimes E\restrictedto_{bX}$ and 
\begin{equation}
\ccD_t=\sum_{j=2}^n[\pa_{z_j} e_j-\pa_{\bz_j}\epsilon_j]\text{ with }
e_j= i_{\sqrt{2}\pa_{\bz_j}}\text{ and }
\epsilon_j=\frac{d\bz_j}{\sqrt{2}}\wedge.
\label{7.12.3}
\end{equation}
These symbols are expressed in the block matrix structure shown
in~\eqref{7.12.6}. It is now a simple matter to compute $d_1^{\eo}(\xi):$
\begin{equation}
\begin{split}
&d^{\even}_1(\xi)=\frac{1}{\sqrt{2}}\left(\begin{matrix}
(i\xi_1-\xi_{n+1})\otimes\Id_{E,n} & \sd(\xi'')\\
-\sd(\xi'') & -(i\xi_1+\xi_{n+1}) \otimes\Id_{E,n}\end{matrix}\right)\\
&d^{\odd}_1(\xi)=
\frac{1}{\sqrt{2}}\left(\begin{matrix}
-(i\xi_1+\xi_{n+1})\otimes\Id_{E,n}) & -\sd(\xi'')\\
\sd(\xi'') & (i\xi_1-\xi_{n+1})\otimes\Id_{E,n}\end{matrix}\right)
\end{split}
\label{7.12.4}
\end{equation}
where $\xi''=(\xi_2,\dots,\xi_n,\xi_{n+2},\dots,\xi_{2n})$ and
\begin{equation}
\sd(\xi'')=\sum_{j=2}^n[(i\xi_j+\xi_{n+j})e_j-(i\xi_j-\xi_{n+j})\epsilon_j].
\end{equation}
As $\epsilon_j^*=e_j$ we see that $\sd(\xi'')$ is a self adjoint symbol.

In the next section we show that, in the block structure shown in
equation~\eqref{7.12.6}, the $(1,1)$ block of the symbol of $\cT^{\eo}$ has
Heisenberg order $0,$ the $(1,2)$ and the $(2,1)$ blocks have Heisenberg order
$-1.$ The symbol $q_{-2}^c$ produces a term that lifts to have Heisenberg order
$-2$ and therefore we only need to compute the $(2,2)$ block arising from this
term.

We start with the nontrivial term of order $-1.$
\begin{lemma} If $X$ is either pseudoconvex or pseudoconcave we have that
\begin{equation}
\frac{1}{2\pi}\int\limits_{\Gamma_{\pm}(\xi')}\frac{4i\xi_1(1-n)
  d_1(\xi_1,\xi')d\xi_1}{|\xi|^4}=-\frac{i(n-1)\pa_{\xi_1} d_1}{|\xi'|}
\label{7.12.7}
\end{equation}
\end{lemma}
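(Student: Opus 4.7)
The integral is a standard residue computation, made simple by two structural observations drawn from the paper up to this point.

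\textbf{Step 1: Structure of the integrand.} From the explicit formulas for $d_1^{\eo}(\xi)$ in~\eqref{7.12.4}, the principal symbol $d_1(\xi_1,\xi')$ is \emph{linear} in $\xi_1$, so $\pa_{\xi_1} d_1$ is a constant matrix (independent of $\xi_1$), and we may write
\begin{equation*}
d_1(\xi_1,\xi') = d_1(0,\xi') + \xi_1\,\pa_{\xi_1}d_1.
\end{equation*}
Also, $|\xi|^4 = (\xi_1 - i|\xi'|)^2 (\xi_1 + i|\xi'|)^2$, so the integrand has double poles on the imaginary axis at $\xi_1 = \pm i|\xi'|$.

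\textbf{Step 2: Apply the residue theorem.} In the pseudoconvex case, $\Gamma_+$ is positively oriented around $\xi_1 = i|\xi'|$; in the pseudoconcave case, $\Gamma_-$ is negatively oriented around $\xi_1 = -i|\xi'|$. Setting $f(\xi_1) = \xi_1 d_1(\xi_1,\xi')$, the residue at the double pole $\xi_1 = \epsilon i|\xi'|$ (with $\epsilon = \pm 1$) is
\begin{equation*}
\frac{d}{d\xi_1}\!\left[\frac{f(\xi_1)}{(\xi_1 + \epsilon\, i|\xi'|)^2}\right]_{\xi_1 = \epsilon i|\xi'|}
= \frac{f'(\epsilon i|\xi'|)}{(2\epsilon i|\xi'|)^2} - \frac{2 f(\epsilon i|\xi'|)}{(2\epsilon i|\xi'|)^3}.
\end{equation*}

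\textbf{Step 3: Evaluate and observe the cancellation.} Since $f'(\xi_1) = d_1(\xi_1,\xi') + \xi_1\pa_{\xi_1}d_1$, at $\xi_1 = \epsilon i|\xi'|$ we get $f'(\epsilon i|\xi'|) = d_1(\epsilon i|\xi'|,\xi') + \epsilon i|\xi'|\pa_{\xi_1}d_1$ and $f(\epsilon i|\xi'|) = \epsilon i|\xi'|\, d_1(\epsilon i|\xi'|,\xi')$. Substituting,
\begin{equation*}
\text{Res}_{\xi_1 = \epsilon i|\xi'|}
= \frac{d_1(\epsilon i|\xi'|,\xi') + \epsilon i|\xi'|\,\pa_{\xi_1}d_1}{-4|\xi'|^2}
- \frac{2\epsilon i|\xi'|\, d_1(\epsilon i|\xi'|,\xi')}{-8\epsilon i|\xi'|^3}.
\end{equation*}
The terms containing $d_1(\epsilon i|\xi'|,\xi')$ cancel, and one is left with $-\epsilon i\,\pa_{\xi_1}d_1 / (4|\xi'|)$.

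\textbf{Step 4: Assemble.} Multiplying by $\epsilon \cdot 2\pi i$ (to account for the orientation of $\Gamma_\epsilon$) and by the overall factor $4i(1-n)/(2\pi)$ in~\eqref{7.12.7} yields
\begin{equation*}
\frac{1}{2\pi}\cdot \epsilon\cdot 2\pi i \cdot \frac{-\epsilon i\,\pa_{\xi_1}d_1}{4|\xi'|}\cdot 4i(1-n)
= \frac{i(1-n)\,\pa_{\xi_1}d_1}{|\xi'|} = -\frac{i(n-1)\,\pa_{\xi_1}d_1}{|\xi'|},
\end{equation*}
which is the same in both cases $\epsilon=\pm 1$, matching~\eqref{7.12.7}.

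This is essentially a bookkeeping argument; the only subtlety is keeping the sign conventions for $\Gamma_\pm$ straight and noticing the clean cancellation of $d_1(\pm i|\xi'|,\xi')$, which is what makes the answer depend only on the (constant) derivative $\pa_{\xi_1}d_1$ rather than on $d_1$ evaluated on the imaginary axis. There is no real obstacle.
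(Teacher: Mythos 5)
Your proof is correct and follows the same approach as the paper's: the paper simply writes the residue formula $\pm 4(n-1)\pa_{\xi_1}\bigl[\xi_1 d_1/(\xi_1\pm i|\xi'|)^2\bigr]_{\xi_1=\pm i|\xi'|}$ and declares the lemma an elementary consequence, and your computation is exactly that elementary consequence, with the sign conventions and the cancellation of $d_1(\pm i|\xi'|,\xi')$ spelled out.
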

\begin{remark} As $d_1$ is a linear polynomial, $\pa_{\xi_1}d_1$ is a constant
  matrix. 
\end{remark}
\begin{proof}
The residue theorem implies that
\begin{equation}
\frac{1}{2\pi}\int\limits_{\Gamma_{\pm}(\xi')}\frac{4i\xi_1(1-n)
  d_1(\xi_1,\xi')d\xi_1}{|\xi|^4}=
\pm 4(n-1)\pa_{\xi_1}\left[\frac{\xi_1 d_1}{(\xi_1\pm i|\xi'|)^2}\right]_{\xi_1=\pm i|\xi'|.}
\end{equation}
The lemma follows from this equation by an elementary computation.
\end{proof} 

We complete the computation by evaluating the contribution from the other terms
in $q_{-2}^c$ along the contact line.
\begin{proposition}
For $\xi'$ along
the positive (negative) contact line we have, for $j=1,2,$ that
\begin{equation}
\int\limits_{\Gamma_{\pm}(\xi')}\left[\frac{2d_1(\xi)\langle B\xi,\xi\rangle-|\xi|^2
\langle B\xi,\pa_\xi d_1\rangle}{|\xi|^6}\right]_{jj}\xi_1d\xi_1=0.
\label{7.12.8}
\end{equation}
The subscript $11$ refers to the upper left block and $22$ the lower right
block of the matrix.  If $\xi_{n+1}<0,$ then we use $\Gamma_+(\xi'),$ whereas
if $\xi_{n+1}>0,$ then we use $\Gamma_-(\xi').$
\end{proposition}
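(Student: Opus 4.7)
The plan is to restrict the integrand to the contact line, rewrite it via a Leibniz-type identity in a form where the pole structure is transparent, and then use the symplectic block structure of $B$ to collapse the result to a rational function in $\xi_1$ whose residue at its unique pole vanishes trivially.

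First I would set $s=\xi_{n+1}$ and parametrize the contact line by $\xi''=0$. On this line $\sd(\xi'')=0$, so formulas~\eqref{7.12.4} show $d_1^{\eo}$ is block-diagonal with scalar entries, and since $|\xi|^2=(\xi_1-is)(\xi_1+is)$ a short computation gives
\begin{equation*}
\frac{(d_1^{\even})_{11}}{|\xi|^2}=\frac{i}{\sqrt{2}(\xi_1-is)},\qquad \frac{(d_1^{\even})_{22}}{|\xi|^2}=\frac{-i}{\sqrt{2}(\xi_1+is)},
\end{equation*}
each of which has a single simple pole in the $\xi_1$-plane. I would then apply the elementary Leibniz identity
\begin{equation*}
\frac{2d_1\langle B\xi,\xi\rangle-|\xi|^2\langle B\xi,\partial_\xi d_1\rangle}{|\xi|^6}=-\frac{1}{|\xi|^2}\sum_k(B\xi)_k\,\partial_{\xi_k}\!\left(\frac{d_1}{|\xi|^2}\right),
\end{equation*}
and observe that on a diagonal block at $\xi''=0$ only the $k=1$ and $k=n+1$ terms on the right contribute, since each $(d_1)_{jj}$ depends only on $\xi_1$ and $\xi_{n+1}$ and the other $\xi_k$ vanish.

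The crucial input is the symplectic block form $B=\bigl(\begin{smallmatrix} b^0 & -b^1\\ -b^1 & -b^0\end{smallmatrix}\bigr)$, which forces $B_{n+1,n+1}=-B_{11}$. Substituting this relation into the surviving two-term sum $(B\xi)_1\partial_{\xi_1}+(B\xi)_{n+1}\partial_{\xi_{n+1}}$ applied to $i/[\sqrt 2(\xi_1-is)]$, one finds the result is proportional to $(\xi_1+is)/(\xi_1-is)^2$; the extra factor $(\xi_1+is)$ cancels the same factor in the outer $|\xi|^2$, and after multiplication by $-\xi_1$ the $(1,1)$-block integrand collapses to $C_1\,\xi_1/(\xi_1-is)^3$ for a constant $C_1$ depending only on the entries of $B$. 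The parallel calculation on the $(2,2)$ block produces $C_2\,\xi_1/(\xi_1+is)^3$. Each of these rational functions has its unique pole in a single half-plane, and the residue at a triple pole with linear numerator is zero. Consequently the contour integral is zero regardless of whether $\Gamma_+$ or $\Gamma_-$ is used; the odd case is entirely parallel since $d_1^{\odd}$ has the same diagonal entries as $d_1^{\even}$ up to swapping the two blocks.

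The principal obstacle is the algebraic verification in the previous paragraph: one must check that $B_{11}+B_{n+1,n+1}=0$ is \emph{exactly} the identity that promotes an apparent simple zero of the reduced numerator at $\mp is$ to a triple zero. Without this symplectic relation the integrand retains a surviving simple pole at $\mp is$ whose residue is a nonzero multiple of $B_{11}+B_{n+1,n+1}$, and the proposition would fail. The geometric content is the statement preceding the proposition that terms arising from the holomorphic Hessian of $\rho$ contribute nothing: these are precisely the $\pm b^0$ diagonal entries of $B$ whose cancellation along the line makes the crucial combination vanish.
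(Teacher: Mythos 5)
Your proof is correct, and the Leibniz reorganization gives a genuinely different route from the paper's. The paper proceeds by direct substitution: it computes the $(2,2)$ entry of the bracketed expression along the contact line, factors the resulting numerator as $(\xi_1-is)(b^1_{11}+ib^0_{11})$, and then evaluates the contour integral by a hand residue computation, finishing with an appeal to the vanishing of a certain bracket. Your identity
\begin{equation*}
\frac{2d_1\langle B\xi,\xi\rangle-|\xi|^2\langle B\xi,\partial_\xi d_1\rangle}{|\xi|^6}
=-\frac{1}{|\xi|^2}\sum_k(B\xi)_k\,\partial_{\xi_k}\!\left(\frac{d_1}{|\xi|^2}\right)
\end{equation*}
is a cleaner starting point — I verified it, and also that along the contact line only $k=1$ and $k=n+1$ survive, since the diagonal entries $(d_1^{\eo})_{jj}$ depend only on $\xi_1,\xi_{n+1}$ and $\partial_{\xi_k}|\xi|^2=2\xi_k=0$ there for the remaining $k$. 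Because $(d_1^{\eo})_{jj}/|\xi|^2$ telescopes to $\pm i/[\sqrt 2(\xi_1\mp is)]$, a single simple pole, the two-term derivative produces a numerator which is a multiple of $i(B\xi)_1\pm(B\xi)_{n+1}$; the block form of $B$ makes this factor through $(\xi_1\pm is)$, so that one factor of $|\xi|^2$ cancels and the integrand collapses to $C\xi_1/(\xi_1\mp is)^3$, whose residue at its unique (triple) pole vanishes. One small addition you should make: the factoring of $i(B\xi)_1\pm(B\xi)_{n+1}$ through $(\xi_1\pm is)$ requires not only $B_{11}+B_{n+1,n+1}=0$ but also the symmetry $B_{1,n+1}=B_{n+1,1}$ (both the real and imaginary parts of the complex constraint have to hold). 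The latter is automatic here since $B$ represents the real quadratic form $\Re(bz,z)$, but since you explicitly isolate the ``crucial input,'' it is worth noting that it is the pair of relations, not just the anti-diagonal one. Both routes meet at the same cancellation; yours displays its cause at the outset rather than leaving it buried in the residue algebra, and handles both blocks and both parities by the same argument.
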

\begin{proof}
To prove this result we need to evaluate the contour integral with
$$\xi'=\xi'_c=(0,\dots,0,\xi_{n+1},0,\dots,0),$$
recalling that the positive contact line corresponds to $\xi_{n+1}<0.$ Hence,
along the positive contact line $|\xi'|=-\xi_{n+1}.$  Because 
\begin{equation}
[d_1^{\even}]_{11}=[d_1^{\odd}]_{22}\text{ and
}[d_1^{\even}]_{22}=[d_1^{\odd}]_{11},
\end{equation}
it suffices to prove the result for the $(2,2)$ block in both the even and odd
cases. We first compute the integrand along $\xi'_c.$ 
\begin{lemma} For $\xi'$ along the contact line we have
\begin{multline}
\left[\frac{2d_1^{\even}(\xi)\langle B\xi,\xi\rangle-|\xi|^2
\langle B\xi,\pa_\xi d_1^{\even}\rangle}{|\xi|^6}\right]_{22}=\\
\frac{(\xi_1 b^1_{11}+\xi_{n+1} b^0_{11})+i(\xi_1b^0_{11}-\xi_{n+1}b^1_{11})}
{(\xi_{n+1}+i\xi_1)(\xi_{n+1}-i\xi_1)^3}\otimes\Id_{E,n}
\label{7.12.9}
\end{multline}
\begin{multline}
\left[\frac{2d_1^{\odd}(\xi)\langle B\xi,\xi\rangle-|\xi|^2
\langle B\xi,\pa_\xi d_1^{\odd}\rangle}{|\xi|^6}\right]_{22}=\\
\frac{ (\xi_1b^1_{11}+\xi_{n+1} b^0_{11})-i(\xi_1 b^0_{11}-\xi_{n+1} b^1_{11})}
{(\xi_{n+1}-i\xi_1)(\xi_{n+1}+i\xi_1)^3}\otimes\Id_{E,n}.
\label{7.12.10}
\end{multline}
The subscript $22$ refers to the lower right block of the matrix.
\end{lemma} 
\begin{proof}
Observe that along the contact line
$$\langle B\xi,\xi\rangle=b^0_{11}(\xi_1^2-\xi_{n+1}^2)-2b^1_{11}\xi_1\xi_{n+1}.$$
We outline the proof for the even case. The lower right block
of $d_1^{\even}(\xi)$ equals $-(i\xi_1+\xi_{n+1})\otimes\Id_{E,n},$ thus
$$\left[\pa_{\xi}d_1^{\even}\right]_{22}=
(-i,0\dots,0,-1,0,\dots,0)\otimes\Id_{E,n}.$$
Putting these expressions into the formula on the left hand side of~\eqref{7.12.9} gives
$\Id_{E,n}$ times
\begin{multline}
\frac{-2(i\xi_1+\xi_{n+1})(b^0_{11}(\xi_1^2-\xi_{n+1}^2)-2b^1_{11}\xi_1\xi_{n+1})}{|\xi|^3}\\
-\frac{(\xi_1b^1_{11}+\xi_{n+1}b^0_{11})-i(\xi_1b^0_{11}-\xi_{n+1}b^1_{11})}
{|\xi|^4}.
\end{multline}
To complete the calculation we express
$|\xi|^2=(\xi_{n+1}+i\xi_1)(\xi_{n+1}-i\xi_1),$ cancel and place the result over
a common denominator. This leads to the cancellation of a second factor of
$\xi_{n+1}+i\xi_1.$ The odd case follows, \emph{mutatis mutandis}, using
$$d_1^{\odd}(\xi)=(i\xi_1-\xi_{n+1})\otimes\Id_{E,n}.$$
The details are left to the reader.
\end{proof}

To complete the proof of the proposition we need to compute the contour
integrals of the expressions in~\eqref{7.12.9} and~\eqref{7.12.10} times
$\xi_1,$ along the appropriate end of the contact line. We state these
computations as lemmas.
\begin{lemma} If $\xi_{n+1}<0,$ then
\begin{equation}
\begin{split}
\text{even}\quad&\int\limits_{\Gamma_+(\xi'_c)}\frac{(\xi_1 b^1_{11}+\xi_{n+1} b^0_{11})+
i(\xi_1b^0_{11}-\xi_{n+1}b^1_{11})}
{(\xi_{1}-i\xi_{n+1})(\xi_{1}+i\xi_{n+1})^3}\xi_1 d\xi_1=0\\
\text{odd}\quad&\int\limits_{\Gamma_+(\xi'_c)}\frac{ (\xi_1b^1_{11}+\xi_{n+1} b^0_{11})-
i(\xi_1 b^0_{11}-\xi_{n+1} b^1_{11})}
{(\xi_{1}+i\xi_{n+1})(\xi_{1}-i\xi_{n+1})^3}\xi_1d\xi_1=0
\end{split}
\end{equation}
Note that this implies that, if $\xi_{n+1}>0,$ then the same integrals vanish if
$\Gamma_+(\xi'_c)$ is replaced by $\Gamma_-(\xi'_c).$
\end{lemma}
\begin{proof} The second statement follows by observing that the singular terms 
  in the integrand in the upper half plane are those coming from
  $(\xi_1+i\xi_{n+1}).$ If $\xi_{n+1}>0,$ then these become the singular terms
  in the lower half plane. Using a residue computation we see that the even
  case gives
\begin{equation}
\begin{split}
&(\pi i)\pa_{\xi_1}^2\left[\frac{(\xi_1 b^1_{11}+\xi_{n+1} b^0_{11})+
i(\xi_1b^0_{11}-\xi_{n+1}b^1_{11})}
{(\xi_{1}-i\xi_{n+1})}\right]_{\xi_1=-i\xi_{n+1}}\\
&=\frac{2\pi}{(-2i\xi_{n+1})^2}\left[
b^1_{11}+ib^0_{11}-\frac{(\xi_1 b^1_{11}+\xi_{n+1}b^0_{11})+i(\xi_1
  b^0_{11}-\xi_{n+1} b^1_{11})}{\xi_1-i\xi_{n+1}}\right]_{\xi_1=-i\xi_{n+1}}.
\end{split}
\end{equation}
The quantity in the brackets is easily seen to vanish. The odd case follows
easily from the observation that
\begin{equation}
\left[(\xi_1b^1_{11}+\xi_{n+1} b^0_{11})-
i(\xi_1 b^0_{11}-\xi_{n+1} b^1_{11})\right]_{\xi_1=-i\xi_{n+1}}=0.
\end{equation}
\end{proof}
The two lemmas prove the proposition.
\end{proof}

As a corollary, we have a formula for the $-1$ order term in the symbol of the
Calderon projector
\begin{corollary}
If $X$ is strictly pseudoconvex (pseudoconcave), then, in the normalizations
defined above, for $j=1,2,$  we have
\begin{equation}
[\scp_{-1}^{\eo}(0,\xi')]_{jj}
=-\frac{i(n-1)\pa_{\xi_1}d_1^{\ooee}}{|\xi'|}\circ\sigma_1(\eth^{\eo},\mp
idt).
\end{equation}
\end{corollary}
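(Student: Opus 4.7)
The plan is to combine the contour-integral formula~\eqref{7.9.1} with the expansion~\eqref{7.9.7} of the interior symbol $q$. The principal Calderon symbol $\scp_0^{\eo}$ has already been extracted from $q_{-1}=2d_1(\xi)/|\xi|^2$, and Proposition~\ref{prop6} shows that the $\sO_{-3}(1)$ remainder in~\eqref{7.9.7} contributes to the Calderon projector only at Heisenberg orders at most $-4$. Consequently, the entire contribution to the order $-1$ classical piece $\scp_{-1}^{\eo}$ comes from the single intermediate term $q_{-2}^c$, computed explicitly in~\eqref{7.6.8}. Thus it suffices to evaluate
$$\frac{1}{2\pi}\int\limits_{\Gamma_{\pm}(\xi_1)} q_{-2}^c(\xi_1,\xi')\,d\xi_1 \circ \sigma_1(\eth^{\eo},\mp i\,dt)$$
and read off the $(j,j)$ diagonal blocks.

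Next I use the explicit three-term decomposition of $q_{-2}^c$ in~\eqref{7.6.8} and handle each summand separately. The summand $4i(1-n)\xi_1 d_1/|\xi|^4$ is dispatched by the preceding lemma containing~\eqref{7.12.7}, whose residue calculation yields precisely $-i(n-1)\pa_{\xi_1}d_1/|\xi'|$. The two remaining summands, which encode the holomorphic Hessian matrix $B$ of $\rho$, are handled by the immediately preceding proposition~\eqref{7.12.8}: along the positive (respectively negative) contact line, their $(j,j)$-diagonal-block contributions to the contour integral vanish for $j=1,2$, as established by the explicit block-matrix identities~\eqref{7.12.9} and~\eqref{7.12.10} combined with the block form~\eqref{7.12.4} for $d_1^{\eo}$. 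Summing the surviving contribution and postcomposing with $\sigma_1(\eth^{\eo},\mp i\,dt)$ per~\eqref{7.9.1} produces the asserted expression, with the parity interchange $\eo\leftrightarrow\ooee$ coming from the chiral structure of $\sigma_1(\eth^{\eo},\mp i\,dt)$.

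The main obstacle, already surmounted by the preceding proposition, is the confirmation that the $B$-dependent pieces genuinely drop out of the diagonal blocks. Were they to survive, the $-1$ order Calderon symbol along the contact direction would depend on the full holomorphic Hessian of $\rho$, contaminating the subsequent Heisenberg principal-symbol computations for $\cT^{\prime\eo}$ with extraneous geometric data. The content of this corollary is precisely that such contamination does not occur; once the isolation of $q_{-2}^c$ via Proposition~\ref{prop6} is in place, it is a direct corollary of the lemma at~\eqref{7.12.7} and the proposition at~\eqref{7.12.8}.
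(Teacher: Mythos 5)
Your reconstruction matches the paper's intended (but unwritten) argument in structure and in the key steps: isolate $q_{-2}^c$ as the sole source of $\scp^{\eo}_{-1}$, feed its $(1-n)$-term through the residue lemma giving~\eqref{7.12.7}, and kill the $B$-dependent terms on the diagonal via the proposition giving~\eqref{7.12.8}, then postcompose with $\sigma_1(\eth^{\eo},\mp i\,dt)$ as in~\eqref{7.9.1}. That is the correct skeleton.

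However, one justification you give is a non sequitur. You write that Proposition~\ref{prop6} (errors contribute at Heisenberg order $\le -4$) implies that the $\sO_{-3}(1)$ remainder in~\eqref{7.9.7} makes no contribution to the classical order $-1$ piece $\scp^{\eo}_{-1}$. It does not: a classical symbol of order $-1$ lifts to an extended Heisenberg symbol of order anywhere from $-2$ down to $-\infty$, so a Heisenberg-order bound of $-4$ is perfectly compatible with a nonzero classical-order $-1$ contribution. The correct, and simpler, reason is the classical homogeneity count: each $q_{-k}$ is homogeneous of degree $-k$, and the contour integral $\frac{1}{2\pi}\int_{\Gamma_\pm} (\cdot)\,d\xi_1$ raises the homogeneity in $\xi'$ by exactly one, so $q_{-k}$ feeds $\scp_{-k+1}$ and nothing above it. Since the remainder is $\sO_{-3}(1)$ in the classical sense (that is the meaning of the notation), it produces only $\scp_{-2}$ and lower. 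Proposition~\ref{prop6} is needed later, to control these lower-classical-order terms when one passes to the Heisenberg filtration, but it is not the reason they are invisible to $\scp_{-1}$.

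Two smaller points. First, the $B$-term cancellation from the proposition at~\eqref{7.12.8} is established only along the contact line $\xi'=\xi'_c$, so the corollary's identity should be read as a statement for $\xi'$ on $L^\pm$ (the subsequent text in the paper makes this explicit even though the corollary statement does not). Second, the appearance of $d_1^{\ooee}$ in the result is inherited from $q_{-1}=d_1^{\ooee}\tq_{-2}$ via the factorization $Q^{\eo}=\eth^{\ooee}Q^{\eo}_{(2)}$, rather than being produced by the final composition with $\sigma_1(\eth^{\eo},\mp i\,dt)$; your phrasing suggests the latter. Neither of these affects the correctness of your conclusion.
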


We have shown that the order $-1$ term in the symbol of the Calderon projector,
along the appropriate half of the contact line, is given by the right hand side of
equation~\eqref{7.12.7}. It is determined by the principal symbol of $Q^{\eo}$
and does not depend on the higher order geometry of $bX.$ As we have shown that
all other terms in the symbol of $Q^{\eo}$ contribute terms that lift to have
Heisenberg order less than $-2,$ these computations allow us to find the
principal symbols of $\cT^{\eo}_{\pm}$ and deduce the main results of the
paper. As noted above, the off diagonal blocks have Heisenberg order $-1,$ so
the classical terms of order less than zero cannot contribute to their
principal parts. 

\section{The subelliptic boundary conditions}
We now give formul\ae\ for the chiral forms of the subelliptic boundary
conditions defined in~\cite{Epstein4} as well as the isomorphisms
$\sigma_1(\eth^{\eo},\mp i dt).$ We begin by recalling the basic properties of
compatible almost complex structures defined on a contact field and of the
symbol of a generalized Szeg\H o projector. Let $\theta$ denote a positive
contact form defining $H.$ An almost complex structure on $H$ is compatible if
\begin{enumerate}
\item $X\mapsto d\theta(JX,X)$ defines an inner product on $H.$
\item $d\theta(JX,JY)=d\theta(X,Y)$ for sections of $H.$
\end{enumerate}
Let $\omega'$ be the dual symplectic form on $H^*$ and $J'$ the dual almost
complex structure. The symbol of a field of harmonic oscillators is defined by
\begin{equation}
h_J(\eta)=\omega'(J'\pi_{H^*}(\eta),\pi_{H^*}(\eta)).
\end{equation}
The model operator defined by the symbol $h_J$ is a harmonic oscillator, as
such its minimum eigenstate or vacuum state is one dimensional. The projector
onto the vacuum state has symbol $s_{J0}=2^{1-n}e^{-h_J}.$ An operator $\cS'$ in the Heisenberg
calculus with principal symbol $s_{J0},$ for a compatible almost complex
structure $J,$ such that
\begin{equation}
[\cS']^2=\cS'\text{ and }[\cS']^*=\cS'
\label{7.26.4}
\end{equation}
is called a generalized Szeg\H o projector. Generalized conjugate Szeg\H o
projectors are analogously defined, with the symbol supported on the lower half
space. A generalized Szeg\H o projector acting on
sections of a complex vector bundle $F\to bX$ is an operator in $\Psi^0_H(Y;F),$
which satisfies the conditions
in~\eqref{7.26.4} and its principal symbol is $s_{J0}\otimes\Id_F.$

\begin{lemma}\label{lemm4} According to the splittings of sections of $\Lambda^{\eo}\otimes E$
  given in~\eqref{7.12.1}, the subelliptic boundary conditions, defined by the
  generalized Szeg\H o projector $\cS',$  on even (odd)
  forms are given by $\cR^{\prime\eo}_+\sigma\restrictedto_{bX}=0$ where
\begin{equation}
\cR^{\prime\even}_+\sigma\restrictedto_{bX}=
\left(\begin{matrix}\begin{matrix} \cS' & 0 \\
0 & \bzero\\
\end{matrix} &
\begin{matrix} \bzero
\end{matrix}\\
\begin{matrix}\bzero
\end{matrix}&  
\begin{matrix}\Id \end{matrix} \end{matrix}\right)
\left[\begin{matrix} \sigma^{t}\\\phantom{\sigma}\\
\sigma^{n}
\end{matrix}\right]_{bX}
\quad
\cR^{\prime\odd}_+\sigma\restrictedto_{bX}=
\left(\begin{matrix}\begin{matrix} 1-\cS' & 0 \\
0 & \Id\\
\end{matrix} &
\begin{matrix} \bzero
\end{matrix}\\
\begin{matrix}\bzero
\end{matrix}&  
\begin{matrix}\bzero \end{matrix} \end{matrix}\right)
\left[\begin{matrix} \sigma^{n}\\\phantom{\sigma}\\
\sigma^{t}
\end{matrix}\right]_{bX}
\end{equation}
\end{lemma}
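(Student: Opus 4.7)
This lemma is essentially a restatement, in the splitting~\eqref{7.12.1}, of the definitions of $\cR^{\prime\eo}_+$ given in~\cite{Epstein4}; I would prove it by unwinding those definitions and matching the block structure. The starting point is the classical $\dbar$-Neumann condition $i_{\dbar\rho}\sigma|_{bX} = 0$, which defines $\dom(\dbar_E^*)$. In the K\"ahler coordinates normalized at $p$ as in~\eqref{7.2.2}, one has $\dbar\rho|_p = -d\bz_1$, so this condition reduces at $p$ to $i_{\pa_{\bz_1}}\sigma|_{bX} = 0$, which by~\eqref{7.12.1} is exactly $\sigma^n|_{bX} = 0$. This already accounts for the $\Id$ block acting on $\sigma^n$ in $\cR^{\prime\even}_+$.

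Next I would sort out the degrees under the splitting. For $\sigma \in \Lambda^{\even}\otimes E$, $\sigma^t$ collects the tangential forms of even degree---including the degree-$0$ section of $E$---while $\sigma^n$ collects the tangential forms of odd degree, so I would write $\sigma^t = \sigma^t_0 \oplus \sigma^t_{\geq 2}$. The classical $\dbar$-Neumann condition imposes nothing on $\sigma^t_0$, whose null-space on a strictly pseudoconvex manifold is the infinite-dimensional space of holomorphic sections of $E$. Following~\cite{Epstein4}, the subelliptic modification eliminates this null-space by imposing $\cS'\sigma^t_0|_{bX}=0$, while $\sigma^t_{\geq 2}$ remains unconstrained (the $\bzero$ entry). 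Arranging the three blocks in the order $(\sigma^t_0,\sigma^t_{\geq 2},\sigma^n)$ gives the first displayed matrix.

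The odd case is handled analogously, using the dual (modified) $\dbar$-Neumann condition also defined in~\cite{Epstein4}. On $\Lambda^{\odd}\otimes E$ the splitting~\eqref{7.12.1} now places a degree-$0$ scalar piece into $\sigma^n$ (via $\tfrac{d\bz_1}{\sqrt 2}\wedge f$) and the tangential odd-degree pieces into $\sigma^t$, so I would write $\sigma^n = \sigma^n_0 \oplus \sigma^n_{\geq 2}$. The dual condition sets $\sigma^n_{\geq 2}|_{bX}=0$, replaces the degree-zero Szeg\H o condition by its complementary projector $(\Id-\cS')\sigma^n_0|_{bX}=0$ (consistent with $(\cS')^*=\cS'$ from~\eqref{7.26.4}), and leaves $\sigma^t$ unconstrained; this yields the second displayed matrix. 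The main obstacle throughout is purely bookkeeping---keeping straight the correspondence between tangential/normal components and the degrees of $\Lambda^{0,q}$ under~\eqref{7.12.1}, and matching the chiral sign conventions from~\eqref{7.12.2}---but once the dictionary is fixed, both identities follow by direct substitution into the definitions in~\cite{Epstein4}.
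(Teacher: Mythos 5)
The paper actually offers no proof of this lemma: the remark immediately following Lemma~\ref{lemm5} says explicitly that these formul\ae\ are to be \emph{taken as the definitions} of $\cR^{\prime\eo}_{\pm}$, citing \cite{Epstein4} for the original construction. So there is no ``paper's own proof'' against which to compare yours. What you have written is, in effect, the derivation that lives in \cite{Epstein4}: starting from the $\dbar$-Neumann condition $\dbar\rho\rfloor\sigma|_{bX}=0$, using the normalization $\dbar\rho|_p=-d\bz_1$ from~\eqref{7.2.2} to identify it with $\sigma^n|_{bX}=0$ in the splitting~\eqref{7.12.1}, locating the degree-$0$ null space that makes the problem non-Fredholm, and inserting $\cS'$ (resp.\ $\Id-\cS'$ on the adjoint side) to remove it.

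Your bookkeeping is correct: for $\sigma\in\Lambda^{\even}\otimes E$ the tangential part $\sigma^t$ carries degrees $0,2,\dots$ and $\sigma^n$ carries the odd degrees, so the degree-$0$ holomorphic sections sit in $\sigma^t_0$; for $\sigma\in\Lambda^{\odd}\otimes E$ the roles swap and the degree-$0$ piece sits in $\sigma^n_0$, matching the reversed ordering $(\sigma^n,\sigma^t)$ inherited from~\eqref{7.12.2}. Your remark that the odd-case projector replaces $\sigma^n_0=0$ by the weaker $(\Id-\cS')\sigma^n_0=0$ --- and that this is the relaxation dual to tightening the even side --- is exactly the right mechanism, and the self-adjointness~\eqref{7.26.4} of $\cS'$ is what makes the even and odd projectors compatible under the adjoint relation later established in Theorem~\ref{thm3}. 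The one wording I would clean up: in the odd case you speak of replacing ``the degree-zero Szeg\H o condition by its complementary projector,'' but the classical degree-zero condition being relaxed is $\sigma^n_0=0$, not a Szeg\H o condition; the Szeg\H o projector only enters in the modification. This is cosmetic --- the mathematics underneath is correct, and your proposal is a sound reconstruction of the definition the paper imports.
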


\begin{lemma}\label{lemm5} According to the splittings of sections of $\Lambda^{\eo}\otimes E$
  given in~\eqref{7.12.1}, the subelliptic boundary conditions, defined by the
  generalized conjugate Szeg\H o projector $\bcS',$  on even (odd)
  forms are given by $\cR^{\prime\eo}_-\sigma\restrictedto_{bX}=0$ where, if $n$ is
  even, then
\begin{equation}
\cR^{\prime\even}_-\sigma\restrictedto_{bX}=
\left(\begin{matrix}\begin{matrix}\bzero\end{matrix}&
\begin{matrix} \bzero
\end{matrix}\\
\begin{matrix}\bzero
\end{matrix}&  \begin{matrix} \Id & 0 \\
0 & 1-\bcS'
\end{matrix} 
\end{matrix}\right)
\left[\begin{matrix} \sigma^{t}\\\phantom{\sigma}\\
\sigma^{n}
\end{matrix}\right]_{bX}
\quad
\cR^{\prime\odd}_-\sigma\restrictedto_{bX}=
\left(\begin{matrix}\begin{matrix} \Id 
\end{matrix} &
\begin{matrix} \bzero
\end{matrix}\\
\begin{matrix}\bzero
\end{matrix}&  
\begin{matrix}\bzero & 0\\ 0 &\bcS' \end{matrix} \end{matrix}\right)
\left[\begin{matrix} \sigma^{n}\\\phantom{\sigma}\\
\sigma^{t}
\end{matrix}\right]_{bX}.
\end{equation}
If $n$ is odd, then
\begin{equation}
\cR^{\prime\even}_-\sigma\restrictedto_{bX}=
\left(\begin{matrix}\begin{matrix}\bzero& 0\\ 0&\bcS'\end{matrix}&
\begin{matrix} \bzero
\end{matrix}\\
\begin{matrix}\bzero
\end{matrix}&  \begin{matrix} \Id
\end{matrix} 
\end{matrix}\right)
\left[\begin{matrix} \sigma^{t}\\\phantom{\sigma}\\
\sigma^{n}
\end{matrix}\right]_{bX}
\quad
\cR^{\prime\odd}_-\sigma\restrictedto_{bX}=
\left(\begin{matrix}\begin{matrix} \Id & 0\\
0 &1-\bcS'
\end{matrix} &
\begin{matrix} \bzero
\end{matrix}\\
\begin{matrix}\bzero
\end{matrix}&  
\begin{matrix}\bzero\end{matrix} \end{matrix}\right)
\left[\begin{matrix} \sigma^{n}\\\phantom{\sigma}\\
\sigma^{t}
\end{matrix}\right]_{bX}.
\end{equation}
\end{lemma}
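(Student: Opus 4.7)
The strategy is to start from the original definitions of the subelliptic boundary conditions in the pseudoconcave setting, as given in~\cite{Epstein4}---namely, the dual $\dbar$-Neumann condition modified on the single critical form degree $n-1$ by the generalized conjugate Szeg\H o projector $\bcS'$---and then rewrite those conditions in terms of the splitting $\sigma=\sigma^t+(d\bz_1/\sqrt 2)\wedge\sigma^n$ introduced in~\eqref{7.12.1}. Since the lemma is essentially a bookkeeping identity, the proof is short once one is careful about how the form-degree grading is distributed between $\sigma^t$ and $\sigma^n.$

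The first step is to record, for each spinor parity, which tangential form degrees appear in $\sigma^t$ and which in $\sigma^n.$ For $\sigma\in\Lambda^{\even}\otimes E$ the tangential part $\sigma^t$ occupies the tangential even degrees $0,2,\dots,$ while $\sigma^n$ occupies the tangential odd degrees $1,3,\dots,$ because wedging with $d\bz_1$ shifts parity; the roles are exchanged for $\sigma\in\Lambda^{\odd}\otimes E.$ The critical degree $n-1$ therefore lands in $\sigma^n$ for one spinor parity and in $\sigma^t$ for the other, and which is which depends on whether $n-1$ is even or odd, \ie\ on the parity of $n.$ This dichotomy is precisely the source of the two cases in the statement.

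The second step is to assemble the block matrix. On every tangential degree $q\neq n-1,$ the unmodified dual $\dbar$-Neumann condition imposes the identity on that component of the boundary value, while on degree $n-1$ the identity is replaced by $(1-\bcS')$ (respectively by $\bcS'$ when one is reading off the complementary projection for the odd chirality), so that the range of $\bcS'$ survives in the natural domain. Expressing this prescription on $\sigma^t$ and $\sigma^n$ according to the parity assignment from the first step yields the block matrices displayed in the lemma: the $\Id$ and $\bzero$ entries along the diagonal correspond to tangential degrees different from $n-1,$ and the $(1-\bcS')$ or $\bcS'$ entry corresponds to the critical degree $n-1$ placed in whichever of $\sigma^t$ or $\sigma^n$ contains it.

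The only real obstacle is the bookkeeping: one must track both the parity of the spinor form and the parity of $n,$ and remember that the coefficient $\sigma^n$ of $d\bz_1/\sqrt{2}$ has the opposite tangential parity to $\sigma$ itself. Once these parities are pinned down, each of the four cases (even/odd spinor, $n$ even/odd) reduces to matching the diagonal block matrix entries against the appropriate tangential degrees, and the formul\ae\ in the lemma follow directly from the definition of $\cR^{\prime\eo}_{-}$ in~\cite{Epstein4}.
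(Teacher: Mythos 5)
The paper supplies no proof of this lemma: the remark that follows Lemmas~\ref{lemm4} and~\ref{lemm5} states explicitly that these formul\ae\ are ``taken as the definitions of the projections $\cR^{\prime\eo}_{\pm}$,'' with the boundary conditions themselves credited to~\cite{Epstein4}. Your argument is therefore a derivation the paper declines to give, and the question is whether it is a sound one. The core observation you isolate is indeed the crux: since $\sigma^n$ is the coefficient of $d\bz_1/\sqrt 2$ in a degree-$q$ form, it sits one tangential degree lower than $\sigma^t,$ so the two carry opposite parities, and the critical degree $n-1$ therefore lands in $\sigma^n$ or in $\sigma^t$ according to whether $n$ has the same parity as the spinor or the opposite one. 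That is exactly the dichotomy that produces the two cases of the lemma, and your write-up tracks it correctly. One piece remains loose: the distribution of $\Id$ versus $\bzero$ in the noncritical slots, and whether the critical slot receives $\bcS'$ or $1-\bcS',$ is not determined by parity bookkeeping alone but by the precise phrasing of the dual $\dbar$-Neumann condition and the complementary projection used for the adjoint chirality in~\cite{Epstein4}. Your sketch gestures at this (``reading off the complementary projection'') without pinning it down; since that is the one place where a sign or a complement could be misassigned, the argument should be completed by matching each block against the explicit conditions in the cited source rather than by parity considerations.
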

\begin{remark} These boundary conditions are introduced in~\cite{Epstein4}. For the
  purposes of this paper, these formul\ae\ can be taken as the definitions of
  the projections $\cR^{\prime\eo}_{\pm},$ which, in turn, define the boundary conditions.
\end{remark}

\begin{lemma}
The isomorphisms at the boundary between $\Lambda^{\eo}\otimes E$ and 
$\Lambda^{\ooee}\otimes E$ are given by
\begin{equation}
\sigma_1(\eth^{\eo}_{\pm},\mp i dt)\sigma^t=\frac{\pm }{\sqrt{2}}\sigma^t,\quad
\sigma_1(\eth^{\eo}_{\pm},\mp i dt)\sigma^n=\frac{\mp}{\sqrt{2}}\sigma^n.
\label{7.13.3}
\end{equation}
\end{lemma}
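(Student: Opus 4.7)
The plan is a direct computation: evaluate the formulas for $d_1^{\eo}(\xi)$ from equation~\eqref{7.12.4} at the purely normal covector. In the K\"ahler coordinates normalized at $p$, we have $\rho(z) = -2\Re z_1 + O(|z|^2)$, so $dt|_p = dx_1$ and therefore $\sigma_1(\eth^{\eo}_\pm, \mp i dt)|_p$ is obtained from $d_1^{\eo}(\xi)$ by substituting $\xi_1 = \mp i$ and $\xi_j = 0$ for $j \geq 2$. In particular, $\xi'' = 0$, so $\sd(\xi'') = 0$ and the off-diagonal blocks in \eqref{7.12.4} vanish. The symbol therefore reduces to its diagonal part, which acts separately on $\sigma^t$ and $\sigma^n$.

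For the even case, read off from \eqref{7.12.4} that the $(1,1)$-block of $d_1^{\even}$ becomes $\tfrac{1}{\sqrt{2}}(i\xi_1 - \xi_{n+1})\otimes\Id_{E,n}$ and the $(2,2)$-block becomes $-\tfrac{1}{\sqrt{2}}(i\xi_1 + \xi_{n+1})\otimes\Id_{E,n}$. Evaluating at $\xi_1 = \mp i$, $\xi_{n+1}=0$ gives $\tfrac{\pm 1}{\sqrt{2}}\Id$ on the $\sigma^t$ component and $\tfrac{\mp 1}{\sqrt{2}}\Id$ on the $\sigma^n$ component, as claimed in \eqref{7.13.3}. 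The odd case proceeds identically: the $(1,1)$-block of $d_1^{\odd}$ evaluates to $\tfrac{\mp 1}{\sqrt{2}}\Id$ on $\sigma^n$ (which is the first slot in the odd splitting of \eqref{7.12.2}), and the $(2,2)$-block evaluates to $\tfrac{\pm 1}{\sqrt{2}}\Id$ on $\sigma^t$, again matching \eqref{7.13.3}.

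There is no real obstacle here; this is essentially a bookkeeping exercise. The only place to be careful is reconciling the two sign conventions: the $\pm$ labeling pseudoconvex/pseudoconcave (which, via the contour convention in \eqref{7.9.1}, dictates whether the normal covector entering the Calderon construction is $-i\,dt$ or $+i\,dt$), and the $\mp$ arising from $i\xi_1 = i(\mp i) = \pm 1$. Once these are tracked consistently, the claimed formulas \eqref{7.13.3} drop out directly from \eqref{7.12.4}. Since the computation is purely at the level of principal symbols at a single point, and Darboux/K\"ahler normalization makes the relevant geometry the same at every boundary point, no further argument (and no use of the higher-order geometry of $bX$) is needed.
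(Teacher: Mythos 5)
Your proof is correct and is the only natural computation: substitute the covector $\mp i\,dt$, i.e.\ $\xi_1=\mp i$, $\xi_j=0$ for $j\geq 2$, into the explicit formulas \eqref{7.12.4} for $d_1^{\eo}$, note that $\sd(\xi'')=0$ so the symbol is block-diagonal, and read off $i\xi_1 = \pm 1$. The paper states the lemma without proof, treating it as immediate from \eqref{7.12.2}--\eqref{7.12.4}; your write-up simply makes that bookkeeping explicit and matches the intended argument.
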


We have thus far succeeded in computing the symbols of the Calderon projectors
to high enough order to compute the principal symbols of $\cT^{\eo}_{\pm}$ as
elements of the extended Heisenberg calculus. The computations have been carried
out in a coordinate system adapted to the boundary. This suffices to examine
the classical parts of the symbols. In the next section we further normalize
the coordinates, in order to analyze the Heisenberg symbols.

We close this section by computing the classical parts of the symbols of
$\cT^{\eo}_{\pm}$ and showing that they are invertible on the complement of the
appropriate half of the contact line. Recall that the positive contact ray,
$L^+,$  is given at $p$ by $\xi''=0,\xi_{n+1}<0.$
\begin{proposition}\label{prp8} If $X$ is strictly pseudoconvex, then, on the complement of
  the positive contact direction, the classical symbols
  ${}^R\sigma_0(\cT^{\eo}_+)$ are given by
\begin{equation}
\begin{split}
{}^R\sigma_0(\cT^{\even}_+)(0,\xi')&=
\frac{1}{2|\xi'|}\left(\begin{matrix} (|\xi'|+\xi_{n+1})\Id & -\sd(\xi'')\\
\sd(\xi'') & (|\xi'|+\xi_{n+1})\Id \end{matrix}\right)\\
{}^R\sigma_0(\cT^{\odd}_+)(0,\xi')&=
\frac{1}{2|\xi'|}\left(\begin{matrix} (|\xi'|+\xi_{n+1})\Id & \sd(\xi'')\\
-\sd(\xi'') & (|\xi'|+\xi_{n+1})\Id \end{matrix}\right)
\end{split}
\label{7.13.1}
\end{equation}
These symbols are invertible on the complement of $L^+.$
\end{proposition}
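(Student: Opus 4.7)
The plan is to expand
$$
\cT^{\eo}_+ = \cR^{\prime\eo}_+\cP^{\eo} + (\Id - \cR^{\prime\eo}_+)(\Id - \cP^{\eo})
$$
and extract the classical principal symbol away from $L^+$. The key input is that the generalized Szeg\H{o} projector $\cS'$ has Heisenberg principal symbol $s_{J0} = 2^{1-n}e^{-h_J}$, which is Schwartz in the symplectic fiber direction and supported on $T^*_+Y$; hence on the classical face of $\EHC{Y}$ its symbol vanishes to infinite order off $L^+$, so microlocally on the complement of $L^+$ the operator $\cS'$ is smoothing in the classical sense. Consequently, off $L^+$ the classical principal symbols of $\cR^{\prime\even}_+$ and $\cR^{\prime\odd}_+$ reduce, via Lemma~\ref{lemm4}, to the block-diagonal projectors onto the $\sigma^n$-component of $\Lambda^{\even}$, respectively $\Lambda^{\odd}$.

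Next I compute $\scp^{\eo}_0(0,\xi')$ from~\eqref{7.9.3} by substituting $\xi_1 = i|\xi'|$ into the matrices~\eqref{7.12.4} and composing with $\sigma_1(\eth^{\eo}, -idt)$ from~\eqref{7.13.3}. A direct matrix multiplication yields, for instance in the even case,
$$
\scp^{\even}_0(0,\xi') = \frac{1}{2|\xi'|}\begin{pmatrix} (|\xi'|-\xi_{n+1})\Id & \sd(\xi'') \\ \sd(\xi'') & (|\xi'|+\xi_{n+1})\Id \end{pmatrix},
$$
with an analogous expression in the odd case. Composing on the left with the projectors obtained in the previous paragraph then reproduces exactly the formul\ae~\eqref{7.13.1}.

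For the invertibility statement, write the resulting symbol as $\frac{1}{2|\xi'|} M$ where $M = \begin{pmatrix} a\Id & \mp\sd(\xi'') \\ \pm\sd(\xi'') & a\Id \end{pmatrix}$ and $a = |\xi'|+\xi_{n+1}$. The Clifford-type anticommutation relations $\{e_j,\epsilon_k\} = \delta_{jk}$, $\{e_j,e_k\} = \{\epsilon_j,\epsilon_k\} = 0$ immediately give $\sd(\xi'')^2 = |\xi''|^2\,\Id$, and hence, using $|\xi'|^2 = \xi_{n+1}^2 + |\xi''|^2$,
$$
MM^* = (a^2+|\xi''|^2)\Id = 2|\xi'|(|\xi'|+\xi_{n+1})\Id,
$$
which is strictly positive exactly when $\xi' \notin L^+$. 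The main obstacle is careful bookkeeping: the two chiralities use different orderings for the tangential versus normal parts of $\Lambda^{\even}$ and $\Lambda^{\odd}$ (visible already in the asymmetric placement of $\sigma^n$ and $\sigma^t$ in~\eqref{7.12.2}), and it is the systematic tracking of these conventions that produces the sign difference between the off-diagonal $\sd(\xi'')$-entries in the even and odd cases of~\eqref{7.13.1}. The vanishing of the classical symbol of $\cS'$ off $L^+$ and the Clifford identity for $\sd^2$ are standard, so once the conventions are fixed the computation becomes mechanical.
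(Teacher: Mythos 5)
Your proof is correct and follows essentially the same route as the paper: observe that off $L^+$ the generalized Szeg\H o projector has vanishing classical symbol so that ${}^R\sigma_0(\cR^{\prime\even}_+)$ and ${}^R\sigma_0(\cR^{\prime\odd}_+)$ reduce to the constant block projectors, read off $\scp^{\eo}_0$ from~\eqref{7.9.3},~\eqref{7.12.4},~\eqref{7.13.3}, and multiply out $\cR'\cP + (\Id-\cR')(\Id-\cP)$. Your computed $\scp^{\even}_0$ agrees with the paper's~\eqref{7.14.5}, and the resulting symbols match~\eqref{7.13.1}. The only genuine variation is in the invertibility step: the paper writes the symbol as $\lambda\Id + B$ with $\lambda = |\xi'|+\xi_{n+1}\geq 0$ and $B$ skew-adjoint, then invokes the purely imaginary spectrum of $B$; you instead compute $MM^* = 2|\xi'|(|\xi'|+\xi_{n+1})\Id$ directly from the Clifford identity $\sd(\xi'')^2 = |\xi''|^2\Id$. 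Both are correct and both ultimately lean on the same underlying algebra ($\sd^2 = |\xi''|^2\Id$, which you verify cleanly from the CAR relations $\{e_j,\epsilon_k\}=\delta_{jk}$, $\{e_j,e_k\}=\{\epsilon_j,\epsilon_k\}=0$); yours is marginally more explicit and perhaps easier to check.
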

 \begin{proof} Away from the positive contact direction $\cR^{\prime\eo}_+$ are
   classical pseudodifferential operators with
\begin{equation}
{}^R\sigma_0(\cR^{\prime\even}_+)=\left(\begin{matrix} \bzero &\bzero \\
\bzero &\Id\end{matrix}\right),\quad
{}^R\sigma_0(\cR^{\prime\odd}_+)=\left(\begin{matrix} \Id &\bzero \\
\bzero &\bzero\end{matrix}\right)
\label{7.13.2}
\end{equation}
The formul\ae\ in~\eqref{7.13.1} follow easily from these
relations, along with~\eqref{7.9.3},~\eqref{7.12.4}, and~\eqref{7.13.3}. To
show that these
symbols are invertible away from the positive contact direction, it suffices to
show that their determinants do not vanish. Up to the factor of
$(2|\xi'|)^{-1},$ these symbols are of the form $\lambda\Id+B$ where $\lambda$
is real (and nonnegative) and $B$ is skew-adjoint. As a skew-adjoint matrix has
purely imaginary spectrum, the determinants of these symbols vanish if and
only if $\sd(\xi'')=0$ and $|\xi'|+\xi_{n+1}=0.$ The first condition implies
that $|\xi'|=|\xi_{n+1}|,$ hence these determinant vanish if and only if $\xi'$
belongs to the positive contact ray.
\end{proof}

An essentially identical argument, taking into account the fact that
$\cR^{\prime\eo}_-$ are classical pseudodifferential operators on the
complement of $L^-,$  suffices to treat the pseudoconcave case.
\begin{proposition}\label{prp9} If $X$ is strictly pseudoconcave, then, on the complement of
  the negative contact direction, the classical symbols
  ${}^R\sigma_0(\cT^{\eo}_-)$ are given by
\begin{equation}
\begin{split}
{}^R\sigma_0(\cT^{\even}_-)(0,\xi')&=
\frac{1}{2|\xi'|}\left(\begin{matrix} (|\xi'|-\xi_{n+1})\Id & \sd(\xi'')\\
-\sd(\xi'') & (|\xi'|-\xi_{n+1})\Id \end{matrix}\right)\\
{}^R\sigma_0(\cT^{\odd}_-)(0,\xi')&=
\frac{1}{2|\xi'|}\left(\begin{matrix} (|\xi'|-\xi_{n+1})\Id & -\sd(\xi'')\\
\sd(\xi'') & (|\xi'|-\xi_{n+1})\Id \end{matrix}\right)
\end{split}
\label{7.13.4}
\end{equation}
These symbols are invertible  on the complement of $L^-.$
\end{proposition}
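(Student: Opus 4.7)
The plan is to mirror the argument for Proposition~\ref{prp8}, replacing the contour $\Gamma_+(\xi_1)$ in the upper half plane with the negatively oriented contour $\Gamma_-(\xi_1)$ in the lower half plane, and substituting the pseudoconcave projectors $\cR^{\prime\eo}_-$ from Lemma~\ref{lemm5} in place of $\cR^{\prime\eo}_+.$ Off of $L^-,$ the projectors $\cR^{\prime\eo}_-$ are classical pseudodifferential operators whose principal symbols, obtained by replacing $\cS'$ (resp.~$\bcS'$) by its classical principal symbol (which is $0$ away from the appropriate contact direction), reduce to block projectors analogous to those in~\eqref{7.13.2}, modulo the parity of $n.$ The parity dependence affects which block of the splitting $\sigma=\sigma^t+\tfrac{d\bz_1}{\sqrt 2}\wedge\sigma^n$ carries the surviving identity, but in every case the principal symbol is a block orthogonal projection.

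First I would evaluate the principal symbol of $\cP^{\eo}_-$ by applying the residue calculation of~\eqref{7.9.1} to the expression~\eqref{7.9.5}; the pole now enclosed is $\xi_1=-i|\xi'|,$ so~\eqref{7.9.3} yields $d_1^{\ooee}(-i|\xi'|,\xi')/|\xi'|$ composed with $\sigma_1(\eth^{\eo},+idt).$ Inserting the explicit formulas~\eqref{7.12.4} shows that evaluation at $\xi_1=-i|\xi'|$ replaces the sum $|\xi'|+\xi_{n+1}$ appearing in the pseudoconvex case by the difference $|\xi'|-\xi_{n+1},$ while the off-diagonal block $\sd(\xi'')$ is unchanged (up to an overall sign depending on parity). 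Composing with the symbol of $\cR^{\prime\eo}_-$ and with $\sigma_1(\eth^{\eo}_-,+idt)$ according to~\eqref{7.13.3}, applied separately on $\sigma^t$ and $\sigma^n,$ produces the matrices in~\eqref{7.13.4}. The sign reversal in the off-diagonal entries between the even and odd cases comes from the fact that $d_1^{\even}$ and $d_1^{\odd}$ carry opposite signs on their $\sd(\xi'')$ blocks, together with the opposite projection pattern of $\cR^{\prime\even}_-$ and $\cR^{\prime\odd}_-.$

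For the invertibility statement, I would observe that, modulo the scalar $(2|\xi'|)^{-1},$ both matrices have the form $\lambda\,\Id+B$ where $\lambda=|\xi'|-\xi_{n+1}\in\bbR_{\geq 0}$ on the support and $B$ is built from $\sd(\xi'')$ with opposite signs on the two off-diagonal blocks. Since $\sd(\xi'')^*=\sd(\xi'')$ (as noted after~\eqref{7.12.4}), the block $B$ is skew-adjoint, hence has purely imaginary spectrum, so $\det(\lambda\Id+B)$ vanishes iff $\lambda=0$ and $\sd(\xi'')=0.$ These two conditions force $\xi''=0$ and $\xi_{n+1}=|\xi'|>0,$ i.e.\ $\xi'\in L^-.$ This gives invertibility on the complement of the negative contact line.

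The main bookkeeping obstacle is tracking the parity of $n$ through Lemma~\ref{lemm5}: the identity block in $\cR^{\prime\eo}_-$ sits in a different slot depending on whether $n$ is even or odd, and one must check that in each case the resulting composition lands in the block structure displayed in~\eqref{7.13.4}. I expect this is purely a matter of permuting block indices: the underlying algebraic identities are insensitive to the parity, and the invertibility argument via the skew-adjointness of the off-diagonal block goes through verbatim. Everything else is a routine adaptation of the pseudoconvex proof.
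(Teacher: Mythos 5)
Your proposal is correct and follows the same route the paper takes: the paper's ``proof'' of Proposition~\ref{prp9} is literally the single sentence preceding it (``An essentially identical argument, taking into account the fact that $\cR^{\prime\eo}_-$ are classical pseudodifferential operators on the complement of $L^-$, suffices to treat the pseudoconcave case''), and your write-up is exactly a careful expansion of that sentence.

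Two small imprecisions worth noting, neither of which affects correctness. First, the parity of $n$ plays \emph{no} role at the level of classical symbols: since $\bcS'$ (being a Heisenberg operator of order $0$ microsupported at $L^-$) has vanishing classical principal symbol away from $L^-$, both of the Lemma~\ref{lemm5} matrices for $\cR^{\prime\even}_-$ reduce to $\left(\begin{smallmatrix}\bzero&\bzero\\\bzero&\Id\end{smallmatrix}\right)$ and both matrices for $\cR^{\prime\odd}_-$ reduce to $\left(\begin{smallmatrix}\Id&\bzero\\\bzero&\bzero\end{smallmatrix}\right)$ --- identical to~\eqref{7.13.2}. So there is no block-permutation bookkeeping to track; the parity distinction only bites in the Heisenberg regime, which is off-stage here. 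Second, your description of the residue step as ``replacing $|\xi'|+\xi_{n+1}$ by $|\xi'|-\xi_{n+1}$'' is a slight oversimplification: evaluating $d_1^{\ooee}$ at $\xi_1=-i|\xi'|$ instead of $+i|\xi'|$ swaps and negates the two diagonal entries, and it is only after composing with the sign-flipped $\sigma_1(\eth^{\eo}_-,+idt)=\text{diag}(-\tfrac{1}{\sqrt2},\tfrac{1}{\sqrt2})$ that the clean $|\xi'|\mp\xi_{n+1}$ pattern of~\eqref{7.14.7}--\eqref{7.14.8} emerges. The final invertibility argument via $\lambda\Id+B$ with $\lambda=|\xi'|-\xi_{n+1}\geq0$ and $B$ skew-adjoint is exactly right (the inequality $\lambda\geq 0$ holds identically, not just ``on the support''), and the vanishing locus $\{\xi''=0,\;\xi_{n+1}=|\xi'|>0\}$ is precisely $L^-$.
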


\begin{remark} Propositions~\ref{prp8} and~\ref{prp9}  are classical and
  implicitly stated, for example, in the work of Greiner and Stein, and Beals
  and Stanton, see~\cite{Beals-Stanton,Greiner-Stein1}.
\end{remark}

\section{The Heisenberg symbols of $\cT^{\eo}_{\pm}$}
To compute the Heisenberg symbols of $\cT^{\eo}_{\pm}$ we change coordinates,
one last time, to get Darboux coordinates at $p.$ Up to this point we have used
the coordinates $(\xi_2,\dots,\xi_{2n})$ for $T^*_pbX,$ which are defined by
the coframe $dx_2,\dots,dx_{2n},$ with $dx_{n+1}$ the contact direction. Recall
that the contact form $\theta,$ defined by the complex structure and defining
function $\rho/2,$ is given by $\theta=\frac{i}{2}\dbar\rho.$ The symplectic
form on $H$ is defined by $d\theta.$ At $p$ we have
\begin{equation}
\theta_p=-\frac{1}{2}dx_{n+1},\quad d\theta_p=\sum_{j=2}^{n}dx_j\wedge
dx_{j+n}.
\label{7.14.1}
\end{equation}
By comparison with~\eqref{7.14.3}, we see that properly normalized coordinates
for $T^*_pbX$ are obtained by setting
\begin{equation}
\eta_0=-2\xi_{n+1},\quad \eta_j=\xi_{j+1},\quad
\eta_{j+n-1}=\xi_{j+n+1}\text{ for }j=1,\dots,n-1.
\end{equation}
As usual we let $\eta'=(\eta_1,\dots,\eta_{2(n-1)});$ whence $\xi''=\eta'.$

As a first step in lifting the symbols of the Calderon projectors to the
extended Heisenberg compactification, we re-express them, through order $-1$ in
the $\xi$-coordinates:
\begin{equation}
\scp^{\even}_+(\xi')=
\frac{1}{2|\xi'|}\left[
\left(\begin{matrix} (|\xi'|-\xi_{n+1})\Id & \sd(\xi'')\\\sd(\xi'')&
(|\xi'|+\xi_{n+1})\Id\end{matrix}\right)-(n-1)\left(\begin{matrix}\Id &\bzero\\
\bzero&\Id\end{matrix}\right)\right]
\label{7.14.5}
\end{equation}

\begin{equation}
\scp^{\odd}_+(\xi')=
\frac{1}{2|\xi'|}\left[
\left(\begin{matrix} (|\xi'|+\xi_{n+1})\Id & \sd(\xi'')\\\sd(\xi'')&
(|\xi'|-\xi_{n+1})\Id\end{matrix}\right)-(n-1)\left(\begin{matrix}\Id &\bzero\\
\bzero&\Id\end{matrix}\right)\right]
\label{7.14.6}
\end{equation}

\begin{equation}
\scp^{\even}_-(\xi')=
\frac{1}{2|\xi'|}\left[
\left(\begin{matrix} (|\xi'|+\xi_{n+1})\Id & -\sd(\xi'')\\-\sd(\xi'')&
(|\xi'|-\xi_{n+1})\Id\end{matrix}\right)+(n-1)\left(\begin{matrix}\Id &\bzero\\
\bzero&\Id\end{matrix}\right)\right]
\label{7.14.7}
\end{equation}

\begin{equation}
\scp^{\odd}_-(\xi')=
\frac{1}{2|\xi'|}\left[
\left(\begin{matrix} (|\xi'|-\xi_{n+1})\Id & -\sd(\xi'')\\-\sd(\xi'')&
(|\xi'|+\xi_{n+1})\Id\end{matrix}\right)+(n-1)\left(\begin{matrix}\Id &\bzero\\
\bzero&\Id\end{matrix}\right)\right]
\label{7.14.8}
\end{equation}
Various identity and zero matrices appear in these symbolic
computations. Precisely which matrix is needed depends on the dimension, the
bundle $E,$ the parity, etc. We do not encumber our notation with these
distinctions.

In order to compute ${}^H\sigma(\cT^{\eo}_{\pm}),$ we represent the Heisenberg
symbols as model operators and use operator composition.  To that end we need
to quantize $\sd(\eta')$ as well as the terms coming from the diagonals
in~\eqref{7.14.5}--~\eqref{7.14.8}.  We first treat the pseudoconvex side. In
this case we need to consider the symbols on positive Heisenberg face, where
the function $|\xi'|+\xi_{n+1}$ vanishes. 

We express the various terms in $\scp^{\eo}_{+},$ near the positive contact line
as sums of Heisenberg homogeneous terms
\begin{equation}
\begin{split}
|\xi'|=&\frac{\eta_0}{2}(1+\sO_{-2}^H)\\
 |\xi'|-\xi_{n+1}=\eta_0(1+\sO_{-2}^H),\quad&
|\xi'|+\xi_{n+1}=\frac{|\eta'|^2}{\eta_0}(1+\sO_{-2}^H)\\
\sd(\xi'')=\sum_{j=1}^{n-1}[(i\eta_j+\eta_{n+j-1})e_j&-(i\eta_j-\eta_{n+j-1})\epsilon_j].
\end{split}
\label{7.14.9}
\end{equation}
Recall that the notation $\sO_j^H$ denotes a term of Heisenberg order at most
$j.$ To find the model operators, we split $\eta'=(w,\varphi).$ Using the
quantization rule in~\eqref{7.7.3} (with the $+$ sign) we see that
\begin{equation}
\begin{split}
\eta_j-i\eta_{n+j-1}&\leftrightarrow C_j\overset{d}{=}(w_j-\pa_{w_j})\\
\eta_j+i\eta_{n+j-1}&\leftrightarrow C_j^*\overset{d}{=}(w_j+\pa_{w_j})\\
|\eta'|^2&\leftrightarrow \ho\overset{d}{=}\sum_{j=1}^{n-1}w_j^2-\pa_{w_j}^2.
\end{split}
\label{7.14.10}
\end{equation}
The following standard identities are  useful
\begin{equation}
\sum_{j=1}^{n-1}C_j^*C_j-(n-1)=\ho=\sum_{j=1}^{n-1}C_jC_j^*+(n-1)
\label{7.14.11}
\end{equation}
We let $\cD_+$ denote the model operator defined, using the $+$ quantization,
by $\sd(\xi''),$ it is given by
\begin{equation}
\cD_+=i\sum_{j=1}^{n-1}[C_j e_j-C_j^*\epsilon_j].
\end{equation}
This is the model operator defined by $\dbarb+\dbarb^*$ acting on
$\oplus_q\Lambda_b^{0,q}\otimes E.$ This operator can be split into even and odd parts,
$\cD^{\eo}_+$ and these chiral forms of the operator are what appear in the
model operators below. To keep the notation from becoming too complicated we
suppress this dependence.

With these preliminaries, we can compute the model operators for $\cP^{\even}_+$
and $\Id-\cP^{\even}_+$ in the positive contact direction. They are:
\begin{equation}
{}^{eH}\sigma(\cP^{\even}_+)(+)=\left(\begin{matrix} \Id &\frac{\cD_+}{\eta_0}\\\frac{\cD_+}{\eta_0}
  &\frac{\ho-(n-1)}{\eta_0^2}\end{matrix}\right)\quad
{}^{eH}\sigma(\Id-\cP^{\even}_+)(+)=\left(\begin{matrix} \frac{\ho+n-1}{\eta_0^2} 
&-\frac{\cD_+}{\eta_0}\\-\frac{\cD_+}{\eta_0} &\Id\end{matrix}\right).
\label{7.14.12}
\end{equation}
The denominators involving $\eta_0$ are meant to remind the reader of the
Heisenberg orders of the various blocks: $\eta_0^{-1}$ indicates a term of
Heisenberg order $-1$ and $\eta_0^{-2}$ a term of order $-2.$ Similar
computations give the model operators in the odd case:
\begin{equation}
{}^{eH}\sigma(\cP^{\odd}_+)(+)=\left(\begin{matrix} \frac{\ho-(n-1)}{\eta_0^2}  &\frac{\cD_+}{\eta_0}\\\frac{\cD_+}{\eta_0}
  &\Id\end{matrix}\right)\quad
{}^{eH}\sigma(\Id-\cP^{\odd}_+)(+)=\left(\begin{matrix} \Id
&-\frac{\cD_+}{\eta_0}\\-\frac{\cD_+}{\eta_0} &\frac{\ho+n-1}{\eta_0^2} \end{matrix}\right).
\label{7.14.13}
\end{equation}
Let $\sypr_0'={}^{eH}\sigma(+)(\cS');$ this is a self adjoint rank one
projection defined by a compatible almost complex structure on $H,$ then
\begin{equation}
{}^{eH}\sigma(\cR^{\prime\even}_{+})(+)=
\left(\begin{matrix}\begin{matrix} \sypr'_0 & 0 \\
0 & \bzero\\
\end{matrix} &
\begin{matrix} \bzero
\end{matrix}\\
\begin{matrix}\bzero
\end{matrix}&  
\begin{matrix}\Id \end{matrix} \end{matrix}\right),\quad
{}^{eH}\sigma(\cR^{\prime\odd})(+)=
\left(\begin{matrix}\begin{matrix} 1-\sypr'_0 & 0 \\
0 & \Id
\end{matrix} &
\begin{matrix} \bzero
\end{matrix}\\
\begin{matrix}\bzero
\end{matrix}&  
\begin{matrix}\bzero \end{matrix} \end{matrix}\right).
\end{equation}
We can now compute the model operators for $\cT^{\eo}_+$
on the upper Heisenberg face.
\begin{proposition} If $X$ is strictly pseudoconvex, then, at $p\in bX,$ the
  model operators for $\cT^{\eo}_+,$ in the positive contact direction, are given by
\begin{equation}
{}^{eH}\sigma(\cT^{\even}_+)(+)=
\left(\begin{matrix}\begin{matrix} \sypr'_0 & 0 \\
0 & \bzero\\
\end{matrix} &\begin{matrix}-\left[\begin{matrix} 1-2\sypr'_0 & 0 \\
0 &\Id
\end{matrix}\right]\frac{\cD_+}{\eta_0}
\end{matrix}
\\
\begin{matrix} \frac{\cD_+}{\eta_0}
\end{matrix}&  
\begin{matrix} \frac{\ho-(n-1)}{\eta_0^2} \end{matrix} \end{matrix}\right)
\label{7.14.21}
\end{equation}
\begin{equation}
{}^{eH}\sigma(\cT^{\odd}_+)(+)=
\left(\begin{matrix}\begin{matrix} \sypr'_0 & 0 \\
0 & \bzero\\
\end{matrix} &\begin{matrix}\left[\begin{matrix} 1-2\sypr'_0 & 0 \\
0 &\Id
\end{matrix}\right]\frac{\cD_+}{\eta_0}
\end{matrix}
\\
\begin{matrix} -\frac{\cD_+}{\eta_0}
\end{matrix}&  
\begin{matrix} \frac{\ho+(n-1)}{\eta_0^2} \end{matrix} \end{matrix}\right).
\label{7.14.22}
\end{equation}
\end{proposition}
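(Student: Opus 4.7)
The plan is to prove the proposition by a direct computation of a $2\times 2$ block matrix product at the Heisenberg principal symbol level. By definition $\cT^{\eo}_+ = \cR^{\prime\eo}_+ \cP^{\eo}_+ + (\Id - \cR^{\prime\eo}_+)(\Id - \cP^{\eo}_+)$, and this identity passes through to the Heisenberg principal symbol, where $\sharp_+$-composition at the model level reduces to operator composition of the matrix-valued isotropic operators on $\cS(\bbR^{n-1})$. The inputs are the block-diagonal symbols of $\cR^{\prime\eo}_+$ and $\Id - \cR^{\prime\eo}_+$ displayed just before the proposition, together with the block symbols of $\cP^{\eo}_+$ and $\Id - \cP^{\eo}_+$ from \eqref{7.14.12}--\eqref{7.14.13}.

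I would first handle the even case. Set $P = \operatorname{diag}(\sypr'_0,\bzero)$ acting on the tangential part, so that $\cR^{\prime\even}_+$ has the $2\times 2$ form $\operatorname{diag}(P,\Id)$ in the (tangential, normal) decomposition and $\Id - \cR^{\prime\even}_+$ has the form $\operatorname{diag}(\Id - P,\bzero)$. Because both are block-diagonal, the off-diagonal blocks of $\cP^{\even}_+$ never couple across into the diagonal blocks of the product $\cT^{\even}_+$. A short matrix multiplication then gives
\begin{equation*}
\cT^{\even}_+ = \begin{pmatrix} P + (\Id - P)(\ho + n - 1)/\eta_0^2 & (2P - \Id)\cD_+/\eta_0 \\ \cD_+/\eta_0 & (\ho - (n-1))/\eta_0^2 \end{pmatrix}.
\end{equation*}
The $(1,2)$, $(2,1)$, and $(2,2)$ entries are already in the form claimed in \eqref{7.14.21}, using $(2P - \Id) = -(\Id - 2P)$. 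In the $(1,1)$ block, whose principal Heisenberg order is $0$, the term $P$ is of order $0$ while the correction $(\Id - P)(\ho + n - 1)/\eta_0^2$ is of order $-2$; hence the principal symbol at $(1,1)$ is simply $P$, reproducing \eqref{7.14.21}.

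The odd case I would treat in essentially the same way, with two bookkeeping adjustments. First, the splitting convention \eqref{7.12.2} for $\eth^{\odd}$ orders the column as $(\sigma^n,\sigma^t)^T$, so the $2\times 2$ block structure is (normal, tangential) and the relevant projector $P = \operatorname{diag}(\sypr'_0,\bzero)$ now occupies the upper-left corner of $\Id - \cR^{\prime\odd}_+$. Second, the symbols \eqref{7.14.13} place $(\ho - (n-1))/\eta_0^2$ in the $(1,1)$-entry of $\cP^{\odd}_+$ and $(\ho + n - 1)/\eta_0^2$ in the $(2,2)$-entry of $\Id - \cP^{\odd}_+$, with the sign of $\cD_+/\eta_0$ in the $(2,1)$-entry accounting for the $-\cD_+/\eta_0$ in \eqref{7.14.22}. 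Performing the analogous matrix multiplication and keeping only the top Heisenberg order in each block yields \eqref{7.14.22}.

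The main obstacle is pure bookkeeping rather than any substantive difficulty: I must keep straight which block carries which principal Heisenberg order and recognize that the apparent $(\Id - P)(\ho \pm (n-1))/\eta_0^2$ contribution in the $(1,1)$ corner lies strictly below the principal order of that block and so does not appear in the model operator. Once these conventions are nailed down the proof is an elementary matrix product using the symbols already computed in the preceding sections.
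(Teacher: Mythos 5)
Your proposal is correct and follows essentially the same route as the paper: take the block model operators for $\cR^{\prime\eo}_+$ and for $\cP^{\eo}_+$ (and their complements) already assembled in equations~\eqref{7.14.12}--\eqref{7.14.13}, multiply them as matrices at the model-operator level where $\sharp_+$ becomes operator composition, and then discard the contributions that fall below the stated Heisenberg order of each block. The paper's own proof is condensed to citing the block-order pattern and Proposition~\ref{prop6} (which guarantees that the lower-order classical terms of $\cP^{\eo}_+$ cannot disturb the displayed principal parts); your argument relies on the same Proposition implicitly by taking \eqref{7.14.12}--\eqref{7.14.13} as the definitive model operators, and just spells out the matrix algebra.
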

\begin{proof} Observe that the Heisenberg orders of the blocks
  in~\eqref{7.14.21} and~\eqref{7.14.22} are
\begin{equation}
\left(\begin{matrix} 0 & -1 \\-1 & -2\end{matrix}\right).
\end{equation}
Proposition~\ref{prop6} shows that all other terms in the symbol of the
Calderon projector lead to diagonal terms of Heisenberg order at most $-4,$ and
off diagonal terms of order at most $-2.$ This, along
with the computations above, completes the proof of the proposition.
\end{proof}

A similar analysis applies for the pseudoconcave case. Here we use that, near
the negative contact line, we have
\begin{equation}
\begin{split}
|\xi'|=&-\frac{\eta_0}{2}(1+\sO_{-2}^{H})\\
 |\xi'|+\xi_{n+1}=-\eta_0(1+\sO_{-2}^H),\quad&
|\xi'|-\xi_{n+1}=-\frac{|\eta'|^2}{\eta_0}(1+\sO_{-2}^H)
\end{split}
\label{7.14.99}
\end{equation}
The formula for $\sd(\xi'')$ is the same, however, the quantization rule is
slightly different, note the $\pm$ in equation~\eqref{7.7.3}. Using the $-$
sign we get the following quantizations:
\begin{equation}
\begin{split}
\eta_j-i\eta_{n+j-1}&\leftrightarrow C_j^*\overset{d}{=}(w_j+\pa_{w_j})\\
\eta_j+i\eta_{n+j-1}&\leftrightarrow C_j\overset{d}{=}(w_j-\pa_{w_j})\\
|\eta'|^2&\leftrightarrow \ho\overset{d}{=}\sum_{j=1}^{n-1}w_j^2-\pa_{w_j}^2.
\end{split}
\label{7.14.20}
\end{equation}
With the $-$ sign we therefore obtain that the model operator defined by
$\sd(\xi'')$ is
\begin{equation}
\cD_-=i\sum_{j=1}^{n-1}[C_j^* e_j-C_j\epsilon_j].
\end{equation}
Using computations identical to those above, we find
that the model operators for $\cP^{\eo}_-$ and $\Id-\cP^{\eo}_{-},$ along the
negative contact direction are:
\begin{equation}
\begin{split}
&\esym{eH}{}(\cP^{\even}_-)(-)=\left(\begin{matrix} \Id & \frac{\cD_-}{|\eta_0|}\\
\frac{\cD_-}{|\eta_0|} & \frac{\ho+ (n-1)}{|\eta_0|^2}\end{matrix}\right)\quad
\esym{eH}{}(\Id-\cP^{\even}_-)(-)=\left(\begin{matrix}
  \frac{\ho-(n-1)}{|\eta_0|^2} & 
-\frac{\cD_-}{|\eta_0|}\\ -\frac{\cD_-}{|\eta_0|} &\Id \end{matrix}\right)\\
&\esym{eH}{}(\cP^{\odd}_-)(-)=\left(\begin{matrix} \frac{\ho+(n-1)}{|\eta_0|^2}
& \frac{\cD_-}{|\eta_0|}\\
\frac{\cD_-}{|\eta_0|} & \Id\end{matrix}\right)\quad
\esym{eH}{}(\Id-\cP^{\odd}_-)(-)=\left(\begin{matrix}  \Id & -\frac{\cD_-}{|\eta_0|}\\ 
-\frac{\cD_-}{|\eta_0|} &\frac{\ho-(n-1)}{|\eta_0|^2}\end{matrix}\right).
\end{split}
\label{7.15.1}
\end{equation}
The Heisenberg orders of the various blocks are indicated by powers of
$|\eta_0|,$ as we evaluate the symbols along the hyperplane $\eta_0=-1$ to
obtain the model operators. Let $\syprc'_0$ denote the rank one projection, which
is the principal symbol of $\bcS'.$ If $n$ is even, then

\begin{equation}
\esym{eH}{}(\cR^{\prime\even}_-)(-)=
\left(\begin{matrix}\begin{matrix}\bzero\end{matrix}&
\begin{matrix} \bzero
\end{matrix}\\
\begin{matrix}\bzero
\end{matrix}&  \begin{matrix} \Id & 0 \\
0 & 1-\syprc_0'
\end{matrix} 
\end{matrix}\right)
\quad
\esym{eH}{}(\cR^{\prime\odd}_-)(-)=
\left(\begin{matrix}\begin{matrix} \Id 
\end{matrix} &
\begin{matrix} \bzero
\end{matrix}\\
\begin{matrix}\bzero
\end{matrix}&  
\begin{matrix}\bzero & 0\\ 0 &\syprc_0' \end{matrix} \end{matrix}\right).
\end{equation}
If $n$ is odd, then
\begin{equation}
\esym{eH}{}(\cR^{\prime\even}_-)(-)=
\left(\begin{matrix}\begin{matrix}\bzero& 0\\ 0&\syprc_0'\end{matrix}&
\begin{matrix} \bzero
\end{matrix}\\
\begin{matrix}\bzero
\end{matrix}&  \begin{matrix} \Id
\end{matrix} 
\end{matrix}\right)
\quad
\esym{eH}{}(\cR^{\prime\odd}_-)(-)=
\left(\begin{matrix}\begin{matrix} \Id & 0\\
0 &1-\syprc_0'
\end{matrix} &
\begin{matrix} \bzero
\end{matrix}\\
\begin{matrix}\bzero
\end{matrix}&  
\begin{matrix}\bzero\end{matrix} \end{matrix}\right).
\end{equation}

\begin{proposition} If $X$ is strictly pseudoconcave, then at $p\in bX,$ the
  model operators for $\cT^{\eo}_-,$ in the negative  contact direction, are
  given, for $n$ even by,
\begin{equation}
{}^{eH}\sigma(\cT^{\even}_-)(-)=
\left(\begin{matrix} \frac{\ho-(n-1)}{|\eta_0|^2} & \begin{matrix} -\frac{\cD_-}{|\eta_0|}
\end{matrix}
\\
\begin{matrix} \left[\begin{matrix} \Id & 0 \\
0 &1-2\syprc_0'\end{matrix}\right]\frac{\cD_-}{|\eta_0|}
\end{matrix}&  
 \begin{matrix} \bzero & 0 \\
0 &\syprc_0'\end{matrix}\end{matrix}\right)
\label{7.15.5}
\end{equation}
\begin{equation}
{}^{eH}\sigma(\cT^{\odd}_-)(-)=
\left(\begin{matrix} \frac{\ho+(n-1)}{|\eta_0|^2} & \begin{matrix} \frac{\cD_-}{|\eta_0|}
\end{matrix}
\\
\begin{matrix} -\left[\begin{matrix} \Id & 0 \\
0 &1-2\syprc_0'\end{matrix}\right]\frac{\cD_-}{|\eta_0|}
\end{matrix}&  
 \begin{matrix} \bzero & 0 \\
0 &\syprc_0'\end{matrix}\end{matrix}\right)
\label{7.15.6}
\end{equation}
If $n$ is odd, then
\begin{equation}
{}^{eH}\sigma(\cT^{\even}_-)(-)=
\left(\begin{matrix}\begin{matrix} \bzero & 0 \\
0 & \syprc_0'\\
\end{matrix} &\begin{matrix}-\left[\begin{matrix} \Id & 0 \\
0 &1-2\syprc_0'
\end{matrix}\right]\frac{\cD_-}{|\eta_0|}
\end{matrix}
\\
\begin{matrix} \frac{\cD_-}{|\eta_0|}
\end{matrix}&  
\begin{matrix} \frac{\ho+(n-1)}{|\eta_0|^2} \end{matrix} \end{matrix}\right)
\label{7.15.7}
\end{equation}
\begin{equation}
{}^{eH}\sigma(\cT^{\odd}_-)(-)=
\left(\begin{matrix}\begin{matrix}\bzero & 0 \\
0 &  \syprc_0'\\
\end{matrix} &\begin{matrix}\left[\begin{matrix} \Id & 0 \\
0 &1-2\syprc_0'
\end{matrix}\right]\frac{\cD_-}{|\eta_0|}
\end{matrix}
\\
\begin{matrix} -\frac{\cD_-}{|\eta_0|}
\end{matrix}&  
\begin{matrix} \frac{\ho-(n-1)}{|\eta_0|^2} \end{matrix} \end{matrix}\right)
\label{7.15.8}
\end{equation}
\end{proposition}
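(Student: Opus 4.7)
The plan is to mirror the proof of the pseudoconvex analogue. I would substitute the explicit model operators for $\cP^{\eo}_-$ and $\Id-\cP^{\eo}_-$ from~\eqref{7.15.1}, together with the model operators for $\cR^{\prime\eo}_-$ displayed just above the proposition, into the defining identity
\begin{equation*}
\cT^{\prime\eo}_- = \cR^{\prime\eo}_-\cP^{\eo}_- + (\Id - \cR^{\prime\eo}_-)(\Id - \cP^{\eo}_-).
\end{equation*}
Since composition of principal Heisenberg symbols on the negative face corresponds, via~\eqref{7.7.3} with the $-$ sign, to ordinary operator composition on $\cS(\bbR^{n-1})$, the calculation reduces to a $2\times 2$ block-matrix multiplication with isotropic entries. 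The only algebraic ingredient beyond matrix arithmetic is the standard identity~\eqref{7.14.11}, which is responsible for the appearance of $\ho \pm (n-1)$ in the corner blocks.

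Next I would carry out the block multiplication in the four cases. For $n$ even the conjugate Szeg\H o projector $\syprc'_0$ sits in the normal-form slot, so the $(2,2)$ block of $\cR^{\prime\even}_-$ is $\operatorname{diag}(\Id,1-\syprc'_0)$ and that of $\cR^{\prime\odd}_-$ involves $\operatorname{diag}(\bzero,\syprc'_0)$. In each product, the $(1,1)$ block of $(\Id-\cR^{\prime\eo}_-)(\Id-\cP^{\eo}_-)$ contributes the diagonal term $(\ho \mp (n-1))/|\eta_0|^2$, while the $(2,2)$ block of $\cR^{\prime\eo}_-\cP^{\eo}_-$ collapses to $\operatorname{diag}(\bzero,\syprc'_0)$ after using $\syprc'_0(1-\syprc'_0)=0$. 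The off-diagonal blocks combine, slot by slot, via the identity $(1-\syprc'_0)-\syprc'_0 = 1-2\syprc'_0$, which produces the prefactor $\operatorname{diag}(\Id,1-2\syprc'_0)$ appearing in~\eqref{7.15.5}--\eqref{7.15.6}. For $n$ odd, $\syprc'_0$ sits in the tangential slot instead, and the same mechanism yields~\eqref{7.15.7}--\eqref{7.15.8}, with the diagonal blocks exchanged.

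Finally, I would invoke Proposition~\ref{prop6} to confirm that contributions from terms of the symbol of $Q^{\eo}$ below $q_{-1}$ produce diagonal entries of Heisenberg order at most $-4$ and off-diagonal entries of order at most $-2$. Since the block structure of the leading part has orders $\left(\begin{smallmatrix} -2 & -1\\ -1 & 0\end{smallmatrix}\right)$ (with diagonal entries interchanged in the $n$ odd cases), these lower-order contributions cannot enter the principal Heisenberg symbol. The main obstacle is purely combinatorial bookkeeping: tracking which summand of $\Lambda^{\eo}\otimes E\restrictedto_{bX}$ carries the projector $\syprc'_0$ under each of the four parity combinations, and verifying the signs on the off-diagonal factor $\pm\operatorname{diag}(\Id,1-2\syprc'_0)\cD_-/|\eta_0|$, which change between the even and odd Dirac parts because $d_1^{\even}$ and $d_1^{\odd}$ in~\eqref{7.12.4} carry opposite signs on the $\sd(\xi'')$ entries.
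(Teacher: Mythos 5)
Your plan — substitute the model operators for $\cP^{\eo}_-$, $\Id-\cP^{\eo}_-$ from~\eqref{7.15.1} and for $\cR^{\prime\eo}_-$ into $\cT^{\prime\eo}_-=\cR^{\prime\eo}_-\cP^{\eo}_-+(\Id-\cR^{\prime\eo}_-)(\Id-\cP^{\eo}_-)$, carry out the block multiplication in the isotropic algebra, and invoke Proposition~\ref{prop6} to discard lower-order contributions — is exactly the paper's (terse) argument, and it does produce~\eqref{7.15.5}--\eqref{7.15.8}. Two small slips in your commentary, neither affecting the outcome: the leading $\operatorname{diag}(\bzero,\syprc'_0)$ in the order-$0$ corner comes from whichever of $\cR'\cP$ or $(\Id-\cR')(\Id-\cP)$ pairs the $\Id$-entry of the Calderon projector with $\syprc'_0$ (no identity $\syprc'_0(1-\syprc'_0)=0$ is needed), and the $\pm$ flip on the off-diagonal factor is forced by the differing block placement of $\syprc'_0$ in $\cR^{\prime\even}_-$ versus $\cR^{\prime\odd}_-$, not by the sign of $\sd(\xi'')$ in $d_1^{\eo}$ — both $\cP^{\even}_-$ and $\cP^{\odd}_-$ carry the same sign there.
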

\begin{proof} 
In the even case the Heisenberg orders of the blocks are
\begin{equation}
\left(\begin{matrix} -2 & -1\\-1&0\end{matrix}\right),
\end{equation}
while in the odd case they are
\begin{equation}
\left(\begin{matrix} 0 & -1\\-1&-2\end{matrix}\right).
\end{equation}
As before, the proposition follows from this observation, the  computations
above, and Proposition~\ref{prop6}.
\end{proof}

This brings us to the main technical result in this paper.
\begin{theorem}\label{thm1} If $X$ is strictly pseudoconvex (pseudoconcave),
  $E\to X$ a compatible complex vector bundle and $\cS'$ ($\bcS'$) a
  generalized (conjugate) Szeg\H o projector, defined by a compatible
  deformation of the almost complex structure on $H$ induced by the embedding
  of $bX$ as the boundary of $X,$ then the operators $\cT^{\eo}_{E+}$
  ($\cT^{\eo}_{E-}$) are graded elliptic elements of the extended Heisenberg
  calculus. If $X$ is pseudoconvex or $X$ is pseudoconcave and $n$ is odd,
  then,as block matrices, the parametrices for $\cT^{\eo}_{E\pm}$ have
  Heisenberg orders
\begin{equation}
\left(\begin{matrix} 0 & 1\\ 1 & 1\end{matrix}\right).
\end{equation}
If $X$ is pseudoconcave and $n$ is even, then,as block matrices, the
parametrices for $\cT^{\eo}_{E-}$ have Heisenberg orders
\begin{equation}
\left(\begin{matrix} 1 & 1\\ 1 & 0\end{matrix}\right).
\end{equation}

\end{theorem}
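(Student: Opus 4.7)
The proof has three parts. First, graded ellipticity in the extended Heisenberg calculus requires (a) invertibility of the classical principal symbol off the contact ray $L^\pm$ and (b) invertibility of the Heisenberg model operators at the relevant Heisenberg face. Part (a) is already given by Propositions \ref{prp8} and \ref{prp9}. For (b), I focus on the pseudoconvex even case; the other three cases are handled by symmetric arguments. The operative algebraic facts are $\ho-(n-1)=\sum_j C_j^*C_j\ge 0$ with $1$-dimensional kernel spanned by the vacuum state $\phi_0$; $\ho+(n-1)=\sum_j C_jC_j^*+2(n-1)>0$ invertible; and the recognition that $\sypr'_0$ (resp.\ $\syprc'_0$) is precisely the orthogonal projection onto $\phi_0$. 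The subelliptic boundary condition was tailored so that the direction in which the $(2,2)$-block $D=(\ho-(n-1))/\eta_0^2$ of (\ref{7.14.21}) fails to be invertible coincides with the direction in which the $(1,1)$-block $\sypr'_0\oplus \bzero$ is nonzero. Decomposing each copy of $\cS(\bbR^{n-1})$ as $\phi_0\oplus\phi_0^\perp$, a partial Schur complement handles the $\phi_0^\perp$ sector, where $D$ is strictly positive and invertible of Heisenberg order $-2$; on the vacuum sector the rank-one Szegő piece in the $(1,1)$-block uniquely determines the answer. Composing these pieces yields a symbolic inverse and establishes ellipticity.

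The order computation is then bookkeeping. The forward Heisenberg orders are $\begin{pmatrix}0&-1\\-1&-2\end{pmatrix}$ by inspection of (\ref{7.14.21})/(\ref{7.14.22}), and naive Schur inversion would yield $\begin{pmatrix}0&1\\1&2\end{pmatrix}$. The drop to $\begin{pmatrix}0&1\\1&1\end{pmatrix}$ comes from a cancellation in the $(2,2)$-block: the leading $\eta_0^{-2}$ part of $D^{-1}$ is $\eta_0^{-2}$ times the projection onto $\phi_0^\perp$, while the leading part of the $(1,1)$-block is $\sypr'_0$, the orthogonal projection onto $\phi_0$; when these are combined through the off-diagonal contribution $A_{22}^{-1}A_{21}(\text{Schur})^{-1}A_{12}A_{22}^{-1}$, the leading $\eta_0^{-2}$ terms cancel, leaving a subleading contribution of Heisenberg order $1$. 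For the pseudoconcave case with $n$ even, Lemma \ref{lemm5} places the conjugate Szegő projector in the $(2,2)$-block rather than the $(1,1)$-block (compare (\ref{7.15.5}) with (\ref{7.14.21})), exchanging the roles of the two diagonals; the same mechanism then drops the $(1,1)$-block of the parametrix from order $2$ to order $1$, producing the flipped pattern $\begin{pmatrix}1&1\\1&0\end{pmatrix}$. The pseudoconcave case with $n$ odd is structurally identical to the pseudoconvex case.

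The main obstacle is executing the ``partial Schur complement'' in part (b): because $D$ has a kernel, standard block inversion fails, and one must work explicitly with the splitting $\phi_0\oplus\phi_0^\perp$ in each fiber, verifying by direct computation with the creation/annihilation operators $C_j,C_j^*,e_j,\epsilon_j$ that the reduced system on $\phi_0$ is uniquely solvable and that the resulting inverse has the claimed block orders. Proposition \ref{prop33} provides the translation between parabolic monomials and Heisenberg symbol orders needed to verify the cancellation precisely, while Proposition \ref{prop6} ensures that the contributions from the lower-order terms of the symbol of $Q^{\eo}$ do not interfere with the principal-symbol computation.
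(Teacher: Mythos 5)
Your overall structure is aligned with the paper's: classical ellipticity off the contact ray from Propositions~\ref{prp8} and~\ref{prp9}, then invertibility of the Heisenberg model operators, then bookkeeping of Heisenberg orders (and the paper indeed records that the $(2,2)$ block of the parametrix has its a priori order-$2$ piece vanish, matching your claim). There is one notational slip: from equation~\eqref{7.14.11} and the commutation relations~\eqref{7.16.4}, $\ho-(n-1)=\sum_j C_jC_j^*$ (not $\sum_j C_j^*C_j$, which equals $\ho+(n-1)$); since $C_j^*$ annihilates the vacuum, it is $\sum C_jC_j^*$ that has the one-dimensional kernel. This does not affect your argument's structure, but it does affect which direction you must peel off.

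The genuine gap is in the hypothesis that ``$\sypr'_0$ is precisely the orthogonal projection onto $\phi_0$,'' where $\phi_0$ is the vacuum that spans $\ker(\ho-(n-1))$. That identification holds only when $\cS'$ is the \emph{classical} Szeg\H o projector, i.e., when the almost complex structure defining $\cS'$ coincides with the one induced by the embedding $bX\hookrightarrow X$. The theorem allows a compatible \emph{deformation}, in which case $\sypr_0'$ projects onto the vacuum of a different harmonic oscillator $h_{J'}$, a vector $z_0'$ that in general is neither equal to nor orthogonal to the classical vacuum $z_0$. Your argument --- that the kernel of the $(2,2)$-block and the range of $\sypr_0'$ ``coincide,'' so the vacuum sector is handled by the $(1,1)$-block acting as identity there --- therefore fails as written: $\sypr_0'$ restricted to $\bbC z_0$ is $\langle z_0,z_0'\rangle\, z_0'\otimes z_0^t$, which is not the identity on that line and does not even preserve it. What the paper actually needs, and proves in Lemma~\ref{lem22} and Corollary~\ref{cor2}, is the weaker but essential fact that $\langle z_0,z_0'\rangle>0$; this is used to build the bridging operator $\relp_{21}=\sypr_0^2\sypr_0^1/\Tr\sypr_0^2\sypr_0^1$ and to write explicit inverses such as~\eqref{7.19.15}, correcting the classical-case inverse by finite-rank terms in the Hermite ideal. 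Without this non-orthogonality input, the model operator could fail to be invertible (indeed, if the two vacua were orthogonal the $(1,1)$-block would annihilate the entire kernel direction and the block system would be singular). Your partial Schur complement idea is salvageable, but only after you prove the non-orthogonality of the two vacua and replace ``coincides'' with a projection-pair argument along the lines of the paper's $\relp_{21}$; as it stands, the proof establishes Theorem~\ref{thm1} only for the classical Szeg\H o projector and leaves the generalized case unaddressed.
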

\begin{proof} Using standard symbolic arguments, to prove the theorem it suffices
  to construct operators $\cU^{\eo}_{\pm}, \cV^{\eo}_{\pm},$ in the
  extended Heisenberg calculus, so that
\begin{equation}
\begin{split}
\cU^{\eo}_{\pm}\cT^{\eo}_{\pm}&=\Id+\sO_{-1,-1}^{eH}\\
\cT^{\eo}_{\pm}\cV^{\eo}_{\pm}&=\Id+\sO_{-1,-1}^{eH}.
\end{split}
\label{7.15.9}
\end{equation}
As usual, this just amounts to the invertibility of the principal symbols. Away
from the positive (negative) Heisenberg face this is clear, as the
operator is classically elliptic of order $0.$ Along the Heisenberg face, the operator is
graded so a little discussion is required. For a graded Heisenberg operator
$\cA,$ denote the matrix of model operators by
$$A=\left(\begin{matrix} A_{11} & A_{12}\\ A_{21} & A_{22}\end{matrix}
\right).$$
The blocks of $A$ have orders either $i+j-4$ or $2-(i+j).$
Suppose the model operators are invertible with inverses given by
$$B=\left(\begin{matrix} B_{11} & B_{12}\\ B_{21} & B_{22}\end{matrix}
\right).$$
The orders of the blocks of $B$ are either $4-(i+j)$ or $(i+j)-2.$ Let
$\cB$ denote an extended Heisenberg operator with principal symbol given by
$B.$ Then, in the first case, we have
\begin{equation}
\cA\cB=\left(\begin{matrix} \Id+\cE_{-1} & \cE_{-1}\\ \cE_0 &
\Id+\cE_{-1}\end{matrix}\right)\quad \cB\cA=\left(\begin{matrix} \Id +\cF_{-1}&
\cF_{0}\\ \cF_{-1} & \Id+\cF_{-1}\end{matrix}\right).
\end{equation}
Here $\cE_{j}, \cF_{j}$ denote operators with the indicated Heisenberg orders. Setting
\begin{equation}
\cB_r=\cB\left(\begin{matrix} \Id & 0\\ -\cE_0 & \Id\end{matrix}\right),\quad
\cB_l=\left(\begin{matrix} \Id & -\cF_0\\ 0 & \Id\end{matrix}\right)\cB
\end{equation}
gives the right and left parametrices called for in equation~\eqref{7.15.9}. A
similar argument works if the orders of $A$ are $(i+j)-2.$ Thus it suffices to
show that the model operators $\esym{eH}{}(\cT^{\eo}_{\pm})(\pm)$ are
invertible, in the graded sense used above. This is done in the next two sections.
\end{proof}
\begin{remark} In the analysis below we show that the order $2$ block in the
  parametrix is absent, hence it is not necessary to correct $\cB$
  with a triangular matrix.
\end{remark}

\section{Invertibility of the model operators with classical Szeg\H o
  projectors} In this section we prove Theorem~\ref{thm1} with the additional
assumption that the principal symbol of $\cS'$ ($\bcS'$) agrees with the
principal symbol, $\sypr_0,$ ($\syprc_0$) of the classical Szeg\H o projector
(conjugate Szeg\H o projector) defined by the CR-structure on $bX.$ In this
case the structure of the model operators is a little simpler. It is not
necessary to assume that the CR-structure on $bX$ is embeddable, as all that we
require are the symbolic identities
\begin{equation}
\sigma_1(\dbarb\cS)=0,\text{ and }
\sigma_1(\dbarb^*\bcS)=0.
\label{7.26.2}
\end{equation}
Note that $\cS_E$ (or $\bcS_E$) are projectors onto sections of
$\Lambda^{\eo}_b\otimes E\restrictedto_{bX}.$ Because the complex structure of
$E$ is compatible with that of $X,$ using the holomorphic frame introduced
in~\eqref{7.26.1}, we see that
\begin{equation}
\sigma(\cS_E)=\sigma(\cS)\otimes \Id_E,\quad
\sigma(\bcS_E)=\sigma(\bcS)\otimes \Id_E.
\end{equation}
Thus we may continue to suppress the explicit dependence on $E.$

The operators  $\{C_j\}$ are called the creation operators and the operators
$\{C_j^*\}$ the annihilation operators.  They satisfy the commutation relations
\begin{equation}
[C_j,C_k]=[C_j^*,C_k^*]=0,\quad
[C_j,C_k^*]=-2\delta_{jk}
\label{7.16.4}
\end{equation}
The operators $\cD_{\pm}$ act on sums of the form
\begin{equation}
\omega=\sum_{k=0}^{n-1}\sum_{I\in\cI_k'} f_I\bomega^I,
\end{equation}
here $\cI_k'$ are increasing multi-indices of length $k.$ The coefficients,
$\{f_I\}$ are sections of the appropriate holomorphic bundle, assumed
trivialized near $p,$ as described in Section~\ref{ss.2}, with vanishing
connection coefficients. We refer to the terms with $|I|=k$ as the terms of
degree $k.$ For an increasing $k$-multi-index $I=1\leq i_1<i_2<\dots<i_k\leq
n-1,$ $\bomega^I$ is defined by
\begin{equation}
\bomega^I=\frac{1}{2^{\frac k2}}d\bz_{i_1}\wedge\dots\wedge d\bz_{i_k}.
\end{equation}

We first describe the relationships among the operators $\sypr_0,
\cD_+, \syprc_0$ and $\cD_-.$ 
\begin{lemma}\label{lem7} Let $\sypr_0$ and $\syprc_0$ be the symbols of the
  classical Szeg\H o projector and conjugate Szeg\H o projector respectively,
  then
\begin{equation}
\left[\begin{matrix} \sypr_0 & 0 \\
0 &\bzero
\end{matrix}\right]\cD_+=0
\text{ and }
\left[\begin{matrix} \bzero & 0 \\
0 &\syprc_0
\end{matrix}\right]\cD_-=0
\end{equation}
\end{lemma}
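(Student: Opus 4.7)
The approach is to reduce both identities directly to the defining symbolic properties of the Szeg\H o projectors recorded in~\eqref{7.26.2}, namely $\sigma_1(\dbarb\cS)=0$ and $\sigma_1(\dbarb^*\bcS)=0$. At the model-operator level these become vacuum-annihilation statements for the creation/annihilation operators defined in~\eqref{7.14.10} and~\eqref{7.14.20}.

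For the first identity, I would first decompose $\cD_+=i\sum_j[C_je_j-C_j^*\epsilon_j]$ into its degree-raising and degree-lowering parts. Since $\epsilon_j$ raises form degree by one and $e_j$ lowers it, the degree-raising part $-i\sum_jC_j^*\epsilon_j$ is the model of $\dbarb$, and the degree-lowering part $i\sum_jC_je_j$ is the model of $\dbarb^*$. The matrix $\mathrm{diag}(\sypr_0,\bzero)$ annihilates all tangential forms of positive degree, so in the composition only the degree-lowering piece contributes, acting from $\Lambda^{0,1}_b$ into $\Lambda^{0,0}_b$. Thus $\mathrm{diag}(\sypr_0,\bzero)\cD_+$ reduces to $\sypr_0\circ(i\sum_jC_je_j)$, and the identity follows from $\sypr_0\circ C_j=0$.

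The equality $\sypr_0\circ C_j=0$ is in turn a direct consequence of the description of $\sypr_0$ as the rank-one projection onto the vacuum state $v_0$ with $C_j^*v_0=0$ for all $j$, which is how the symbol of the Szeg\H o projector arises for the $+$ quantization: $\sypr_0\circ C_j=v_0\otimes(C_j^*v_0)^*=0$. Equivalently, one can simply take the adjoint of $\sigma_1(\dbarb\cS)=0$ and use self-adjointness of $\cS$ to obtain $\sypr_0\circ\sigma_1(\dbarb^*)=0$.

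The second identity is handled by the same argument with the $-$ quantization: with the conventions of~\eqref{7.14.20}, the degree-raising piece of $\cD_-$ is $-i\sum_jC_j\epsilon_j$ (the model of $\dbarb$), $\syprc_0$ is the rank-one projection onto $v_0\otimes\bar\omega_1\wedge\cdots\wedge\bar\omega_{n-1}$ with $C_j^*v_0=0$, and the matrix $\mathrm{diag}(\bzero,\syprc_0)$ kills everything outside top tangential degree. Only the degree-raising part contributes, and $\syprc_0\circ C_j=0$ by the same vacuum computation (or equivalently by taking the adjoint of $\sigma_1(\dbarb^*\bcS)=0$). The main obstacle is purely notational: matching the block structures in the statement of the lemma to the correct form-degree decomposition and to the two different quantization conventions. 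Once the degree-raising/lowering structure of $\cD_\pm$ and the vacuum property of the Szeg\H o symbols are correctly aligned, no further analytic content is required.
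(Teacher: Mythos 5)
Your proof is correct and takes essentially the same route as the paper: identify that $\mathrm{diag}(\sypr_0,\bzero)$ (resp. $\mathrm{diag}(\bzero,\syprc_0)$) only sees the output of $\cD_\pm$ in degree $0$ (resp. $n-1$), observe that this output is spanned by terms of the form $C_j(\cdot)$, and conclude from $\sypr_0 C_j=0$ (resp. $\syprc_0 C_j=0$), which the paper likewise extracts by taking the adjoint of the defining identity~\eqref{7.26.2}. Your additional remark that $\sypr_0 C_j = v_0\otimes(C_j^*v_0)^t = 0$ because $C_j^*$ annihilates the vacuum is a nice self-contained justification of the same fact, but it is not a departure from the paper's argument.
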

\begin{proof} The range of $\cD_+$ in degree $0,$ where $\sypr_0$ acts, is
  spanned by expressions of the form
\begin{equation}
\sum_{j=1}^{n-1}C_j f_Ie_j\bomega^{I},\text{ with }|I|=1.
\end{equation}
Taking the adjoint, the first identity in~\eqref{7.26.2} is equivalent to
$\sypr_0C_j=0$ for all $j,$ and the lemma follows in this case. The range of
$\cD_-$ in degree $n-1,$ where $\syprc_0$ acts, is
  spanned by expressions of the form
\begin{equation}
\sum_{j=1}^{n-1}C_j f_I\epsilon_j\bomega^{I},\text{ with }|I|=n-2.
\end{equation}
Once again,~\eqref{7.26.2} implies that $\syprc_0C_j=0;$  the proof of the
lemma is complete.
\end{proof}

This lemma  simplifies the analysis of the  model operators for
$\cT^{\eo}_{\pm}.$  The following lemma is useful in finding their inverses.
\begin{lemma} Let $\Pi_q$ denote projection onto the terms of degree $q,$
\begin{equation}
\Pi_q\omega=\sum_{I\in\cI_q'} f_I\bomega^I.
\end{equation}
The operators $\cD_{\pm}$ satisfy the identities
\begin{equation}
\cD_+^2=\sum_{j=1}^{n-1}C_jC_j^*\otimes\Id+\sum_{q=0}^{n-1}2q\Pi_q,\quad
\cD_-^2=\sum_{j=1}^{n-1}C_jC_j^*\otimes\Id+\sum_{q=0}^{n-1}2(n-1-q)\Pi_q.
\label{7.16.9}
\end{equation}
\end{lemma}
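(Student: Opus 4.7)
The plan is to expand the squares directly using the definitions
$\cD_\pm=i\sum_{j=1}^{n-1}[C_j^{(\sharp)}e_j-C_j^{(\flat)}\epsilon_j]$, where
$(\sharp,\flat)=(\emptyset,*)$ for $\cD_+$ and $(*,\emptyset)$ for $\cD_-$,
and then to exploit the two disjoint commutation structures: the creation/annihilation
operators on $L^2(\bbR^{n-1})$ satisfying~\eqref{7.16.4}, and the exterior algebra
operators on $\Lambda^{0,*}_b$ satisfying the Clifford-like relations
$\{e_j,e_k\}=0$, $\{\epsilon_j,\epsilon_k\}=0$, and $\{e_j,\epsilon_k\}=\delta_{jk}$.
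These two sets of operators commute with each other, so they can be manipulated independently.

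For $\cD_+^2=-\sum_{j,k}(C_je_j-C_j^*\epsilon_j)(C_ke_k-C_k^*\epsilon_k)$, I would
first observe that the ``pure'' terms $\sum_{j,k} C_jC_ke_je_k$ and $\sum_{j,k}C_j^*C_k^*\epsilon_j\epsilon_k$ vanish: $C_jC_k$ is symmetric in $(j,k)$ while $e_je_k$ is antisymmetric, and similarly for the other pair. The surviving cross terms are
\[
\cD_+^2=\sum_{j,k}\bigl[C_jC_k^*\,e_j\epsilon_k+C_j^*C_k\,\epsilon_je_k\bigr].
\]
Relabeling $j\leftrightarrow k$ in the second sum converts it to $\sum_{j,k}C_k^*C_j\epsilon_ke_j$. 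Then, applying $e_j\epsilon_k=\delta_{jk}-\epsilon_ke_j$ in the first sum and $C_k^*C_j=C_jC_k^*+2\delta_{jk}$ in the second, the two $C_jC_k^*\epsilon_ke_j$ contributions cancel, leaving
\[
\cD_+^2=\sum_{j=1}^{n-1}C_jC_j^*+2\sum_{j=1}^{n-1}\epsilon_je_j.
\]
To finish, I would identify $\sum_j\epsilon_je_j$ as the form-degree (number) operator on $\Lambda^{0,*}_b$, which acts as multiplication by $q$ on $\Pi_q$; this gives the first identity in~\eqref{7.16.9}.

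For $\cD_-^2$ the computation is symmetric under swapping $C_j\leftrightarrow C_j^*$. After the identical cancellation of pure symmetric/antisymmetric terms and a relabeling, I would reduce to
\[
\cD_-^2=\sum_{j=1}^{n-1}C_j^*C_j-2\sum_{j=1}^{n-1}\epsilon_je_j,
\]
and then rewrite $C_j^*C_j=C_jC_j^*+2$ using~\eqref{7.16.4}, so that
$\sum_jC_j^*C_j=\sum_jC_jC_j^*+2(n-1)$. Combining with the number-operator identification yields the coefficient $2(n-1)-2q=2(n-1-q)$ on $\Pi_q$, which is the second identity in~\eqref{7.16.9}.

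There is no real obstacle beyond sign and index bookkeeping; the only point that needs attention is the asymmetry between the two commutation relations (the form operators anticommute into $\delta_{jk}$, while $[C_j,C_k^*]=-2\delta_{jk}$), which is what produces the factor of $2$ in front of the number operator and, in the $\cD_-$ case, the shift by $2(n-1)$ that flips the counting from $q$ to $n-1-q$.
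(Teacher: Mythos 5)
Your proof is correct and follows essentially the same route as the paper: expand $\cD_\pm^2$, discard the symmetric-times-antisymmetric pieces $\sum C_jC_ke_je_k$ and $\sum C_j^*C_k^*\epsilon_j\epsilon_k$, then push the creation/annihilation commutator $[C_j,C_k^*]=-2\delta_{jk}$ and the exterior anticommutator $\{e_j,\epsilon_k\}=\delta_{jk}$ through to land on $\sum_j C_jC_j^*$ plus twice the form-degree operator $\sum_j\epsilon_je_j$. The paper organizes the bookkeeping slightly differently (splitting off the $j\neq k$ part of the double sum at the start and showing it vanishes, then quoting $\sum_j\epsilon_je_j\bomega^I=|I|\bomega^I$), but the underlying algebra is identical.
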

\begin{proof}
In the proof of this lemma we make extensive usage of the following classical
identities, whose verification we leave to the reader.
\begin{lemma} The operators $\{e_j,\epsilon_j\}$ satisfy the following
  relations
\begin{equation}
\begin{split}
e_je_k=-e_ke_j,&\quad \epsilon_j\epsilon_k=-\epsilon_k\epsilon_j\text{ for all }j,k,\\
\epsilon_je_k&=-e_j\epsilon_k\text{ if }j\neq k.
\end{split}
\label{7.16.1}
\end{equation}
For $j=k$ we have
\begin{equation}
\epsilon_je_j\bomega^I=\begin{cases}
\bomega^I&\text{ if }j\in I\\
0&\text{ if }j\notin I\end{cases}\qquad
e_j\epsilon_j\bomega^I=\begin{cases}
\bomega^I&\text{ if }j\notin I\\
0&\text{ if }j\in I\end{cases}
\end{equation}
\end{lemma}
We start with $\cD_+,$ using the lemma we obtain that
\begin{equation}
\begin{split}
\cD_+^2=-&\sum_{j\neq k}\left(\ha[C_j,C_k]e_je_k+\ha[C_j^*,C_k^*]\epsilon_j\epsilon_k-
[C_j,C^*_k]e_j\epsilon_k\right)+\\
&\sum_{j=1}^{n-1}[C_jC_j^*e_j\epsilon_j+C_j^*C_j\epsilon_je_j].
\end{split}
\label{7.16.5}
\end{equation}
It follows from the commutation relations that the sum over $j\neq k$
vanishes. Using~\eqref{7.16.4} we rewrite the second sum as
\begin{equation}
\sum_{j=1}^{n-1}[C_jC_j^*e_j\epsilon_j+(C_jC_j^*+2)\epsilon_je_j].
\label{7.16.6}
\end{equation}
The statement of the lemma follows easily from~\eqref{7.16.6}, and the fact that
\begin{equation}
\sum_{j=1}^{n-1}\epsilon_je_j\bomega^I=|I|\bomega^I.
\label{7.16.8}
\end{equation}
The argument for
$\cD_-$ is quite similar. The analogous sum over $j\neq k$ vanishes and we see
that
\begin{equation}
\begin{split}
\cD_-^2&=\sum_{j=1}^{n-1}[C^*_jC_je_j\epsilon_j+C_jC_j^*\epsilon_je_j]\\
&=\sum_{j=1}^{n-1}[(C_jC_j^*+2)e_j\epsilon_j+C_jC_j^*\epsilon_je_j].
\end{split}
\label{7.16.7}
\end{equation}
The proof is completed as before using
\begin{equation}
\sum_{j=1}^{n-1}e_j\epsilon_j\bomega^I=(n-1-|I|)\bomega^I.
\end{equation}
instead of~\eqref{7.16.8}.
\end{proof}

Before we construct the explicit inverses, we show that
$\esym{eH}{}(\cT^{\eo}_{\pm})(\pm)$ are Fredholm elements (in the graded
sense), in the isotropic algebra. Notice that this is a purely symbolic
statement in the isotropic algebra. The isotropic blocks have orders
\begin{equation}
\left(\begin{matrix} 0 & 1\\ 1& 2\end{matrix} \right)
\end{equation}
on the pseudoconvex side and on the pseudoconcave side if $n$ is odd, and
orders
\begin{equation}
\left(\begin{matrix} 2 & 1\\ 1& 0\end{matrix} \right)
\end{equation}
on the pseudoconcave side if $n$ is even. The leading order part in the
isotropic algebra is independent of the choice of generalized (conjugate)
Szeg\H o projector.  In the former case we can think of the operator as
defining a map from $H^1(\bbR^{n-1};E_1)\oplus H^{2}(\bbR^{n-1}; E_2)$ to
$H^1(\bbR^{n-1};F_1)\oplus H^{0}(\bbR^{n-1}; F_2)$ for appropriate vector
bundles $E_1, E_2, F_1, F_2.$ In the later case the map is from
$H^2(\bbR^{n-1};E_1)\oplus H^{1}(\bbR^{n-1}; E_2)$ to
$H^0(\bbR^{n-1};F_1)\oplus H^{1}(\bbR^{n-1}; F_2).$ It is as maps between these
spaces that the model operators are Fredholm.
\begin{proposition}\label{prop12} The model operators,
  $\esym{eH}{}(\cT^{\eo}_{\pm})(\pm),$ are graded Fredholm elements in the
  isotropic algebra.
\end{proposition}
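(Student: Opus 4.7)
The plan is to reduce graded Fredholmness to a check of invertibility of the leading isotropic symbol at infinity in $(w,\xi)$, which is the standard criterion in the isotropic calculus. The key preliminary observation is that the generalized Szeg\H o projector symbols $\sypr'_0$ and $\syprc'_0$, being rank-one projections onto the Schwartz vacuum state of the compatible field of harmonic oscillators $h_J$, are operators whose Schwartz kernels are Schwartz functions on $\bbR^{2(n-1)}$; hence they are isotropically smoothing. Consequently the $(1,1)$ blocks of the model matrices $\esym{eH}{}(\cT^{\eo}_{\pm})(\pm)$ are compact as maps between any pair of isotropic Sobolev spaces, and therefore do not affect the Fredholm property. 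This means I may analyze everything as if the $(1,1)$ corner were zero.

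Next I would unpack the graded structure. In the pseudoconvex case (and pseudoconcave with $n$ odd) the blocks have isotropic orders
$\left(\begin{smallmatrix} 0 & 1\\ 1 & 2\end{smallmatrix}\right)$, so the natural domain and target are $H^1\oplus H^2$ and $H^1\oplus H^0$ respectively; in the pseudoconcave case with $n$ even the orders are
$\left(\begin{smallmatrix} 2 & 1\\ 1 & 0\end{smallmatrix}\right)$ with domain $H^2\oplus H^1$ and target $H^0\oplus H^1$. Conjugating by the appropriate powers of $(1+\ho)^{1/2}$ realigns the operator as an order $0$ matrix on $L^2\oplus L^2$, and Fredholmness between the graded Sobolev spaces is equivalent to Fredholmness of this reduced operator, hence (by Shubin's theorem for the isotropic calculus) to invertibility of its isotropic principal symbol on the sphere at infinity in $(w,\xi)$.

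The verification of that invertibility is algebraic. After dropping the smoothing $(1,1)$ corner, the remaining matrix has the schematic form
\begin{equation*}
\begin{pmatrix} 0 & -\cD_\pm \\ \cD_\pm & \ho \pm (n-1)\end{pmatrix}
\end{equation*}
(or its transpose in the pseudoconcave even case), and a Schur complement computation reduces invertibility at infinity to invertibility of $\cD_\pm^2$ modulo $\ho$. Precisely, the identity
\begin{equation*}
\cD_\pm^2=\sum_{j=1}^{n-1}C_jC_j^*\otimes\Id+\sum_{q=0}^{n-1}c_q^{\pm}\Pi_q
\end{equation*}
established in the previous lemma shows that the isotropic principal symbol of $\cD_\pm^2$ equals $|w|^2+|\xi|^2$ times the identity, matching the principal symbol of $\ho$ on every degree component. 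Hence the Schur complement has leading isotropic symbol a nonzero multiple of the identity at infinity, and the full graded symbol is invertible there.

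The main obstacle I anticipate is not the symbolic calculation itself, which is immediate from the formulas in \eqref{7.14.21}--\eqref{7.14.22} and \eqref{7.15.5}--\eqref{7.15.8} together with the lemma on $\cD_\pm^2$, but rather the careful graded bookkeeping: I must check that the reduction by $(1+\ho)^{1/2}$ conjugation, or equivalently the direct graded parametrix construction, identifies the correct scalar leading symbol on each of the four blocks without producing spurious cross-contributions, and that the compact perturbation coming from the $(1,1)$ corner is indeed compact in the graded sense (not just in a fixed Sobolev pair). Once these are in place, Fredholmness is automatic; the stronger statement of invertibility, with explicit inverses, is what the remainder of the section and the next section establish.
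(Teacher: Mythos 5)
Your proof is correct and rests on the same essential ideas as the paper's: discard the smoothing corner(s) coming from the rank-one projector $\sypr'_0$ (respectively $\syprc'_0$)---note the $(1,2)$ block also carries a smoothing contribution $2\sypr'_0\cD_\pm/\eta_0$, which your schematic form correctly drops---and then exploit the identity $\cD_\pm^2 = \ho + O(1)$ to invert the reduced matrix. The paper phrases this by exhibiting the explicit graded parametrix $\left(\begin{smallmatrix} \Id & \pm\ho^{-1}\cD_\pm\\ \mp\ho^{-1}\cD_\pm & \bzero\end{smallmatrix}\right)$ directly from $[\ho^{-1}\cD_\pm]\cD_\pm=\Id+\sO^{\iso}_{-1}$, whereas you conjugate to order zero and invoke the Shubin ellipticity criterion with a Schur-complement reduction; these are the same computation in different packaging.
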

\begin{proof} As noted above this is a purely symbolic statement in the
  isotropic algebra. It suffices to show that the model operators are
  invertible, by appropriately graded elements of the isotropic algebra, up to
  an error of lower order. Equation~\eqref{7.16.5} shows that
\begin{equation}
[\ho^{-1}\cD_{\pm}]\cD_{\pm}=\cD_{\pm}[\ho^{-1}\cD_{\pm}]=\Id+\sO^{\iso}_{-1}.
\label{7.20.1}
\end{equation}
Here $\sO^{\iso}_j$ is a term of order at most $j$ in the isotropic algebra.
Up to lower order terms, the model operators are
\begin{equation}
\begin{split}
\esym{eH}{}(\cT^{\eo}_+)(+)=\left(\begin{matrix} \bzero & \mp\cD_+\\
\pm\cD_+ & \ho\end{matrix}\right)\\
n\text{ odd }\quad\esym{eH}{}(\cT^{\eo}_-)(-)=\left(\begin{matrix} \bzero & \mp\cD_-\\
\pm\cD_- & \ho\end{matrix}\right)\\
n\text{ even }\quad\esym{eH}{}(\cT^{\eo}_-)(-)=\left(\begin{matrix} \ho & \mp\cD_-\\
\pm\cD_- & \bzero\end{matrix}\right)
\end{split}
\label{7.20.2}
\end{equation}
The isotropic principal symbol of $\ho$ is $|\eta'|^2.$ For these computations,
we let $\ho^{-1}$ denote a model operator with isotropic principal symbol
$|\eta'|^{-2}.$ Using~\eqref{7.20.1}, we see that the operators
in~\eqref{7.20.2} have right parametrices:
\begin{equation}
\begin{split}
\left(\begin{matrix} \bzero & \mp\cD_+\\
\pm\cD_+ & \ho\end{matrix}\right)\left(\begin{matrix} \Id & \pm\ho^{-1}\cD_+\\
\mp\ho^{-1}\cD_+ & \bzero\end{matrix}\right)=\Id+\sO^{\iso}_{-1}\\
n\text{ odd }\quad\left(\begin{matrix} \bzero & \mp\cD_-\\
\pm\cD_- & \ho\end{matrix}\right)\left(\begin{matrix} \Id & \pm\ho^{-1}\cD_-\\
\mp\ho^{-1}\cD_- & \bzero\end{matrix}\right)=\Id+\sO^{\iso}_{-1}\\
n\text{ even }\quad\left(\begin{matrix} \ho & \mp\cD_-\\
\pm\cD_- & \bzero\end{matrix}\right)\left(\begin{matrix} \bzero & \pm\ho^{-1}\cD_-\\
\mp\ho^{-1}\cD_- & \Id\end{matrix}\right)=\Id+\sO^{\iso}_{-1}
\end{split}
\label{7.20.3}
\end{equation}
The same model operators provide left parametrices as well. This proves the proposition.
\end{proof}
\begin{remark} Note that the block of the principal symbols of the parametrices,
  expected to have order 2, actually vanishes.  As a result, the inverses of the
  model operators have Heisenberg order at most $1,$ which in turn allows us to
  deduce the standard subelliptic $\ha$-estimates for these boundary value
  problems.
\end{remark}

The operators $\cD_{\pm}^{\even}$ and $\cD_{\pm}^{\odd}$ are adjoint to one
another. From~\eqref{7.16.9} and the well known properties of the harmonic
oscillator, it is clear that $\cD_+^{\even}\cD_{+}^{\odd}$ is invertible. As
$\cD_+^{\even}$ has a one dimensional null space this easily implies that
$\cD_+^{\odd}$ is injective with image orthogonal to the range
of $\sypr_0,$ while $\cD_+^{\even}$ is surjective. The analogous statements for
$\cD_-^{\eo}$ depend on the parity of $n,$  as $\cD_-^2$ has a null space of
dimension one spanned by the forms of degree $n-1$ in the image of $\syprc_0.$
If $n$ is even, then $\cD_-^{\even}$ is injective and $\cD_-^{\odd}$ is
surjective, with a one dimensional null space spanned by the range of
$\syprc_0.$ If $n$ is odd, then $\cD_-^{\odd}$ is injective and $\cD_-^{\even}$
is surjective. With these observations we easily invert the model operators.

We begin with the $+$ side. Let $[\cD_+^{\even}]^{-1}u$ denote the unique solution
to the equation
$$\cD_+^{\even} v=u,$$
orthogonal to the null space of $\cD_+^{\even}.$ We let 
\begin{equation}
\sh{u}=\left(\begin{matrix} 1-\sypr_0 & 0\\ 0& \Id\end{matrix}\right)u;
\end{equation}
this is the projection onto the range of $\cD_+^{\odd}$ and
\begin{equation}
u_0= \left(\begin{matrix} \sypr_0 & 0\\ 0& \bzero\end{matrix}\right)u,
\end{equation}
denotes the projection onto the nullspace of $\cD_+^{\even}.$ We let
$[\cD_+^{\odd}]^{-1}$ denote the unique solution to
$$\cD_+^{\odd}v=\sh{u}.$$ 
Proposition~\ref{prop12} shows that these
partial inverses are isotropic operators of order $-1.$

With this notation we find the inverse of $\esym{eH}{}(\cT^{\even}_+)(+).$
The vector $[u,v]$ satisfies
\begin{equation}
\esym{eH}{}(\cT^{\even}_+)(+)\left[\begin{matrix} u\\ v
\end{matrix}\right]=\left[\begin{matrix} a\\ b
\end{matrix}\right]
\end{equation}
if and only if
\begin{equation}
\begin{split}
u&=a_0+[\cD_+^{\even}]^{-1}(\ho-(n-1))[\cD_+^{\odd}]^{-1}
\sh{a}+[\cD_+^{\even}]^{-1} b\\
v&=-[\cD_+^{\odd}]^{-1}\sh{a}.
\end{split}
\label{7.19.13}
\end{equation}
Writing out the inverse as a block matrix of operators, with appropriate
factors of $\eta_0$ included, gives:
\begin{equation}
\begin{split}
[\esym{eH}{}&(\cT^{\even}_+)(+)]^{-1}=\\
&\left[\begin{matrix} \left(\begin{matrix} \sypr_0
      &0\\0&\bzero\end{matrix}\right)
+[\cD_+^{\even}]^{-1}(\ho-(n-1))[\cD_+^{\odd}]^{-1}\left(\begin{matrix}1- \sypr_0
      &0\\0&\Id\end{matrix}\right) & \eta_0[\cD_+^{\even}]^{-1}\\
-\eta_0[\cD_+^{\odd}]^{-1}\left(\begin{matrix}1- \sypr_0
      &0\\0&\Id\end{matrix}\right) & \bzero\end{matrix}\right]
\end{split}
\end{equation}
The isotropic operators $[\cD_{+}^{\eo}]^{-1}$ are of order $-1,$ whereas
$[\cD_+^{\even}]^{-1}(\ho-(n-1))[\cD_+^{\odd}]^{-1}$ is of order zero. The
Schwartz kernel of $\sypr_0$ is rapidly decreasing. From this we conclude that
the Heisenberg orders, as a block matrix, of the parametrix for
$[\esym{eH}{}(\cT^{\even}_+)(+)]$  are
\begin{equation}
\left(\begin{matrix} 0 & 1 \\ 1&1\end{matrix}\right).
\label{7.16.10}
\end{equation}
We get a $1$ in the lower right corner because the principal symbol, a priori
of order $2,$ of this entry vanishes. The solution for the odd case is given by
\begin{equation}
\begin{split}
u&=a_0+[\cD_+^{\even}]^{-1}(\ho+(n-1))[\cD_+^{\odd}]^{-1}
\sh{a}-[\cD_+^{\even}]^{-1} b\\
v&=[\cD_+^{\odd}]^{-1}\sh{a}.
\end{split}
\end{equation}
Once again the $2,2$ block of  $[\esym{eH}{}(\cT^{\odd}_+)(+)]^{-1}$ vanishes,
and the principal symbol has the Heisenberg orders indicated
in~\eqref{7.16.10}.

We complete this analysis by writing the solutions to
\begin{equation}
\esym{eH}{}(\cT^{\eo}_-)(-)\left[\begin{matrix} u \\ v\end{matrix}\right]=
\left[\begin{matrix} a \\ b\end{matrix}\right],
\end{equation}
in the various cases. For $n$ even, the operator $\cD_-^{\even}$ is injective and
$\cD_-^{\odd}$ has a one dimensional null space. We let $u_0$ denote the
projection of $u$ onto the null space and $\sh{u}$ the projection onto its
complement. With the notation for the partial inverses of $\cD_-^{\eo}$
analogous to that used in the $+$ case, we have the solution operators:
\begin{equation}
\begin{split}
\text{ even }\quad
u &=[\cD_-^{\even}]^{-1}\sh{b}\\
v&=b_0+[\cD_-^{\odd}]^{-1}(\ho-(n-1))[\cD_-^{\even}]^{-1}\sh{b}-[\cD_-^{\odd}]^{-1}a\\
\text{ odd }\quad
u &=-[\cD_-^{\even}]^{-1}\sh{b}\\
v&=b_0+[\cD_-^{\odd}]^{-1}(\ho-(n-1))[\cD_-^{\even}]^{-1}\sh{b}+[\cD_-^{\odd}]^{-1}a
\end{split}
\label{7.16.11}
\end{equation}
Here and in~\eqref{7.16.12}, ``even'' and ``odd'' refer to the parity of the
spinor.  For $n$ even, the operator $\cD_-^{\odd}$ is injective and
$\cD_-^{\even}$ has a one dimensional null space. We let $u_0$ denote the
projection of $u$ onto the null space and $\sh{u}$ the projection onto its
complement.
\begin{equation}
\begin{split}
\text{ even }\quad
u &=
a_0+[\cD_-^{\even}]^{-1}(\ho+(n-1))[\cD_-^{\odd}]^{-1}\sh{a}+[\cD_-^{\even}]^{-1}
b\\
v &= -[\cD_-^{\odd}]^{-1}\sh{a},\\
\text{ odd }\quad
u &=
a_0+[\cD_-^{\even}]^{-1}(\ho+(n-1))[\cD_-^{\odd}]^{-1}\sh{a}-[\cD_-^{\even}]^{-1}
b\\
v &= [\cD_-^{\odd}]^{-1}\sh{a}.
\end{split}
\label{7.16.12}
\end{equation}
If $n$ is even, then the $(1,1)$ block of the principal symbols of
$[\esym{eH}{}(\cT^{\eo}_-)(-)]^{-1}$ vanishes and therefore the Heisenberg orders
of the blocks of the parametrices are
\begin{equation}
\left[\begin{matrix} 1 & 1\\ 1& 0\end{matrix}\right].
\end{equation}
If $n$ is odd, then the $(2,2)$ block the principal symbols of
$[\esym{eH}{}(\cT^{\eo}_-)(-)]^{-1}$ vanishes and therefore the Heisenberg orders
of the blocks of the parametrices are
\begin{equation}
\left[\begin{matrix} 0 & 1\\ 1& 1\end{matrix}\right].
\end{equation}

For the case of classical Szeg\H o projectors, Lemma~\ref{lem7} implies that
the model operators satisfy
\begin{equation}
[\esym{eH}{}(\cT^{\eo}_{\pm})(\pm)]^*=\esym{eH}{}(\cT^{\ooee}_{\pm})(\pm).
\end{equation}
From Proposition~\ref{prop12} we know that these are Fredholm operators. Since
we have shown that all the operators $\esym{eH}{}(\cT^{\eo}_{\pm})(\pm)$ are
surjective, i.e., have a left inverse, it follows that all are in fact
injective and therefore invertible.  In all cases this completes the proof of
Theorem~\ref{thm1} in the special case that the principal symbols of $\cS'$
or $\bcS'$ agree with those of the classical Szeg\H o projector or conjugate
Szeg\H o projector.

\section{Invertibility of the model operators with generalized\\ Szeg\H o
  projectors} 

The proof of Theorem~\ref{thm1}, with generalized Szeg\H o projectors, is not
much different from that covered in the previous section. We show here that the
parametrices for $\esym{eH}{}(\cT^{\eo}_{-})(-)$ differ from those with
classical Szeg\H o projectors (or conjugate Szeg\H o projectors) by operators
of finite rank. The Schwartz kernels of the correction terms are in the Hermite
ideal, and so do not affect the Heisenberg orders of the blocks in the
parametrix. As before the principal symbol in the $(2,2)$ block (or $(1,1)$ block,
where appropriate) vanishes.

In~\cite{EpsteinMelrose3}  we characterize the set of compatible almost complex
structures in the following way:
\begin{lemma} Let $J_1$ and $J_2$ be compatible almost complex structures on
  the co-oriented contact manifold $Y.$ For each $p\in Y$ there is a Darboux
  coordinate system centered at $p$, so that, if $(\eta_0,\eta')$ are the
  linear coordinates on $T^*_pY,$ then
\begin{equation}
h_{J_1}(\eta)=\sum_{j=1}^{2(n-1)}\eta_j^2\text{ and }
h_{J_2}(\eta)=\sum_{j=1}^{n-1}[\mu_j\eta_j^2+\mu_j^{-1}\eta_{j+n-1}^2]
\end{equation}
for positive numbers $(\mu_1,\dots,\mu_{n-1}).$
\end{lemma}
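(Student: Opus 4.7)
The plan is to reduce the statement to a linear normal-form problem on the symplectic vector space $(H_p^*, \omega'_p)$ and then solve that problem by exploiting the relationship between the two dual complex structures $J_1'$ and $J_2'$. Start with any Darboux coordinates centered at $p$ furnished by the contact Darboux theorem; this normalizes $\theta_p$ and $d\theta_p$, giving linear coordinates on $T_p^*Y$ in which $\omega'_p$ is the standard symplectic form on $H_p^*$. Any linear symplectic automorphism of $(H_p^*, \omega'_p)$ can be realized as the linearization at $p$ of a change of Darboux coordinates (apply the transformation in the $y'$-variables and absorb the resulting closed linear $1$-form into a quadratic correction of the Reeb coordinate $y_0$), so we are free to precompose by any such symplectic linear change. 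The problem thus reduces to finding a single basis of $H_p^*$ which is simultaneously $\omega'_p$-symplectic, $h_{J_1}$-orthonormal, and diagonalizes $h_{J_2}$ with the stated eigenvalue pattern.

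To analyze the pair $(h_{J_1}, h_{J_2})$, introduce the operator $A$ on $H_p^*$ defined by $h_{J_2}(u,v) = h_{J_1}(Au,v)$. Writing the associated symmetric bilinear form as $h_{J_i}(u,v) = \omega'(J_i' u, v)$ (symmetry follows directly from compatibility of $J_i$) and using non-degeneracy of $\omega'$ yields $A = -J_1' J_2'$. Three properties follow at once: $A$ is $h_{J_1}$-symmetric, $h_{J_1}$-positive definite, and $\omega'$-symplectic (it is a product of two symplectic maps). Expanding the identity $(J_2')^2 = -\Id$ with the help of $(J_1')^{-1} = -J_1'$ yields the key intertwining relation
\[
J_1' A (J_1')^{-1} = A^{-1},
\]
so $J_1'$ sends the $\mu$-eigenspace of $A$ onto the $\mu^{-1}$-eigenspace. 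Combined with the fact that symplecticity of $A$ forces eigenspaces for eigenvalues with product $\neq 1$ to be $\omega'$-orthogonal, one concludes that each eigenspace of $A$ for $\mu \neq 1$ is $\omega'$-isotropic and that distinct hyperbolic blocks $V_{\mu_j} \oplus V_{\mu_j^{-1}}$ are mutually $\omega'$- and $h_{J_1}$-orthogonal.

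For each distinct eigenvalue $\mu_j \geq 1$ of $A$, pick an $h_{J_1}$-orthonormal basis $\{e_{j,a}\}$ of the $\mu_j$-eigenspace and set $f_{j,a} := -J_1' e_{j,a} \in V_{\mu_j^{-1}}$. Since $J_1'$ preserves $h_{J_1}$, the $\{f_{j,a}\}$ form an $h_{J_1}$-orthonormal basis of $V_{\mu_j^{-1}}$; a one-line computation using the symmetry of $h_{J_1}$ gives $\omega'(e_{j,a}, f_{k,b}) = h_{J_1}(e_{j,a}, e_{k,b}) = \delta_{jk}\delta_{ab}$, while the isotropy and orthogonality statements above deliver all remaining symplectic vanishings. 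A standard symplectic orthonormalization on the (possibly trivial) $\mu = 1$ eigenspace completes the basis. In the dual linear coordinates, $h_{J_1}$ becomes $\sum \eta_j^2$ and $h_{J_2}$ acquires the required diagonal form with entries $\mu_j$ along the $e$-axes and $\mu_j^{-1}$ along the $f$-axes.

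The principal obstacle is the simultaneous character of the normalization: an arbitrary Williamson basis for $(h_{J_2}, \omega')$ is not $h_{J_1}$-orthonormal, and conversely an arbitrary $h_{J_1}$-orthonormal eigenbasis of $A$ is not $\omega'$-symplectic. The intertwining identity $J_1' A (J_1')^{-1} = A^{-1}$ is precisely what reconciles the two requirements, by supplying a canonical $\omega'$-dual to each eigenvector of $A$ that is automatically $h_{J_1}$-orthonormal. Once this linear-algebraic basis is in hand, transporting it back through the Darboux coordinate change yields the coordinate system claimed in the lemma.
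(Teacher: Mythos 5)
The paper itself supplies no proof of this lemma; it simply cites~\cite{EpsteinMelrose3}, so there is no in-paper argument against which to compare yours. Taken on its own terms, your proof is correct and complete.

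A few remarks on the specific steps. The reduction to a linear normal-form problem on $(H_p^*,\omega_p')$ is sound: after fixing any Darboux chart, precomposing with a linear symplectomorphism of the $y'$-variables and absorbing the resulting closed (hence exact, and quadratic) difference of primitives into $y_0$ produces a new Darboux chart with the desired linearization at $p$, exactly as you say. The identification $A=(J_1')^{-1}J_2'=-J_1'J_2'$ and its $h_{J_1}$-symmetry, positivity, and symplecticity are all routine and correct. The pivotal observation is the intertwining relation $J_1'A(J_1')^{-1}=A^{-1}$; note it is forced by $(J_1')^{-1}=-J_1'$ together with $(J_2')^{-1}=-J_2'$, and it is precisely what lets you produce, from an $h_{J_1}$-orthonormal eigenbasis $\{e_{j,a}\}$ of $V_{\mu_j}$ for $\mu_j>1$, the canonical conjugate vectors $f_{j,a}=-J_1'e_{j,a}\in V_{\mu_j^{-1}}$ that make the basis simultaneously $\omega'$-symplectic and $h_{J_1}$-orthonormal. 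The bookkeeping of $\omega'$- and $h_{J_1}$-orthogonality between distinct eigenspaces and the isotropy of $V_\mu$ for $\mu\neq 1$ is standard and correct, and the $\mu=1$ block is handled by the usual unitary-basis construction for the compatible triple $(V_1,\omega'|_{V_1},J_1'|_{V_1})$. Reading off the coordinate expressions then gives $h_{J_1}=\sum_j\eta_j^2$ and $h_{J_2}=\sum_j[\mu_j\eta_j^2+\mu_j^{-1}\eta_{j+n-1}^2]$ with the $\mu_j$ listed with multiplicity. This is a clean, self-contained substitute for the citation.
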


We split the coordinates $\eta'$ into
$(w_1,\dots,w_{n-1};\varphi_1,\dots,\varphi_{n-1}).$ Let $\ho_1$ and $\ho_2$ denote
the harmonic oscillators obtained by quantizing these symbols with respect to
this splitting, then the ground states for these operators are spanned by
\begin{equation}
\begin{split}
v_{0}{^1}&=m^1\exp\left[-\ha\sum_{j=1}^{n-1}w_j^2\right]\\
v_{0}^{2}&=m^2\exp\left[-\ha\sum_{j=1}^{n-1}\left(\frac{w_j}{\mu_j}\right)^2\right],
\end{split}
\label{7.19.1}
\end{equation}
with $m^j$ chosen so that $\|v_0^j\|_{L^2}=1.$
From these expressions we easily deduce the following result.
\begin{lemma}\label{lem22} If $J_1$ and $J_2$ are compatible almost complex structures,
  then, with respect to the $L^2$-inner product on $\bbR^{n-1}$ defined by a
  choice of splitting of $H_p$, we have
\begin{equation}
\langle v_{0}^1, v_{0}^2\rangle>0.
\end{equation}
On a compact manifold, this inner product is a smooth function, bounded below
by a positive constant. If $\sypr_0^j$ denote the projections onto the respective
vacuum states, then
\begin{equation}
\langle v_{0}^1, v_{0}^2\rangle^2=\Tr\sypr_0^1\sypr_0^2,
\label{7.19.2}
\end{equation}
is therefore well defined independent of the choice of quantization.
\end{lemma}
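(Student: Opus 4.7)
The plan is to reduce each assertion to an elementary Gaussian computation using the explicit formulas \eqref{7.19.1}. First I would observe that, in the adapted Darboux coordinates supplied by the previous lemma, both $v_0^1$ and $v_0^2$ are \emph{strictly positive} Gaussians on $\bbR^{n-1}$ with positive normalizing constants $m^1,m^2$ (determined by the $L^2$-normalization). Consequently their pairing
\begin{equation}
\langle v_0^1,v_0^2\rangle = m^1m^2\int_{\bbR^{n-1}}\exp\!\left[-\tfrac12\sum_{j=1}^{n-1}\bigl(1+\mu_j^{-2}\bigr)w_j^2\right]dw
\end{equation}
is the integral of a positive integrable function, hence is strictly positive. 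The Gaussian integral evaluates to an explicit product
\begin{equation}
\langle v_0^1,v_0^2\rangle = m^1m^2\prod_{j=1}^{n-1}\sqrt{\frac{2\pi\mu_j^2}{1+\mu_j^2}},
\end{equation}
which depends smoothly on the parameters $\mu_j$.

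For the second assertion, I would note that as $p$ varies over the compact contact manifold $Y$ the eigenvalues $\mu_j(p)$ of $J_2$ relative to $J_1$ are continuous positive functions of $p$, hence bounded above and below by positive constants. The normalizing factors $m^j$ depend smoothly on $p$ through the splitting of $H_p$. Thus the explicit formula above defines a smooth strictly positive function on $Y$, which on a compact manifold is bounded below by a positive constant.

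The trace identity is a purely algebraic statement about rank-one projections. Since $\sypr_0^j = |v_0^j\rangle\langle v_0^j|$ are rank-one orthogonal projections onto unit vectors, one computes
\begin{equation}
\Tr\,\sypr_0^1\sypr_0^2 = \Tr\bigl(|v_0^1\rangle\langle v_0^1|v_0^2\rangle\langle v_0^2|\bigr) = \langle v_0^1,v_0^2\rangle\langle v_0^2,v_0^1\rangle = |\langle v_0^1,v_0^2\rangle|^2,
\end{equation}
and since $\langle v_0^1,v_0^2\rangle$ has already been shown to be real and positive, this equals $\langle v_0^1,v_0^2\rangle^2$. Independence of the choice of quantization is then automatic: the left side $\Tr\,\sypr_0^1\sypr_0^2$ is an invariant of the pair of projections, while a different choice of splitting of $H_p$ would give an $L^2(\bbR^{n-1})$ related to the first by a unitary transformation, under which the vacuum states transform unitarily and the individual inner products transform by matching unitary factors that cancel on squaring. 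The only mildly subtle point, which I would flag but not belabor, is verifying that the positivity statement does not depend on the choice of adapted Darboux coordinates: this follows because any two such choices are related by a linear symplectic transformation preserving both $h_{J_1}$ and $h_{J_2}$ up to a common conjugation, so the two ground-state Gaussians retain their strict positivity in any such frame.
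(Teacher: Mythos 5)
Your proof is correct and follows the same basic route as the paper's (which is very terse): both rest on the explicit Gaussian formulas for the ground states and the rank-one kernel $v_0^j\otimes v_0^{jt}$ for $\sypr_0^j$. The paper simply asserts the positivity and boundedness, states the Schwartz-kernel identity for $\sypr_0^j$ to derive the trace formula, and then cites \cite{EpsteinMelrose3} for the invariance of the trace under change of quantization. You flesh out the positivity by actually evaluating the Gaussian integral and give a direct metaplectic argument for the invariance in place of the citation; both of these additions are fine, and the metaplectic argument is exactly what the cited reference is implicitly relying on. One small observation: your unitarity argument actually shows that $\langle v_0^1,v_0^2\rangle$ itself (not just its square) is invariant, since a metaplectic operator $U$ gives $\langle Uv_0^1,Uv_0^2\rangle=\langle v_0^1,v_0^2\rangle$ irrespective of the phase ambiguity in $U$ — this is slightly stronger than what the trace identity alone provides.
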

\begin{proof} Only the second statement requires a proof. In terms of any
  Darboux coordinate system, the projection onto the vacuum state has Schwartz kernel
\begin{equation}
v_0^j\otimes v_0^{jt}.
\label{7.19.5}
\end{equation}
This shows that~\eqref{7.19.2} is correct. It is shown
in~\cite{EpsteinMelrose3} that the trace is independent of the choice of
quantization.
\end{proof}

For our applications, the following corollary is very useful.
\begin{corollary}\label{cor2} Let $J_1$ and $J_2$ be compatible almost complex structures,
In a choice of quantization we define the model operator
\begin{equation}
\relp_{21}=\frac{\sypr_0^2\sypr_0^1}{\Tr \sypr_0^2\sypr_0^1}.
\label{7.19.4}
\end{equation}
This operator is globally defined, belongs to the Hermite ideal,  and satisfies
\begin{equation}
\sypr_0^1\relp_{21}=\sypr_0^1.
\label{7.19.6}
\end{equation}

\end{corollary}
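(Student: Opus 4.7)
The plan is to exploit the rank-one structure of the vacuum projections and reduce everything to elementary linear algebra, then invoke Lemma~\ref{lem22} for invariance under the choice of quantization. In any fixed Darboux coordinate system, each $\sypr_0^j$ has Schwartz kernel $v_0^j\otimes v_0^{jt}$ as recorded in~\eqref{7.19.5}. A direct composition gives
$$
\sypr_0^2\sypr_0^1 \;=\; \langle v_0^2,v_0^1\rangle\, v_0^2\otimes v_0^{1t},
$$
so $\sypr_0^2\sypr_0^1$ is itself rank one, with Schwartz class kernel on $\bbR^{n-1}\times\bbR^{n-1}$, and trace equal to $\langle v_0^1,v_0^2\rangle^2$. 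By Lemma~\ref{lem22} this trace is strictly positive, so the normalization in~\eqref{7.19.4} makes sense, and the resulting operator $\relp_{21}$ has Schwartz class kernel and therefore lies in the Hermite ideal.

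To prove the intertwining identity $\sypr_0^1\relp_{21}=\sypr_0^1$, I would argue as follows. Since $\sypr_0^1$ is a rank-one orthogonal projection, the triple product $\sypr_0^1\sypr_0^2\sypr_0^1$ must be a scalar multiple $\lambda\,\sypr_0^1$ of $\sypr_0^1$. Taking the trace and using cyclicity yields
$$
\lambda \;=\; \Tr(\sypr_0^1\sypr_0^2\sypr_0^1) \;=\; \Tr(\sypr_0^2\sypr_0^1) \;=\; \langle v_0^1,v_0^2\rangle^2,
$$
which is precisely the denominator in the definition of $\relp_{21}$. Dividing through gives the identity.

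For the invariance claim, note that Lemma~\ref{lem22} asserts that $\Tr\sypr_0^2\sypr_0^1$ is already independent of the choice of quantization. A change of Darboux splitting conjugates each $\sypr_0^j$ simultaneously by the same unitary intertwining the two quantizations, so the numerator $\sypr_0^2\sypr_0^1$ transforms by that common conjugation. Its quotient by an invariant scalar is therefore itself invariant. The only point that requires any real thought is this invariance under quantization, and it reduces cleanly to Lemma~\ref{lem22} together with the transformation properties of model operators established in~\cite{EpsteinMelrose3}; the remainder of the corollary is rank-one linear algebra.
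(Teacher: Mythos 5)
Your proof is correct and follows the same route as the paper: it rests on the rank-one kernel formula~\eqref{7.19.5} for the vacuum projections, the positivity and quantization-independence of $\Tr\sypr_0^2\sypr_0^1$ from Lemma~\ref{lem22}, and the observation that the composed Schwartz kernel remains of Schwartz class, which places $\relp_{21}$ in the Hermite ideal. The only cosmetic difference is that you establish~\eqref{7.19.6} by noting that $\sypr_0^1 A\sypr_0^1$ is automatically a scalar multiple of $\sypr_0^1$ whenever $\sypr_0^1$ is a rank-one orthogonal projection, and then identifying the scalar by cyclicity of the trace, whereas the paper reads the same scalar off directly from the explicit kernel $v_0^j\otimes v_0^{jt}$.
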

\begin{proof} The first statement follows from Lemma~\ref{lem22} and the
  fact that the symbols of the projectors are globally defined. The relation
  in~\eqref{7.19.4} is easily proved using the representations of $\sypr_0^j$
  given in~\eqref{7.19.5}. The fact that $\relp_{21}$ belongs to the Hermite
  ideal is again immediate from the fact that its Schwartz kernel belongs to
  $\cS(\bbR^{2(n-1)}).$ 
\end{proof}
\begin{remark} The relation~\eqref{7.19.6} implies that
\begin{equation}
\sypr_0^1(\relp_{21}\sypr_0^1-\sypr_0^1)=0.
\label{7.19.7}
\end{equation}
An analogous result, which we use in the sequel, holds for generalized conjugate
Szeg\H o projectors. 
\end{remark}

With these preliminaries, we can now complete the proof of
Theorem~\ref{thm1}. For clarity, we use $\esym{eH}{}(\cT^{\eo}_{\pm})(\pm)$ to
denote the model operators with the classical (conjugate) Szeg\H o projection,
and $\esym{eH}{}(\cT^{\prime\eo}_{\pm})(\pm)$ with a generalized Szeg\H o
projection (or generalized conjugate Szeg\H o projection). 
\begin{proposition} If $\sypr_0'$ ($\syprc_0'$) is a generalized (conjugate) Szeg\H o
  projection, which is a deformation of $\sypr_0,$ ($\syprc_0$), then
  $\esym{eH}{}(\cT^{\eo}_{\pm})(\pm)$ are invertible elements of the isotropic
  algebra. The inverses satisfy
\begin{equation}
[\esym{eH}{}(\cT^{\prime\eo}_{+})(+)]^{-1}=[\esym{eH}{}(\cT^{\eo}_{+})(+)]^{-1}+
\left(\begin{matrix} c_{1} & c_2\\ c_3 & 0\end{matrix}\right),
\label{7.19.8}
\end{equation}
if $n$ is even, then
\begin{equation}
[\esym{eH}{}(\cT^{\prime\eo}_{-})(-)]^{-1}=[\esym{eH}{}(\cT^{\eo}_{-})(-)]^{-1}+
\left(\begin{matrix} 0 & c_2\\ c_3 & c_1\end{matrix}\right),
\label{7.19.9}
\end{equation}
and if $n$ is odd, then
\begin{equation}
[\esym{eH}{}(\cT^{\prime\eo}_{-})(-)]^{-1}=[\esym{eH}{}(\cT^{\eo}_{-})(-)]^{-1}+
\left(\begin{matrix} c_1 & c_2\\ c_3 & 0\end{matrix}\right).
\label{7.19.10}
\end{equation}
Here $c_1, c_2, c_3$ are finite rank operators in the Hermite ideal.
\end{proposition}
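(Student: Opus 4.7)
The key observation is that the difference $\sypr'_0 - \sypr_0$ (and similarly $\syprc'_0 - \syprc_0$) is a finite rank operator with Schwartz kernel, so it belongs to the Hermite ideal. I would first compute $F \overset{d}{=} \esym{eH}{}(\cT^{\prime\eo}_\pm)(\pm) - \esym{eH}{}(\cT^{\eo}_\pm)(\pm)$ term-by-term from \eqref{7.14.21}--\eqref{7.15.8} and their classical counterparts. Lemma~\ref{lem7}, asserting $\sypr_0\cD_+ = 0$ in the appropriate block sense (and the analogous identity for $\syprc_0\cD_-$), forces the classical off-diagonal blocks of $\cT^{\eo}_{\pm}$ to simplify; the difference $F$ is then a finite-rank operator in the Hermite ideal with one row (or column) of its $2\times 2$ block structure identically zero. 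In the even pseudoconvex case, for instance, the second row of $F$ vanishes, with top row $(\sypr'_0-\sypr_0,\ 2(\sypr'_0-\sypr_0)\cD_+/\eta_0)$.

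Since $A \overset{d}{=} \esym{eH}{}(\cT^{\eo}_\pm)(\pm)$ was inverted explicitly in the previous section, the problem reduces to inverting the finite rank perturbation $A + F$. I would apply the Sherman--Morrison--Woodbury formula: factoring $F = U\Lambda V^*$ with $\Lambda$ acting on a finite dimensional auxiliary space, one obtains
\begin{equation*}
(A+F)^{-1} = A^{-1} - A^{-1} U (\Lambda^{-1} + V^* A^{-1} U)^{-1} V^* A^{-1},
\end{equation*}
provided the finite dimensional operator $\Lambda^{-1} + V^* A^{-1} U$ is invertible. The resulting correction is manifestly finite rank with kernel in the Hermite ideal, since $\sypr'_0 - \sypr_0$ is.

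The main technical step is verifying invertibility of this Schur complement. Using the explicit expressions \eqref{7.19.13} and \eqref{7.16.11}--\eqref{7.16.12} for the classical partial inverses, together with $\sypr_0\cD_+=0$, the matrix reduces to something whose essential factor is the composition $\sypr'_0\sypr_0$ restricted to $\ran\sypr_0$ (or $\syprc'_0\syprc_0$ restricted to $\ran\syprc_0$). The positivity $\Tr(\sypr_0\sypr'_0) = \langle v_0, v'_0\rangle^2 > 0$ from Lemma~\ref{lem22}, together with the intertwiner $\relp_{21}$ of Corollary~\ref{cor2}, provides precisely this invertibility: up to the positive scalar $c = \Tr(\sypr_0\sypr'_0)$, $\relp_{21}$ is the required inverse.

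For the block-structure claim --- vanishing of the $(2,2)$ block of the correction, or the $(1,1)$ block in the $n$-even pseudoconcave case --- I would bypass Woodbury and solve $(A+F)(u,v) = (a,b)$ directly, showing that the $v$-component still depends on $a$ alone, exactly as in \eqref{7.19.13}. Concretely, when $a=0$, the apparent dependence of $v$ on $b$ through the first row of the system is absorbed into a correction $u_0 \in \ran\sypr_0 = \ker\cD_+$ satisfying $\sypr'_0\sypr_0\, u_0 = -\sypr'_0[\cD_+^{\even}]^{-1}b$, which is uniquely solvable thanks to the same $\sypr'_0\sypr_0\restrictedto_{\ran\sypr_0}$-invertibility invoked above; the choice $v = 0$ then satisfies the full system. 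The main obstacle I anticipate is the case-by-case bookkeeping across the four parities and the $n$-even versus $n$-odd distinction on the pseudoconcave side, but once the Woodbury setup is in place each case reduces to a routine linear-algebra verification.
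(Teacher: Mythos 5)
Your Woodbury-based strategy is sound and leads to the same conclusion, but it is a genuinely different route from the paper's. The paper does not invoke Sherman--Morrison--Woodbury at all: it first observes that, since the perturbation $F$ is finite rank, the new model operator remains Fredholm of index zero (by Proposition~\ref{prop12}), so it suffices to construct a \emph{left} inverse; it then constructs that left inverse by directly solving the block system \eqref{7.19.11}, case by case, using the auxiliary vector $\alpha_1=(\relp_{21}-\sypr_0)\Pi_0 a$ (and its conjugate analogue $\beta_1$) to absorb the mismatch between $\sypr_0'$ and $\sypr_0$. What you and the paper share is the crucial technical input: $\relp_{21}$ and the positivity $\Tr(\sypr_0\sypr_0')=\langle z_0,z_0'\rangle^2>0$ from Lemma~\ref{lem22} and Corollary~\ref{cor2}, which is exactly what makes your Schur complement (or the paper's $\alpha_1$) well defined. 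Your approach has the advantage of packaging the correction as a single Woodbury formula, making the ``finite rank correction in the Hermite ideal'' assertion structurally transparent; the paper's direct solution has the advantage that it produces the vanishing $(2,2)$ (resp.\ $(1,1)$) block of the inverse automatically, without the separate argument you sketch at the end, and it sidesteps the bookkeeping needed to factor the infinite-rank-space operator $\sypr_0'-\sypr_0$ through a genuinely finite-dimensional intermediary space as Woodbury formally requires. Be aware that, as you set it up, $F$ factors through the rank-$\le 2$ operator $\sypr_0'-\sypr_0$ acting on a function space, so the reduction to a finite-dimensional Schur complement needs a concrete basis for $\operatorname{span}\{z_0,z_0'\}$; once you make that explicit the rest of your outline goes through, and the case-by-case parity bookkeeping you anticipate is indeed the only remaining work.
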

\begin{proof}
The arguments for the different cases are very similar. We give the details for
one $+$ case and one $-$ case and formul\ae\ for the answers in representative
cases. In these formul\ae\ we let $z_0$ denote the unit vector spanning the
range of $\sypr_0$ and $z_0',$ the unit vector spanning the range of
$\sypr_0'.$

Proposition~\ref{prop12} implies that $\esym{eH}{}(\cT^{\prime\eo}_{\pm})(\pm)$
are Fredholm operators. Since the differences 
$$\esym{eH}{}(\cT^{\prime\eo}_{\pm})(\pm)-\esym{eH}{}(\cT^{\eo}_{\pm})(\pm)$$
are finite rank operators, it follows that
$\esym{eH}{}(\cT^{\prime\eo}_{\pm})(\pm)$ have index zero. It therefore
suffices to construct a left inverse.

We begin with the $+$ even case by rewriting the equation
\begin{equation}
\esym{eH}{}(\cT^{\prime\even}_{+})(+)\left[\begin{matrix} u\\
    v\end{matrix}\right]
=\left[\begin{matrix} a\\ b\end{matrix}\right],
\end{equation}
as
\begin{equation}
\begin{split}
\left[\begin{matrix}\sypr_0' & 0\\0 &\bzero\end{matrix}\right][u+\cD_+^{\odd} v]&=
\left[\begin{matrix}\sypr_0' & 0\\0 &\bzero\end{matrix}\right]a\\
\left[\begin{matrix}1-\sypr_0' & 0\\0 &\Id\end{matrix}\right]\cD_+^{\odd} v&=
-\left[\begin{matrix}1-\sypr_0' & 0\\0 &\Id\end{matrix}\right]a\\
\cD_+^{\even} u + (\ho-(n-1))v&=b.
\end{split}
\label{7.19.11}
\end{equation}
We solve the middle equation in~\eqref{7.19.11} first. Let
\begin{equation}
\alpha_1=(\frac{z_0'\otimes z_0^t}{\langle z_0',z_0\rangle}-\sypr_0)\Pi_0 a,
\label{7.19.15}
\end{equation}
and note that $\sypr_0\alpha_1=0.$ Corollary~\ref{cor2} shows that this model operator
provides a globally defined symbol. The section $v$ is determined as the unique
solution to
\begin{equation}
\cD_+^{\odd}v=-(\sh{a}-\alpha_1).
\end{equation}
By construction $(1-\sypr_0')(a_0+\alpha_1)=0$ and therefore the second
equation is solved. The section $\sh{u}$ is now uniquely determined by the last
equation in~\eqref{7.19.11}:
\begin{equation}
\sh{u}=[\cD_{+}^{\even}]^{-1}(b+(\ho-(n-1)))[\cD_+^{\odd}]^{-1}(\sh{a}-\alpha_1)).
\end{equation}
This leaves only the first equation, which we rewrite as
\begin{equation}
\left[\begin{matrix}\sypr_0' & 0\\0 &\bzero\end{matrix}\right]u_0=
\left[\begin{matrix}\sypr_0' & 0\\0 &\bzero\end{matrix}\right](a-\cD_+^{\odd}
v-\sh{u}).
\end{equation}
It is immediate that
\begin{equation}
u_0=\frac{z_0\otimes z_0^{\prime t}}{\langle z_0,z_0'\rangle}\Pi_0(a-\cD_+^{\odd}
v-\sh{u}).
\end{equation}
By comparing these equations to those in~\eqref{7.19.13} we see that
$[\esym{eH}{}(\cT^{\prime\even}_{+})(+)]^{-1}$ has the required form. The
finite rank operators are finite sums of terms involving $\sypr_0,$ $z_0\otimes
z_0^{\prime t}$ and $z_0^{\prime t}\otimes z_0,$ and are therefore in the
Hermite ideal.

The solution in the $+$ odd case is given by
\begin{equation}
\begin{split}
v&=[\cD_+^{\odd}]^{-1}(\sh{a}-\alpha_1)\\
\sh{u} &=[\cD_+^{\even}]^{-1}[(\ho+(n-1))v-b]\\
u_0&=\frac{z_0\otimes z_0^{\prime t}}{\langle z_0,z_0'\rangle}\Pi_0(a+\cD_+^{\odd}
v-\sh{u})
\end{split}
\label{7.19.14}
\end{equation}
As before $\alpha_1$ is given by~\eqref{7.19.15}. Again the inverse of
$\esym{eH}{}(\cT^{\prime\odd}_{+})(+)$ has the desired form.

In the $-$ case, the computations are nearly identical for $n$ odd. We leave
the details to the reader, and conclude by providing the solution for $n$
even. We let $\bz_0$ and $\bz_0'$ denote unit vectors spanning the ranges of
$\syprc_0$ and $\syprc_0'$ respectively. We let
\begin{equation}
\beta_1=(\frac{\bz_0'\otimes \bz_0^t}{\langle
  \bz_0',\bz_0\rangle}-\syprc_0)\Pi_{n-1}b
\label{7.19.16}
\end{equation}

The solution to
\begin{equation} 
[\esym{eH}{}(\cT^{\prime\even}_{-})(-)]^{-1}\left[\begin{matrix} u\\
    v\end{matrix}\right]
=\left[\begin{matrix} a\\ b\end{matrix}\right],
\end{equation}
is given by
\begin{equation}
\begin{split}
u &=[\cD_-^{\even}]^{-1}(\sh{b}-\beta_1)\\
\sh{v}&=[\cD_-^{\odd}]^{-1}((\ho-(n-1))u-a)\\
v_0&=\frac{\bz_0\otimes \bz_0^{\prime t}}{\langle
  \bz_0,\bz_0'\rangle}\Pi_{n-1}
(b-\cD_-^{\even}u-\sh{v}).
\end{split}
\end{equation}
The result for $\cT^{\prime\odd}_-$ is
\begin{equation}
\begin{split}
u &=-[\cD_-^{\even}]^{-1}(\sh{b}-\beta_1)\\
\sh{v}&=[\cD_-^{\odd}]^{-1}(a-(\ho-(n-1))u)\\
v_0&=\frac{\bz_0\otimes \bz_0^{\prime t}}{\langle
  \bz_0,\bz_0'\rangle}\Pi_{n-1}
(b+\cD_-^{\even}u-\sh{v}).
\end{split}
\end{equation}
We leave the computations in the case of $n$ odd to the reader. In all cases we
see that the parametrices have the desired grading and this completes the proof of
the proposition.
\end{proof}

As noted above, the operators $\esym{eH}{}(\cT^{\prime\eo}_{\pm})(\pm)$ are
Fredholm operators of index zero. Hence, Solvability of the equations
\begin{equation}
\esym{eH}{}(\cT^{\prime\eo}_{\pm})(\pm) \left[\begin{matrix} u\\
    v\end{matrix}\right]
=\left[\begin{matrix} a\\ b\end{matrix}\right],
\end{equation}
for all $[ a, b]$ implies the uniqueness and therefore the invertibility of the
model operators. This completes the proof of Theorem~\ref{thm1}. We now turn to
applications of these results.

\section{The Fredholm property}
Let $\cD$ be a (pseudo)differential operator acting on smooth sections of $F\to
X,$ and $\cB$ a (pseudodifferential) boundary operator acting on sections of
$F\restrictedto_{bX}.$ The pair $(\cD,\cB)$ is the densely defined operator,
$\sigma\mapsto \cD\sigma,$ acting on sections of $F,$ smooth on $\bX,$ that satisfy
\begin{equation}
\cB[\sigma]_{bX}=0.
\end{equation}
The notation $\overline{(\cD,\cB)}$ is the closure of $(\cD,\cB)$ in the graph
norm
\begin{equation}
\|\sigma\|_{\cD}^2=\|\cD\sigma\|_{L^2}^2+\|\sigma\|_{L^2}^2.
\end{equation}
We let $H_{\cD}$ denote the domain of the closure, with norm defined by
$\|\cdot\|_{\cD}.$ The following general result about Dirac operators, proved
in~\cite{BBW}, is useful for our analysis:
\begin{proposition}\label{prop_bbw} Let $X$ be a compact manifold with boundary
  and $\cD$ an operator of Dirac type acting on sections of $F\to X.$ The trace
  map from smooth sections of $F$ to sections of $F\restrictedto_{bX},$
$$\sigma\mapsto \sigma\restrictedto_{bX},$$
extends to define a continuous map from $H_{\cD}$ to
$H^{-\ha}(bX;F\restrictedto_{bX}).$
\end{proposition}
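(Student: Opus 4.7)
The plan is to establish the trace bound on smooth sections in the graph norm and then invoke density. Since $H_{\cD}$ is defined as the graph closure of $\CI(X;F)$, it suffices to show
\begin{equation*}
\|\sigma\restrictedto_{bX}\|_{H^{-\ha}(bX)}\leq C\|\sigma\|_{\cD}\quad\text{for all }\sigma\in\CI(X;F),
\end{equation*}
after which the trace map extends continuously by density. The natural way to produce an $H^{-\ha}$ norm is by dualizing against $H^{\ha}(bX)$; this is what I would do.

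The key ingredient is Green's formula for the Dirac-type operator $\cD$: for $\sigma,\tau\in\CI(X;F)$,
\begin{equation*}
\langle\cD\sigma,\tau\rangle_{X}-\langle\sigma,\cD^*\tau\rangle_{X}=\langle J\,\sigma\restrictedto_{bX},\tau\restrictedto_{bX}\rangle_{bX},
\end{equation*}
where $J=\sigma_1(\cD,d\rho/|d\rho|)$ is the Clifford action of the outward conormal. Because $\cD$ is of Dirac type, $J$ is a pointwise bundle isomorphism on $F\restrictedto_{bX}$; in particular, so is $J^*$, and multiplication by $J^{-*}$ is a bounded isomorphism on every $H^{s}(bX;F\restrictedto_{bX})$.

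Now, given a test section $\phi\in\CI(bX;F\restrictedto_{bX})$, set $\psi=J^{-*}\phi$ and use the standard trace right-inverse $H^{\ha}(bX)\to H^{1}(X)$ to choose an extension $\tau\in H^{1}(X;F)$ with $\tau\restrictedto_{bX}=\psi$ and $\|\tau\|_{H^{1}(X)}\leq C\|\psi\|_{H^{\ha}(bX)}\leq C'\|\phi\|_{H^{\ha}(bX)}$. Green's formula and Cauchy--Schwarz then give
\begin{equation*}
|\langle\sigma\restrictedto_{bX},\phi\rangle_{bX}|=|\langle J\sigma\restrictedto_{bX},\psi\rangle_{bX}|\leq\|\cD\sigma\|_{L^2}\|\tau\|_{L^2}+\|\sigma\|_{L^2}\|\cD^*\tau\|_{L^2},
\end{equation*}
and since $\cD^*$ is first order, $\|\cD^*\tau\|_{L^2}\leq C\|\tau\|_{H^{1}}$. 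Combining the estimates,
\begin{equation*}
|\langle\sigma\restrictedto_{bX},\phi\rangle_{bX}|\leq C\|\sigma\|_{\cD}\|\tau\|_{H^{1}(X)}\leq C''\|\sigma\|_{\cD}\|\phi\|_{H^{\ha}(bX)},
\end{equation*}
and taking the supremum over $\phi$ with $\|\phi\|_{H^{\ha}}\leq 1$ yields the desired bound, which extends $\sigma\mapsto\sigma\restrictedto_{bX}$ to a continuous map $H_{\cD}\to H^{-\ha}(bX;F\restrictedto_{bX})$.

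The only genuine technicalities are the verification of Green's formula with the correct boundary term (a routine integration by parts in a collar, using a boundary-adapted splitting $\cD=J(\pa_t+\text{tangential})$) and invertibility of $J$, which is immediate for Dirac-type operators since $\sigma_1(\cD,\xi)^2=|\xi|^2\Id$. Everything else is a soft duality argument, so I do not expect a genuine obstacle; the only place one might hope to sharpen the argument is to track whether the constant $C$ is geometric, but for the stated qualitative continuity result this is unnecessary.
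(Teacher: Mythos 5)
Your proof is correct and is the standard Green's-formula-plus-duality argument for this trace estimate; the paper itself does not prove the proposition but simply cites it from Booss-Bavnbek and Wojciechowski, where essentially this argument is given (the essential ingredients being the surjectivity of the trace $H^1(X)\to H^{\ha}(bX)$, the pointwise invertibility of $\sigma_1(\cD,\nu)$ for Dirac-type operators, and Green's formula extended by density to $\tau\in H^1$). The one point worth making explicit, since $H_{\cD}$ in the paper is the graph closure of smooth sections satisfying the boundary condition $\cB[\sigma]_{bX}=0$, is that your estimate holds for \emph{all} $\sigma\in\CI(\bX;F)$ and hence a fortiori for the dense subset on which $\cB$ vanishes at the boundary; the extension to $H_{\cD}$ then follows by completeness of $H^{-\ha}(bX)$ exactly as you say, so there is no gap.
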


The results of the previous sections show that the operators
$\cT^{\prime\eo}_{\pm}$ are elliptic elements in the extended Heisenberg
calculus. We now let $\cU^{\prime\eo}_{\pm}$ denote a left and right parametrix
defined so that
\begin{equation}
\begin{split}
&\cU^{\prime\eo}_{\pm}\cT^{\prime\eo}_{\pm}=\Id+K_1\\
&\cT^{\prime\eo}_{\pm}\cU^{\prime\eo}_{\pm}=\Id+K_2,
\end{split}
\label{7.22.1}
\end{equation}
with $K_1, K_2$ finite rank smoothing operators.  The principal symbol
computations show that $\cU^{\prime\eo}_{\pm}$ has classical order $0$ and
Heisenberg order at most $1.$  Such an operator defines a bounded map from
$H^{\ha}(bX)$ to $L^2(bX).$ This follows because such  operators are
contained in $\Psi_{eH}^{\ha,1,1}.$ If $\Delta$ is a positive (elliptic) Laplace
operator, then $\cL=(\Delta+1)^{\frac 14}$ lifts to define an invertible
elliptic element of this operator class. An operator $A\in\Psi_{eH}^{\ha,1,1}$
can be expressed in the form
\begin{equation}
A=A'\cL\text{ where } A'\in\Psi_{eH}^{0,0,0}.
\end{equation}
It is shown in~\cite{EpsteinMelrose3}, that operators in $\Psi_{eH}^{0,0,0}$
act boundedly on $H^s,$ for all real $s.$ This proves the following result:
\begin{proposition}\label{prop15} The operators $\cU^{\prime\eo}_{\pm}$ define bounded maps
  from $H^{s}(bX;F)$ to $H^{s-\ha}(bX;F)$ for $s\in\bbR.$ Here $F$ is an
  appropriate vector bundle over $bX.$
\end{proposition}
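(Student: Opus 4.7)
The plan is to verify that $\cU^{\prime\eo}_{\pm}$ lies in a symbol class whose boundedness between Sobolev spaces is already established, then invoke a factorization through an elliptic reference operator to get the $\ha$ loss.

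First I would collect the order information for the parametrix. By Theorem~\ref{thm1}, the block matrices of Heisenberg orders for the parametrices are $\left(\begin{smallmatrix} 0 & 1\\ 1 & 1\end{smallmatrix}\right)$ in the pseudoconvex case and in the pseudoconcave case when $n$ is odd, and $\left(\begin{smallmatrix} 1 & 1\\ 1 & 0\end{smallmatrix}\right)$ in the pseudoconcave case with $n$ even. Moreover, away from the contact directions the operators $\cT^{\prime\eo}_\pm$ are classically elliptic of order zero by Propositions~\ref{prp8} and~\ref{prp9}, so their parametrices have classical order $0$. Putting these facts together, I would conclude that $\cU^{\prime\eo}_\pm$ is an element of the symbol class $\Psi_{eH}^{0,1,1}$, which sits inside $\Psi_{eH}^{\ha,1,1}$.

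Next I would introduce the reference operator $\cL=(\Delta+1)^{\frac 14}$, with $\Delta$ a positive elliptic Laplacian on $bX$. As claimed in the text, $\cL$ lifts to an invertible elliptic element of $\Psi_{eH}^{\ha,1,1}$, so composition on the right with $\cL^{-1}$ produces a factorization
\begin{equation}
\cU^{\prime\eo}_{\pm}=(\cU^{\prime\eo}_{\pm}\cL^{-1})\cL,
\end{equation}
in which the first factor lies in $\Psi_{eH}^{0,0,0}$ by the symbolic composition rules of the extended Heisenberg calculus from~\cite{EpsteinMelrose3}.

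Finally I would appeal to the $H^s$-boundedness of operators in $\Psi_{eH}^{0,0,0}$, also established in~\cite{EpsteinMelrose3}: such operators act boundedly on $H^s(bX;F)$ for every $s\in\bbR$. Since $\cL$ is a classical elliptic pseudodifferential operator of order $\ha$, it maps $H^s(bX;F)\to H^{s-\ha}(bX;F)$ continuously. Composing these two boundedness statements yields the desired map $H^s\to H^{s-\ha}$. The only real subtlety is the careful verification that the parametrix actually lies in $\Psi_{eH}^{\ha,1,1}$ rather than a larger class; this hinges crucially on the vanishing of the would-be order-$2$ block noted in the remark following Theorem~\ref{thm1}, without which the loss would be $1$ rather than $\ha$ and the proposition would fail.
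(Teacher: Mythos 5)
Your proposal is correct and follows essentially the same route as the paper: read off the orders of $\cU^{\prime\eo}_\pm$ from Theorem~\ref{thm1} (with the vanishing of the would-be order-$2$ block supplying the crucial reduction to Heisenberg order $1$), factor through $\cL=(\Delta+1)^{1/4}$, and invoke the $H^s$-boundedness of $\Psi_{eH}^{0,0,0}$ from~\cite{EpsteinMelrose3}. The only cosmetic difference is that you place $\cU^{\prime\eo}_\pm$ in $\Psi_{eH}^{0,1,1}$ before enlarging to $\Psi_{eH}^{\ha,1,1}$, while the paper states the latter containment directly.
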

\begin{remark} Various similar results appear in the literature, for
  example in~\cite{Greiner-Stein1} and~\cite{Beals-Stanton}. While the simple
  result in the proposition is adequate for our purposes, much more precise,
  anisotropic estimates can also be deduced.
\end{remark}

The mapping properties of the boundary parametrices allow us to show that the
graph closures of the operators $(\eth^{\eo}_{\pm},\cR^{\prime\eo}_{\pm})$ are
Fredholm. As usual $E\to X$ is a compatible complex vector bundle. Except when
needed for clarity, the explicit dependence on $E$ is suppressed. 
\begin{theorem}\label{thm2} Let $X$ be a strictly pseudoconvex (pseudoconcave)
  manifold. The graph closures of $(\eth^{\eo}_{E+},\cR^{\prime\eo}_{E+}),$  
($(\eth^{\eo}_{E-},\cR^{\prime\eo}_{E-})$), respectively, are Fredholm operators.
\end{theorem}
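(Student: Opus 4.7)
The plan is to reduce Fredholm-ness of $(\eth^{\eo}_{E\pm},\cR^{\prime\eo}_{E\pm})$ to the extended-Heisenberg ellipticity of $\cT^{\prime\eo}_{\pm}$ already established in Theorem~\ref{thm1}, by assembling an explicit two-sided parametrix out of the three available ingredients: the inverse $Q^{\eo}_E$ of $\eth^{\eo}_E$ on the invertible double $\tX$, the Poisson operator $\cK^{\eo}_{E\pm}$ of Section~\ref{ss.2}, and the parametrix $\cU^{\prime\eo}_{\pm}$ for $\cT^{\prime\eo}_{\pm}$ supplied by~\eqref{7.22.1}. For $\sigma\in L^2(X;\Spn^{\eo}\otimes E)$ I would set
\begin{equation*}
\cG^{\eo}_{\pm}\sigma = rQ^{\eo}_E\tilde\sigma - \cK^{\eo}_{E\pm}\,\cU^{\prime\eo}_{\pm}\,\cR^{\prime\eo}_{\pm}\,\gamma_0\bigl(rQ^{\eo}_E\tilde\sigma\bigr),
\end{equation*}
where $\tilde\sigma$ denotes an extension of $\sigma$ to $\tX$, and then verify that $\cG^{\eo}_{\pm}$ inverts $(\eth^{\eo}_{E\pm},\cR^{\prime\eo}_{E\pm})$ modulo compact errors.

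Concretely I would check in turn that $\cG^{\eo}_{\pm}$ maps $L^2(X)$ into the graph-closure domain $H_{\eth}$, that $\cR^{\prime\eo}_{\pm}\gamma_0\cG^{\eo}_{\pm}\sigma=0$ modulo smoothing, and that $\eth^{\eo}_{E\pm}\cG^{\eo}_{\pm}\sigma=\sigma$ modulo smoothing. The first point combines Proposition~\ref{prop15} with the observation that $\cK^{\eo}_{E\pm}\psi\in\Ker\eth^{\eo}\subset H_{\eth}$ for any $\psi\in L^2(bX)$. The second rests on the algebraic identity
\begin{equation*}
\cR^{\prime\eo}_{\pm}\cT^{\prime\eo}_{\pm} = \cR^{\prime\eo}_{\pm}\cP^{\eo}_{\pm} = \cT^{\prime\eo}_{\pm}\cP^{\eo}_{\pm},
\end{equation*}
which follows from~\eqref{eqn6.30.1} and the idempotency of $\cR^{\prime\eo}_{\pm}$ and $\cP^{\eo}_{\pm}$; combining it with~\eqref{7.22.1} yields $\cR^{\prime}\cP\cU^{\prime}\cR^{\prime}\phi_0 = \cR^{\prime}\phi_0 + \cR^{\prime}K_2\cR^{\prime}\phi_0$ with $\phi_0 = \gamma_0(rQ^{\eo}_E\tilde\sigma)$, so the boundary condition holds up to the finite-rank smoothing operator $K_2$. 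The third point is immediate from $\eth\cK=0$ and $\eth Q=\Id$. A symmetric computation using $K_1$ in place of $K_2$ produces a left parametrix; all the resulting errors are compact on $L^2(X)$ and on $H_{\eth}$ by Rellich compactness together with Propositions~\ref{prop15} and~\ref{prop_bbw}, whence the Fredholm conclusion by the standard abstract criterion.

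The main obstacle is the low boundary regularity of sections in $H_{\eth}$: Proposition~\ref{prop_bbw} gives only $\gamma_0:H_{\eth}\to H^{-\ha}(bX)$, so for the left parametrix one must verify that $\cK^{\eo}_{E\pm}\cU^{\prime\eo}_{\pm}\cR^{\prime\eo}_{\pm}\gamma_0 u$ is well defined for $u\in H_{\eth}$ and actually lies in $H_{\eth}$ itself. This is precisely where Proposition~\ref{prop15} is essential: the half-derivative loss of $\cU^{\prime\eo}_{\pm}$ on the boundary must be balanced against the half-derivative gain of the Poisson operator, so that the composed correction inherits enough regularity to be paired with $\eth^{\eo}_{E\pm}$ in $L^2(X)$. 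A secondary task is to check that the smoothing errors from~\eqref{7.22.1}, a priori defined only on boundary sections, translate into genuinely compact (rather than merely continuous) errors on the interior spaces after composition with $\cK^{\eo}_{E\pm}$ and $\gamma_0$; this follows from the smoothing nature of both $\cK^{\eo}_{E\pm}$ and $K_1, K_2$ combined with the compactness of the Sobolev embeddings on the compact manifold $X$.
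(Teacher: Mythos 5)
Your parametrix $\cG^{\eo}_{\pm}$ is exactly the operator the paper builds in its proof of Theorem~\ref{thm2} (compare $u=u_0+u_1$ with $u_1=Q^{\even}f$, $u_0=-\cK\cU^{\prime\even}_+\cR^{\prime\even}_+[u_1]_{bX}$), and your identity $\cR^{\prime}\cT^{\prime}=\cR^{\prime}\cP=\cT^{\prime}\cP$ is precisely what Lemma~\ref{lemm77} packages. However, the way you assemble these ingredients into a Fredholm conclusion has two genuine gaps. First, the statement that $\eth^{\eo}_{E\pm}\cG^{\eo}_{\pm}\sigma=\sigma$ holds only ``modulo smoothing'' mislocates the defect: this identity is \emph{exact}, since $Q^{\eo}$ is an exact inverse on $\tX$ and $\cK^{\eo}_{E\pm}$ maps into $\Ker\eth^{\eo}_{E\pm}$. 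The only defect is on the boundary, where your own computation gives $\cR^{\prime\eo}_{\pm}\gamma_0\cG^{\eo}_{\pm}\sigma=-\cR^{\prime}K_2\cR^{\prime}\gamma_0(rQ\tilde\sigma)$, generically nonzero. Consequently $\cG^{\eo}_{\pm}\sigma$ does \emph{not} lie in $\Dom(\overline{(\eth^{\eo}_{\pm},\cR^{\prime\eo}_{\pm})})$ at all, because Proposition~\ref{prop_bbw} forces $\cR^{\prime\eo}_{\pm}[u]_{bX}=0$ for every $u$ in that domain. The ``standard abstract criterion'' you invoke needs the right parametrix to land in the domain, so it cannot be applied as stated. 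The paper closes this gap by restricting first to the closed, finite-codimensional subspace $\{f:K_2\cR^{\prime\eo}_{\pm}[Q^{\eo}f]_{bX}=0\}$, where Lemma~\ref{lemm77} makes the boundary condition hold \emph{exactly}, and then passing to general $f$ in this subspace by a density argument in the graph norm. You would need either that restriction or an explicit finite-rank correction of $\cG^{\eo}_{\pm}$; neither appears in your sketch.

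Second, the ``symmetric computation using $K_1$'' is not symmetric, and indeed the finite dimensionality of the kernel is proved by a different calculation. For $u$ in the kernel of the closure one uses $\eth^{\eo}_{\pm}u=0$ to get $\cP^{\eo}_{\pm}[u]_{bX}=[u]_{bX}$; combined with $\cR^{\prime\eo}_{\pm}[u]_{bX}=0$ this gives $\cT^{\prime\eo}_{\pm}[u]_{bX}=0$, so applying $\cU^{\prime\eo}_{\pm}$ yields $(\Id+K_1)[u]_{bX}=0$, and finite rank of $K_1$ together with unique continuation bounds $\dim\Ker$. This step needs Proposition~\ref{prop_bbw} twice: to make sense of $[u]_{bX}\in H^{-\ha}(bX)$ and to pass the boundary condition to the weak limit from the approximating smooth sequence. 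None of this is present in your proposal, so the kernel estimate is genuinely missing.
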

\begin{proof} The argument is formally identical for all the different cases, so we do
  just the case of $(\eth^{\even}_+,\cR^{\prime\even}_+).$ As before $Q^{\even}$ is a
  fundamental solution for $\eth^{\even}_+$ and $\cK$ is the Poisson kernel
  mapping the range of $\cP^{\even}_+$ into the null space of
  $\eth^{\even}_{+}.$ We need to show that the range of the closure is closed,
  of finite codimension, and that the null space is finite dimensional. 

Let $f$ be an $L^2$-section of $\Lambda^{\odd}\otimes E;$ with
\begin{equation}
u_1=Q^{\even} f\text{ and
}u_0=-\cK\cU^{\prime\even}_+\cR^{\prime\even}_+[u_1]_{bX},
\end{equation} 
we let $u=u_0+u_1.$ Proposition~\ref{prop15} and standard estimates imply that,
for $s\geq 0,$ there are constants $C_{s1} ,C_{s2},$ independent of $f,$ so
that
\begin{equation}
\|u_1\|_{H^{s+1}}\leq C_{s1}\|f\|_{H^{s}},
\quad
\|u_0\|_{H^{s+\ha}}\leq C_{s2}\|f\|_{H^s}.
\label{7.22.4}
\end{equation}

The crux of the matter is to show that $\cR^{\prime\even}_+[u_0+u_1]_{bX}=0.$
For data satisfying finitely many linear conditions, this is a consequence of
the following lemma.
\begin{lemma}\label{lemm77} If $\cT^{\prime\even}_+ v\in\Im\cR^{\prime\even}_+,$ then
\begin{equation}
\cT^{\prime\even}_{+}\cP^{\even}_+ v=\cT^{\prime\even}_{+}v.
\end{equation}
\end{lemma}
\begin{proof}[Proof of the lemma] As
  $(\Id-\cR^{\prime\even}_+)\cT^{\prime\even}_{+}=
 \cT^{\prime\even}_{+}(\Id-\cP^{\even}_+)$ we see that the hypothesis of the
 lemma implies that
\begin{equation}
\cT^{\prime\even}_{+}(\Id-\cP^{\even}_+)v=(\Id-\cR^{\prime\even}_+)\cT^{\prime\even}_{+}v=0.
\end{equation}
The conclusion follows from this relation.
\end{proof}

Since $u_0\in\Ker\eth_+^{\even}$ it follows that
$(\Id-\cP^{\even}_+)[u_0]_{bX}=0,$ 
and therefore the definition of $u_0$ implies that:
\begin{equation}
\begin{split}
\cR^{\prime\even}_{+}[u_0+u_1]_{bX}
&=\cT^{\prime\even}_{+}[u_0]_{bX}+\cR^{\prime\even}_+[u_1]_{bX}\\
&=-\cT^{\prime\even}_+\cP^{\even}_+\cU^{\prime\even}_+\cR^{\prime\even}_+[u_1]_{bX}
+\cR^{\prime\even}_+[u_1]_{bX}.
\end{split}
\label{7.22.55}
\end{equation}
If
\begin{equation}
K_2\cR^{\prime\even}_+[u_1]_{bX}=K_2\cR^{\prime\even}_+[Q^{\even}f]_{bX}=0,
\label{7.22.6}
\end{equation}
then
\begin{equation}
\cT^{\prime\even}_+\cU^{\prime\even}_+\cR^{\prime\even}_+[u_1]_{bX}
=\cR^{\prime\even}_+[u_1]_{bX}\in\Im\cR^{\prime\even}_+.
\end{equation} 
Hence, applying Lemma~\ref{lemm77}, we see that
\begin{equation}
\begin{split}
\cT^{\prime\even}_+\cP^{\even}_+\cU^{\prime\even}_+\cR^{\prime\even}_+[u_1]_{bX}&=
\cT^{\prime\even}_+\cU^{\prime\even}_+\cR^{\prime\even}_+[u_1]_{bX}\\
&=\cR^{\prime\even}_+[u_1]_{bX}
\end{split}
\label{7.22.5}
\end{equation}
Combining~\eqref{7.22.55} and~\eqref{7.22.5} gives the desired result:
\begin{equation}
\cR^{\prime\even}_{+}[u_0+u_1]_{bX}=0.
\end{equation}
It is
also clear that, if $f\in H^s,$  then
$u\in H^{s+\ha}.$ In particular, if $f$ is smooth, then so is $u.$ Hence $u$
belongs to the domain of $(\eth^{\even}_+,\cR^{\prime\even}_+).$ 

The operator $K_2$ is a finite rank smoothing operator, and therefore the
composition 
\begin{equation}
f\mapsto K_2\cR^{\prime\even}_+[Q^e f]_{bX}
\end{equation}
has a kernel of the form
\begin{equation}
\sum_{j=1}^M u_j(x)v_j(y)\text{ for }(x,y)\in bX\times X,
\end{equation}
with
$$u_j\in\CI(bX) \text{ and }v_j\in\CI(\bX).$$
Hence, an $L^2$-section, $f$
satisfying~\eqref{7.22.6} can be obtained as the limit of a sequence of smooth
sections $<f^n>$ that also satisfy this condition. Let $<u^n>$ be the smooth
solutions to
\begin{equation}
\eth^{\even}_+u^n=f^n,\quad \cR^{\prime\even}_+[u^n]_{bX}=0,
\end{equation} 
constructed above.  The estimates in~\eqref{7.22.4} show that
$<u^n>$ converges to a limit $u$ in $H^{\ha}.$ It is also clear that
$\eth^{\even}_+u^n$ converges weakly to $\eth^{\even}_+u,$ and in $L^2$ to $f.$
Therefore $<u^n>$ converges to $u$ in the graph norm. This shows that $u$ is in
the domain of the closure and satisfies $\eth^{\even}_+u=f.$ As the composition
$$f\mapsto K_2\cR^{\prime\even}_+[Q^{\even} f]_{bX},$$
is bounded, it follows that the range of 
$\overline{(\eth^{\even}_+,\cR^{\prime\even}_+)}$ contains a closed subspace of finite
codimension and is therefore also a closed subspace of finite codimension. 

To complete the proof of the theorem we need to show that the null space is
finite dimensional. Suppose that $u$ belongs to the null space of
$\overline{(\eth^{\even}_+,\cR^{\prime\even}_+)}.$ This implies that there is a
sequence of smooth sections $<u^n>$ in the domain of the operator, converging
to $u$ in the graph norm, such that $\|\eth^{\even}_+u^n\|_{L^2}$ converges to
zero. Hence $\eth^{\even}_+ u=0$ in the weak sense. Proposition~\ref{prop_bbw}
shows that $u$ has boundary values in $H^{-\ha}(bX)$ and that, in the sense of
distributions,
$$\cR^{\prime\even}_+[u]_{bX}=\lim_{n\to\infty}\cR^{\prime\even}_+[u^n]_{bX}=0.$$
Since $u$ is in the null space of $\eth^{\even}_+,$ it is also the
case that $\cP^{\even}_+[u]_{bX}=[u]_{bX}.$ These two facts imply that
$\cT^{\prime\even}_+[u]_{bX}=0.$ Composing on the left with
$\cU^{\prime\even}_{+}$ shows that
\begin{equation}
(\Id+K_1)[u]_{bX}=0.
\end{equation}
As $K_1$ is a finite rank smoothing operator, we conclude that $[u]_{bX}$ and
therefore $u$ are smooth. By the unique continuation property for Dirac
operators, the dimension of the null space of
$\overline{(\eth^{\even}_{+},\cR^{\prime\even}_+)}$ is bounded by the dimension
of the null space of $(\Id+K_1).$ This completes the proof of the assertion
that $\overline{(\eth^{\even}_+,\cR^{\prime\even}_+)}$ is a Fredholm operator. The
proofs in the other cases, are up to minor changes in notation, identical.
\end{proof}
\begin{remark} In the proof of the theorem we have constructed right parametrices
  $\cQ^{\prime\eo}_{\pm}$ for the boundary value problems
  $(\eth^{\eo}_{\pm},\cR^{\prime\eo}_{\pm}),$ which gain a half a derivative.
\end{remark}
  
We close this section with Sobolev space estimates for the operators
$(\eth^{\eo}_{\pm},\cR^{\prime\eo}_{\pm}).$
\begin{theorem}\label{thm33} Let $X$ be a strictly pseudoconvex (pseudoconcave)
  manifold, and $E\to X$ a compatible complex vector bundle. For each $s\geq 0,$
  there is a positive constant $C_s$ such that if $u$ is an $L^2$-solution to
$$\eth^{\eo}_{E\pm} u = f\in H^s(X)\text{ and }\cR^{\prime\eo}_{E\pm}[u]_{bX}=0$$
in the sense of distributions, then
\begin{equation}
\|u\|_{H^{s+\ha}}\leq C_s[\|\eth^{\eo}_{E\pm} u\|_{H^s}+\|u\|_{L^2}].
\label{7.22.100}
\end{equation}
\end{theorem}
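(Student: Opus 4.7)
The plan is to split $u$ into an inhomogeneous piece $u_1$ produced by the ambient inverse $Q^{\eo}_{E\pm}$ and a null-solution piece $w=u-u_1$, then to recover the missing regularity for $[w]_{bX}$ by inverting the resulting boundary equation with the extended Heisenberg parametrix $\cU^{\prime\eo}_{E\pm}$ from \eqref{7.22.1}. The key observation is that the $\ha$-gain in \eqref{7.22.100} does not come from $\cU^{\prime\eo}_{E\pm}$, which by Proposition~\ref{prop15} \emph{loses} $\ha$ in the classical Sobolev scale, but from the boundary-to-interior gain $\cK^{\eo}_{E\pm}\colon H^s(bX)\to H^{s+\ha}(X)$ of the Poisson operator.

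Concretely, I would extend $f$ to $\tilde f\in H^s(\tX)$ with $\|\tilde f\|_{H^s(\tX)}\lesssim \|f\|_{H^s(X)}$, set $u_1=rQ^{\eo}_{E\pm}\tilde f$ and $w=u-u_1$. Since $Q^{\eo}_{E\pm}$ is classical of order $-1$, one has $\|u_1\|_{H^{s+1}(X)}\lesssim\|f\|_{H^s}$ and hence $\|[u_1]_{bX}\|_{H^{s+\ha}(bX)}\lesssim\|f\|_{H^s}$ by the trace theorem. The section $w$ is an $L^2$ null solution of $\eth^{\eo}_{E\pm}$ in $X$: by Proposition~\ref{prop_bbw} its boundary trace lies in $H^{-\ha}(bX)$ with $\|[w]_{bX}\|_{H^{-\ha}}\lesssim\|w\|_{L^2}\lesssim\|u\|_{L^2}+\|f\|_{H^s}$, and by the standard properties of the Calderon projector for Dirac-type operators $\cP^{\eo}_{E\pm}[w]_{bX}=[w]_{bX}$ and $w=\cK^{\eo}_{E\pm}[w]_{bX}$. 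The homogeneous boundary condition $\cR^{\prime\eo}_{E\pm}[u]_{bX}=0$ reads $\cR^{\prime\eo}_{E\pm}[w]_{bX}=-\cR^{\prime\eo}_{E\pm}[u_1]_{bX}$, and combining with $\cP^{\eo}_{E\pm}[w]_{bX}=[w]_{bX}$ gives, in view of the definition \eqref{eqn6.30.1},
\begin{equation*}
\cT^{\prime\eo}_{E\pm}[w]_{bX}=\cR^{\prime\eo}_{E\pm}\cP^{\eo}_{E\pm}[w]_{bX}+(\Id-\cR^{\prime\eo}_{E\pm})(\Id-\cP^{\eo}_{E\pm})[w]_{bX}=-\cR^{\prime\eo}_{E\pm}[u_1]_{bX}.
\end{equation*}
Applying the left parametrix from \eqref{7.22.1} yields $[w]_{bX}=-\cU^{\prime\eo}_{E\pm}\cR^{\prime\eo}_{E\pm}[u_1]_{bX}-K_1[w]_{bX}$.

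I would then chain the Sobolev bounds. The operator $\cR^{\prime\eo}_{E\pm}$ has Heisenberg order zero and is therefore bounded on every classical Sobolev space, so $\cR^{\prime\eo}_{E\pm}[u_1]_{bX}\in H^{s+\ha}(bX)$ with norm $\lesssim\|f\|_{H^s}$; Proposition~\ref{prop15} then gives $\cU^{\prime\eo}_{E\pm}\cR^{\prime\eo}_{E\pm}[u_1]_{bX}\in H^s(bX)$ with the same bound. The smoothing finite-rank term is controlled by $\|K_1[w]_{bX}\|_{H^s(bX)}\lesssim\|[w]_{bX}\|_{H^{-\ha}}\lesssim\|u\|_{L^2}+\|f\|_{H^s}$. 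Hence $\|[w]_{bX}\|_{H^s(bX)}\lesssim\|f\|_{H^s}+\|u\|_{L^2}$, and the Poisson operator delivers $\|w\|_{H^{s+\ha}(X)}=\|\cK^{\eo}_{E\pm}[w]_{bX}\|_{H^{s+\ha}(X)}\lesssim\|f\|_{H^s}+\|u\|_{L^2}$. Adding $\|u_1\|_{H^{s+1}}\lesssim\|f\|_{H^s}$ proves \eqref{7.22.100}.

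The main obstacle is justifying the two identities $\cP^{\eo}_{E\pm}[w]_{bX}=[w]_{bX}$ and $w=\cK^{\eo}_{E\pm}[w]_{bX}$ for a merely $L^2$ null solution whose boundary trace is only distributional. These are classical properties of the Calderon projector for operators of Dirac type (cf.\ Chapter~9 of~\cite{BBW}), but applying them rigorously here requires combining Proposition~\ref{prop_bbw} with an approximation of $w$ by smooth null solutions in the graph norm, closely analogous to the closure argument at the end of the proof of Theorem~\ref{thm2}. Once these identities are in place, the rest of the argument is a routine composition of the mapping properties already established for $Q^{\eo}_{E\pm}$, $\cR^{\prime\eo}_{E\pm}$, $\cU^{\prime\eo}_{E\pm}$, and $\cK^{\eo}_{E\pm}$.
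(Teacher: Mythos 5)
Your proposal is correct and follows essentially the same route as the paper's proof: the splitting $u=u_1+(u-u_1)$ with $u_1=Q^{\eo}f$, the identity $\cP^{\eo}_{\pm}[u-u_1]_{bX}=[u-u_1]_{bX}$, the equation $\cT^{\prime\eo}_{\pm}[u-u_1]_{bX}=-\cR^{\prime\eo}_{\pm}[u_1]_{bX}$, application of the parametrix $\cU^{\prime\eo}_{\pm}$ with control of the finite-rank smoothing error by the $H^{-\ha}(bX)$ trace estimate, and then the Poisson-kernel gain from $H^s(bX)$ into $H^{s+\ha}(X)$. You are somewhat more explicit about extending $f$ to $\tX$ and about the regularity needed to apply the Calderon projector identities to an $L^2$ null solution, but these are refinements of detail, not a different argument.
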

\begin{proof} With $u_1=Q^{\eo} f,$ we see that $u_1\in H^{s+1}(X)$ and
$$\eth^{\eo}_{\pm} (u-u_1)=0\text{ with }\cR^{\prime\eo}_{\pm}[u-u_1]_{bX}=
-\cR^{\prime\eo}_{\pm}[u_1]_{bX}.
$$
These relations imply that $\cP^{\eo}_{\pm}[u-u_1]_{bX}=[u-u_1]_{bX}$ and
therefore
\begin{equation}
-\cR^{\prime\eo}_{\pm}[u_1]_{bX}=\cR^{\prime\eo}_{\pm}[u-u_1]_{bX}=
\cT^{\prime\eo}_{\pm}[u-u_1]_{bX}.
\end{equation}

We apply $\cU^{\prime\eo}_{\pm}$ to this equation to deduce that
\begin{equation}
(\Id+K_1)[u-u_1]_{bX}=-\cU^{\prime\eo}_{\pm}\cR^{\prime\eo}_{\pm}[u_1]_{bX}.
\end{equation}
Because $K_1$ is a smoothing operator, Proposition~\ref{prop15} implies that there
is a constant $C_s'.$ so that
\begin{equation}
\|[u-u_1]_{bX}\|_{H^s(bX)}\leq
C_s'[\|u_1\|_{H^{s+\ha}(bX)}+\|[u-u_1]\|_{H^{-\ha}(bX)}].
\label{7.22.101}
\end{equation}
As the Poisson kernel carries $H^{s}(bX)$ to $H^{s+\ha}(X),$ boundedly, this
estimate shows that $u=u-u_1+u_1$ belongs to $H^{s+\ha}(X)$ and that there is a
constant $C_s$ so that
\begin{equation}
\|u\|_{H^{s+\ha}}\leq C_s[\|f\|_{H^s}+\|u\|_{L^2}]
\end{equation}
This proves the theorem.
\end{proof}
\begin{remark} In the case $s=0,$ this proof gives a slightly better result:
  the Poisson kernel actually maps $L^2(bX)$ into $H_{\hn}(X)$ and therefore
  the argument shows that there is a constant $C_0$ such that if $u\in L^2,$
  $\eth^{\eo}_{\pm} u\in L^2$ and $\cR^{\prime\eo}_{\pm} [u]_{bX}=0,$ then
\begin{equation}
\|u\|_{\hn}\leq C_0[\|f\|_{L^2}+\|u\|_{L^2}]
\end{equation}
This is just  the standard $\ha$-estimate for the operators
$\overline{(\eth^{\eo}_{\pm},\cR^{\prime\eo}_{\pm})}$ 
\end{remark}

It is also possible to prove localized versions of these results. The higher norm
estimates have the same consequences as for the $\dbar$-Neumann
problem. Indeed, under certain hypotheses these estimates imply higher norm
estimates for the second order operators considered in~\cite{Epstein4}. We
prove these in the next section after showing the the closures of the formal
adjoints of $(\eth^{\eo}_{\pm},\cR^{\prime\eo}_{\pm})$ are the $L^2$-adjoints
of these operators.

\section{Adjoints of the Spin${}_{\bbC}$ Dirac operators}

In the previous section we proved that the operators
$\overline{(\eth^{\eo}_{\pm},\cR^{\prime\eo}_{\pm})}$ are Fredholm operators, as well as
estimates that they satisfy. In this section we show that the $L^2$-adjoints of
these operators are the closures of the formal adjoints.
\begin{theorem}\label{thm3} If $X$ is strictly pseudoconvex (pseudoconcave),
  $E\to X$ a compatible complex vector bundle, then we have
  the following relations:
\begin{equation}
(\eth^{\eo}_{E\pm},\cR^{\prime\eo}_{E\pm})^*=
\overline{(\eth^{\ooee}_{E\pm},\cR^{\prime\ooee}_{E\pm})}.
\label{7.21.2}
\end{equation}
We take $+$ if $X$ is pseudoconvex and $-$ if $X$ is pseudoconcave. 
\end{theorem}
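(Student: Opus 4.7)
The plan is to prove the theorem by verifying two inclusions between closed operators: the easy direction $\overline{(\eth^{\ooee}_{E\pm},\cR^{\prime\ooee}_{E\pm})}\subseteq (\eth^{\eo}_{E\pm},\cR^{\prime\eo}_{E\pm})^*$ via Green's formula, and the hard direction by constructing the adjoint element explicitly using the parametrix machinery from Sections 5--7. For concreteness, I focus on $(\eth^{\even}_{+},\cR^{\prime\even}_{+})$; the other cases are notationally identical. Throughout let $T=\overline{(\eth^{\even}_+,\cR^{\prime\even}_+)}$ and $T'=\overline{(\eth^{\odd}_+,\cR^{\prime\odd}_+)}$.

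\textbf{Easy direction.} For smooth $\sigma\in\Dom(\eth^{\even}_+,\cR^{\prime\even}_+)$ and smooth $\tau\in\Dom(\eth^{\odd}_+,\cR^{\prime\odd}_+)$, Green's formula for Dirac operators gives
\begin{equation}
\langle\eth^{\even}_+\sigma,\tau\rangle_X-\langle\sigma,\eth^{\odd}_+\tau\rangle_X
=\int_{bX}\langle\sigma_1(\eth^{\even}_+,-idt)\sigma,\tau\rangle_{g}\,dS.
\end{equation}
Writing $\sigma=\sigma^t+\tfrac{d\bz_1}{\sqrt 2}\wedge\sigma^n$ and similarly for $\tau$, and using Lemma~\ref{lemm4} together with the symbol action~\eqref{7.13.3}, the boundary integral becomes a sum of fiberwise pairings at each form degree. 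The boundary conditions force $\sigma^n=0$ and $\cS'(\sigma^t)_0=0$, while $\tau$ satisfies $(1-\cS')(\tau^n)_0=0$ and $(\tau^n)_{\geq 2}=0$. The $\sigma^n$--$\tau^n$ pairing vanishes by $\sigma^n=0$; the degree-$0$ part of the $\sigma^t$--$\tau^t$ pairing vanishes by self-adjointness of $\cS'$ (via $\langle\cS'u,v\rangle=\langle u,\cS'v\rangle$); the positive degree contributions vanish degree-by-degree because $\tau^n$ is entirely concentrated in its degree-$0$ component after imposing the constraint. Extending by density in the graph norm yields $T'\subseteq T^*$.

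\textbf{Hard direction.} Let $\tau\in\Dom(T^*)$ with $T^*\tau=h\in L^2$. Testing against $\sigma\in\CIc(X\setminus bX)$ gives $\eth^{\odd}_+\tau=h$ distributionally, so Proposition~\ref{prop_bbw} places the trace $[\tau]_{bX}$ in $H^{-\ha}(bX)$. Next, for a general smooth $\sigma$ with $\cR^{\prime\even}_+[\sigma]_{bX}=0$, Green's formula gives $\int_{bX}\langle\sigma_1(\eth^{\even}_+,-idt)\sigma,\tau\rangle\,dS=0$; since the constraint on $\sigma$ spans a dense subspace of $\ker\cR^{\prime\even}_+$ in any reasonable Sobolev norm, the dual computation of the first step forces $\cR^{\prime\odd}_+[\tau]_{bX}=0$ in the distributional sense on $bX$. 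Now decompose $\tau=Q^{\odot}_+h+\tau_0$, where $Q^{\odd}_+h$ lies in $H^1_{\loc}$ and $\eth^{\odd}_+\tau_0=0$. Since $\tau_0$ lies in the $L^2$-kernel of $\eth^{\odd}_+$, its trace satisfies $\cP^{\odd}_+[\tau_0]_{bX}=[\tau_0]_{bX}$, and hence
\begin{equation}
\cT^{\prime\odd}_+[\tau_0]_{bX}=-\cR^{\prime\odd}_+[Q^{\odd}_+h]_{bX}\in H^{\ha}(bX).
\end{equation}
Applying the parametrix $\cU^{\prime\odd}_+$ from~\eqref{7.22.1} and invoking Proposition~\ref{prop15}, we gain a half derivative and conclude $[\tau_0]_{bX}\in L^2(bX)$ modulo a smoothing error, so $\tau_0=\cK^{\odd}_+[\tau_0]_{bX}\in H_{\hn}(X)$. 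In particular $\tau$ already has enough regularity for its trace to satisfy $\cR^{\prime\odd}_+[\tau]_{bX}=0$ as an $H^{-\ha}(bX)$ identity.

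\textbf{Approximation.} To place $\tau$ in $\Dom(T')$ I mollify $h$ to a smooth sequence $h^n\to h$ in $L^2$, then form
\begin{equation}
\tau^n=Q^{\odd}_+h^n-\cK^{\odd}_+\cU^{\prime\odd}_+\cR^{\prime\odd}_+[Q^{\odd}_+h^n]_{bX}+\tau_0^n,
\end{equation}
where $\tau_0^n$ is chosen in the finite-dimensional null space of $T'$ to match the appropriate finite-rank correction from the parametrix identity. By construction $\tau^n$ is smooth, lies in $\Dom(\eth^{\odd}_+,\cR^{\prime\odd}_+)$ (the argument from the proof of Theorem~\ref{thm2}, especially Lemma~\ref{lemm77} applied to the odd side), and $\eth^{\odd}_+\tau^n=h^n\to h$ in $L^2$. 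Continuity of $Q^{\odd}_+$, $\cK^{\odd}_+$, $\cU^{\prime\odd}_+$ on the appropriate Sobolev spaces then yields $\tau^n\to\tau$ in the graph norm, placing $\tau\in\Dom(T')$. The main obstacle is the last step: showing that the finite-rank corrections can genuinely be absorbed into $\tau_0^n$ so that the boundary condition is satisfied exactly rather than modulo smoothing; this is handled by the same Fredholm-theoretic arguments used in the proof of Theorem~\ref{thm2}, since $T'$ is Fredholm of finite index and its range has finite codimension in $L^2$.
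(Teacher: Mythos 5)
Your easy direction, $\overline{(\eth^{\ooee},\cR^{\prime\ooee})}\subseteq(\eth^{\eo},\cR^{\prime\eo})^*$, is fine and matches the paper. The hard direction, however, diverges from the paper and contains a genuine gap.

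The paper argues indirectly: if the containment is proper, then for any real $\mu\neq 0$ there is a nonzero $v$ in the graph of $(\eth^{\eo},\cR^{\prime\eo})^*$ orthogonal (in graph norm) to the graph of $\overline{(\eth^{\ooee},\cR^{\prime\ooee})}$. Integrating by parts against $w$ satisfying the ``double'' boundary condition reduces this to $\langle v,(\eth^{\eo}\eth^{\ooee}+\mu^2)w\rangle=0$, and Proposition~\ref{prop14} (established via the closed quadratic form $Q(w)=\|\eth^{\ooee}w\|^2$, whose associated self-adjoint operator $L+\mu^2$ is invertible, plus regularity from a $\mu$-shifted first-order system $\cD^\mu_\pm$) shows this equation is solvable for every smooth right-hand side, forcing $v=0$. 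The crucial point is that the $\mu^2$-shift makes the second-order boundary value problem exactly invertible, not merely Fredholm, so there is no finite-rank obstruction to solvability.

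Your approach tries to place an arbitrary $\tau\in\Dom(T^*)$ into $\Dom(T')$ by direct approximation, and the gap is precisely where you flag it, but it is more than a technicality. The correction term $\tau_0^n$ drawn from the null space of $T'$ satisfies $\cR^{\prime\odd}_+[\tau_0^n]_{bX}=0$ by definition, so adding it to $\tau^n$ \emph{cannot} repair a nonzero value of $\cR^{\prime\odd}_+[\tau^n]_{bX}$ coming from the finite-rank error $K_2$ in the parametrix identity~\eqref{7.22.1}. The obstruction lives in the cokernel direction, not the kernel, and the construction from Theorem~\ref{thm2} only produces an exact solution when $K_2\cR^{\prime\odd}_+[Q^{\odd}h^n]_{bX}=0$, a finite-codimension constraint on $h^n$. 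Even if you restrict your mollified sequence $h^n$ to that subspace, the convergence $\tau^n\to\tau$ in $L^2$ does not follow, since $T'$ has a nontrivial kernel and no a priori uniqueness is available. Finally, the argument is implicitly trying to prove the domain characterization in the Corollary following Theorem~\ref{thm3}, but that characterization is a \emph{consequence} of Theorem~\ref{thm3} in the paper, so one cannot lean on it here without circularity. To repair the proof you would need something that yields exact solvability of the relevant boundary value problem, which is exactly the role of the $\mu^2$-perturbation and Proposition~\ref{prop14} in the paper.
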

\begin{proof} The argument follows a standard outline. It is clear that
\begin{equation}
\overline{(\eth^{\ooee}_{\pm},\cR^{\prime\ooee}_{\pm})}\subset
(\eth^{\eo}_{\pm},\cR^{\prime\eo}_{\pm})^*
\label{7.21.3}
\end{equation}
Suppose that the containment is proper. This would imply that, for any nonzero,
real $\mu$ there 
exists a nonzero section
$v\in\Dom_{L^2}((\eth^{\eo}_{\pm},\cR^{\prime\eo}_{\pm})^*),$ such that, for all
$w\in\Dom((\eth^{\ooee}_{\pm},\cR^{\prime\ooee}_{\pm})),$
\begin{equation}
\langle [\eth^{\eo}_{\pm}]^* v,\eth^{\ooee}_{\pm} w\rangle+\mu^2\langle v,w\rangle=0.
\label{7.21.4}
\end{equation}
Suppose that $\cR^{\prime\eo}_{\pm}\eth^{\ooee}_{\pm} w\restrictedto_{bX}=0.$ Since
$v$ belongs to $\Dom_{L^2}((\eth^{\eo}_{\pm},\cR^{\prime\eo}_{\pm})^*)),$ we can
integrate by parts to obtain that
\begin{equation}
\langle v, (\eth^{\eo}_{\pm}\eth^{\ooee}_{\pm}+\mu^2)w\rangle=0.
\label{7.21.5}
\end{equation}

This reduces the proof of the theorem to the following proposition.
\begin{proposition}\label{prop14}
For any nonzero real number $\mu,$ and 
$$f\in\CI(\bX;\Spn^{\ooee}\otimes E),$$
there is a section $w\in\CI(\bX;\Spn^{\ooee}\otimes E),$ which satisfies
\begin{equation}
\begin{split}
(\eth^{\eo}_{\pm}&\eth^{\ooee}_{\pm}+\mu^2)w=f\\
\cR^{\prime\ooee}_{\pm} w\restrictedto_{bX}=0&\quad\text{ and }\quad
\cR^{\prime\eo}_{\pm}\eth^{\ooee}_{\pm} w\restrictedto_{bX}=0.
\end{split}
\label{7.21.6}
\end{equation}
\end{proposition}

Before proving the proposition, we show how it implies the theorem. Let $w,f$
be as in~\eqref{7.21.6}. The boundary conditions satisfied by
$\eth^{\ooee}_{\pm} w$ and ~\eqref{7.21.5} imply that we have
\begin{equation}
\langle v,f\rangle=0.
\end{equation}
As $f\in\CI(\bX;\Spn^{\ooee}\otimes E)$ is arbitrary, this shows that $v=0$ as well and
thereby completes the proof of the theorem. 
\end{proof}

The proposition is a consequence of Theorems~\ref{thm1} and~\ref{thm33}.
\begin{proof}[Proof of Proposition~\ref{prop14}]
The first step is to show that~\eqref{7.21.6} has a weak solution for any
non-zero real number $\mu,$ after which, we use a small extension of
Theorem~\ref{thm33} to show that this solution is actually in
$\CI(\bX;\Spn^{\ooee}\otimes E).$ 
\begin{lemma} Let $Q(w)=\langle\eth^{\ooee}_{\pm}w,\eth^{\ooee}_{\pm}w\rangle,$
  denote the non-negative, symmetric quadratic form with domain:
  \begin{equation}
    \label{eq:100.1}
    \Dom(Q)=\{w\in L^2(X):\: \eth^{\ooee}_{\pm}w\in L^2(X)\text{ and
    }\cR^{\prime\ooee}_{\pm} w\restrictedto_{bX}=0\}. 
  \end{equation}
  The form $Q$ is closed and densely defined.  Let $L$ denote the self adjoint
  operator defined by $Q.$ If $w\in\Dom(L),$ then
\begin{equation}
  \label{eq:100.2}
  \eth^{\eo}_{\pm}\eth^{\ooee}_{\pm}w\in L^2 \text{ and }
  \cR^{\prime\eo}_{\pm}\eth^{\ooee}_{\pm}w\restrictedto_{bX}=0. 
\end{equation}
\end{lemma}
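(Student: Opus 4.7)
The plan has three main steps: first, verify $Q$ is closed and densely defined, which by the representation theorem produces $L$; second, use interior test functions to show $\eth^{\eo}_{\pm}\eth^{\ooee}_{\pm} w = Lw \in L^2$; third, use a generalized Green's identity with boundary-respecting test functions to extract the boundary condition on $\eth^{\ooee}_{\pm} w|_{bX}$.

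For the first step, the graph norm $\|w\|_{L^2}^2+Q(w)=\|w\|_{L^2}^2+\|\eth^{\ooee}_{\pm}w\|_{L^2}^2$ is precisely the graph norm of $(\eth^{\ooee}_{\pm},\cR^{\prime\ooee}_{\pm})$, whose closure is Fredholm by Theorem~\ref{thm2}; hence $\Dom(Q)$ is complete and $Q$ is closed. Density of $\Dom(Q)$ in $L^2(X)$ is immediate from $\CIc(\ins X)\subset\Dom(Q)$. The standard representation theorem for closed, densely defined, non-negative symmetric forms then yields a unique self-adjoint $L\geq 0$ with $\Dom(L^{1/2})=\Dom(Q)$, characterized by the identity $\langle\eth^{\ooee}_{\pm}w,\eth^{\ooee}_{\pm}v\rangle=\langle Lw,v\rangle$ for all $v\in\Dom(Q)$, whenever $w\in\Dom(L)$.

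For the second step, testing this identity against $v\in\CIc(\ins X)\subset\Dom(Q)$ and integrating by parts in the interior yields $\eth^{\eo}_{\pm}\eth^{\ooee}_{\pm}w=Lw$ in the sense of distributions; hence $\eth^{\eo}_{\pm}\eth^{\ooee}_{\pm}w\in L^2(X)$. In particular, setting $u:=\eth^{\ooee}_{\pm}w$, both $u$ and $\eth^{\eo}_{\pm}u$ lie in $L^2(X)$, so Proposition~\ref{prop_bbw} provides a well-defined trace $u|_{bX}\in H^{-\ha}(bX;\Spn^{\eo}\otimes E|_{bX})$.

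For the third step, I invoke the generalized Green's identity for the Dirac pair $(\eth^{\eo}_{\pm},\eth^{\ooee}_{\pm})$, which extends by continuity from the smooth case to $u$ in the graph space and $v\in\CI(\bar X;\Spn^{\ooee}\otimes E)$, with the boundary pairing interpreted as the $H^{-\ha}$--$H^{\ha}$ duality:
\begin{equation*}
\langle\eth^{\ooee}_{\pm}w,\eth^{\ooee}_{\pm}v\rangle_X-\langle\eth^{\eo}_{\pm}u,v\rangle_X = c\,\bigl\langle\sigma_1(\eth^{\eo}_{\pm},\mp i\,dt)\,u|_{bX},\,v|_{bX}\bigr\rangle_{bX}
\end{equation*}
for a fixed nonzero constant $c$. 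When $\cR^{\prime\ooee}_{\pm}v|_{bX}=0$, one has $v\in\Dom(Q)$, so the left-hand side vanishes by the defining identity of $L$. Thus the boundary pairing vanishes for every $v|_{bX}$ in the range of $\Id-\cR^{\prime\ooee}_{\pm}$ on smooth sections, which is $H^{\ha}$-dense in $\ker\cR^{\prime\ooee}_{\pm}$; equivalently, $\sigma_1(\eth^{\eo}_{\pm},\mp i\,dt)\,u|_{bX}$ lies in $\range(\cR^{\prime\ooee}_{\pm})$ (using self-adjointness of the projection).

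The main obstacle, and the last thing to check, is the algebraic intertwining $\sigma_1(\eth^{\eo}_{\pm},\mp i\,dt)\bigl(\ker\cR^{\prime\eo}_{\pm}\bigr)=\range(\cR^{\prime\ooee}_{\pm})$, which converts the annihilator condition into the desired $\cR^{\prime\eo}_{\pm}u|_{bX}=0$. This is a block-wise verification using Lemmas~\ref{lemm4} and~\ref{lemm5} together with the isomorphism formulas $\sigma_1(\eth^{\eo}_{\pm},\mp i\,dt)\sigma^t=\pm\sigma^t/\sqrt 2$ and $\sigma_1(\eth^{\eo}_{\pm},\mp i\,dt)\sigma^n=\mp\sigma^n/\sqrt 2$: the complementary self-adjoint projections $(\cS',1-\cS')$ (or $(\bcS',1-\bcS')$) on the contact component and $(\Id,\bzero)$ on the normal component, combined with the sign flip on the normal part, implement the intertwining. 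The case analysis splits by $\pm$, chirality, and on the $-$ side by the parity of $n$, but each case is routine; once this is in place, the lemma follows.
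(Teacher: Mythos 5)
Your proof follows essentially the same route as the paper's: construct $L$ from the closed form via the representation theorem, identify $\eth^{\eo}_{\pm}\eth^{\ooee}_{\pm}w=Lw\in L^2$ by testing against interior test functions, and then extract the boundary condition from the Green's identity, using the fact that the symbol isomorphism $\sigma_1(\eth^{\eo}_{\pm},\mp i\,dt)$ intertwines $\cR^{\prime\eo}_{\pm}$ with $\Id-\cR^{\prime\ooee}_{\pm}$. The one place the argument as written does not close is the closedness of $Q$. You cite Theorem~\ref{thm2}, which shows that $\overline{(\eth^{\ooee}_{\pm},\cR^{\prime\ooee}_{\pm})}$ is Fredholm; but that operator's domain $H_{\cD}$ is the graph closure of smooth sections satisfying the boundary condition, which is \emph{a priori} contained in, and not obviously equal to, the set $\Dom(Q)$ of the lemma, which is cut out by the \emph{distributional} condition $\cR^{\prime\ooee}_{\pm}w\restrictedto_{bX}=0$. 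Fredholmness of the closure does not by itself show that this possibly larger set is complete in the graph norm. What is actually needed is that the boundary condition persists under graph-norm limits, i.e.\ continuity of the trace map into $H^{-\ha}(bX)$ together with boundedness of $\cR^{\prime\ooee}_{\pm}$ on $H^{-\ha}$; this is precisely Proposition~\ref{prop_bbw}, which is the ingredient the paper invokes here. Appealing instead to the equality $\Dom(Q)=H_{\cD}$ would be circular, since that identification is only established via Theorem~\ref{thm3}, whose proof rests on the present lemma.

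Once the closedness step is repaired in this way, the rest of your argument — the Green's identity, the vanishing of the boundary pairing for $v\in\Dom(Q)$, and the block-wise verification of the intertwining using Lemmas~\ref{lemm4}, \ref{lemm5} and equation~\eqref{7.13.3} — matches the paper's proof in substance. (The paper applies the symbol isomorphism to the test section $v$ rather than to $u=\eth^{\ooee}_{\pm}w$ as you do, but the two formulations are equivalent by taking adjoints.)
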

\begin{remark} That a densely defined, closed, symmetric, non-negative
  quadratic form defines a self adjoint operator is the content of Theorem
  VI.2.6 in~\cite{Kato}. For the remainder of this section we let $\rho$ denote
  a defining function for $bX,$ with $d\rho$ of unit length along the sets
  $\{\rho=\epsilon\},$ for $\epsilon$ sufficiently small.
\end{remark}
\begin{proof}[Proof of Lemma] It is clear that $Q$ is densely defined. That the
  form is closed is an immediate consequence of Proposition~\ref{prop_bbw}. By
  definition, the domain of $L$ consists of sections $w\in\Dom(Q),$ such that
  there exists a $g\in L^2,$ for which
  \begin{equation}
    \label{eq:100.3}
    Q(w,v)=\langle g,v\rangle
  \end{equation}
for all $v\in \Dom(Q).$ Since all smooth sections with compact support lie in
$\Dom(Q),$ it follows from~\eqref{eq:100.3} that 
\begin{equation}
  \label{eq:100.4}
  \eth^{\eo}_{\pm}\eth^{\ooee}_{\pm}w=g\in L^2,
\end{equation}
where the operator, $\eth^{\eo}_{\pm}\eth^{\ooee}_{\pm},$ is applied in the
distributional sense. This in turn implies that $w\in H^2_{\loc}(X),$ and that
$\eth^{\ooee}_{\pm}w$ has restrictions to the sets $\{\rho=\epsilon\},$ which
depend continuously on $\epsilon$ in the $H^{-\frac 12}(bX)$-topology.

Now let $v$ be a section, smooth in the closure of $X,$ though not necessarily
in $\Dom(Q).$ The regularity properties of $w$ imply that
\begin{equation}
  \label{eq:100.5}
  Q(w,v)=\langle\eth^{\eo}_{\pm}\eth^{\ooee}_{\pm}w,v\rangle+
\langle\eth^{\ooee}_{\pm}w,\sigma(\eth^{\ooee}_{\pm},-id\rho)v\rangle_{bX}.
\end{equation}
If $v\in\Dom(Q),$ then~\eqref{eq:100.4} shows that the boundary term
in~\eqref{eq:100.5} must vanish. If $h$ is any smooth section defined on $bX,$
then by smoothly extending $(\Id-\cR^{\prime\ooee}_{\pm})h$
to $\bX$ we obtain a smooth section $v\in\Dom(Q),$ with
\begin{equation}
  v\restrictedto_{bX}=(\Id-\cR^{\prime\ooee}_{\pm})h.
\end{equation}
Hence, if $w\in \Dom(L),$ then, for any smooth section $h,$ we have
\begin{equation}
  \label{eq:100.6}
\begin{split}
 \langle \cR^{\prime\eo}_{\pm}\eth^{\ooee}_{\pm}w,
h)\rangle_{bX}&= \langle\sigma(\eth^{\eo}_{\pm},-id\rho)\eth^{\ooee}_{\pm}w,
(\Id-\cR^{\prime\ooee}_{\pm})h)\rangle_{bX}\\
&=0,
\end{split}
\end{equation}
verifying the final assertion of the lemma.
\end{proof}

The operator $L$ is non-negative and self adjoint. Hence for any real $\mu\neq
0,$ and $f\in\CI(\bX;\Spn^{\ooee}\otimes E),$ there is a unique $w\in\Dom(L)$
satisfying~\eqref{7.21.6} in the sense of distributions. To complete the proof
of the proposition we need to show that this solution is smooth.

We rewrite this in terms of the system of first order equations:
\begin{equation}
\begin{split}
&\cD^\mu_{\pm}\left(\begin{matrix} u \\ v\end{matrix}\right)\overset{d}{=}
\left(\begin{matrix} \eth^{\ooee}_{\pm} & -\mu\\
\mu &\eth^{\eo}_{\pm}\end{matrix}\right)\left(\begin{matrix} u \\
  v\end{matrix}\right)=
\left(\begin{matrix} a \\ b\end{matrix}\right),\\
&\cR_{\pm}\left(\begin{matrix} u \\ v\end{matrix}\right)\overset{d}{=}
\left(\begin{matrix} \cR^{\prime\ooee}_{\pm} & 0\\
0 &\cR^{\prime\eo}_{\pm}\end{matrix}\right)\left(\begin{matrix} u \\
  v\end{matrix}\right)_{bX}=0.
\end{split}
\label{7.21.7}
\end{equation}
Clearly the solution constructed above satisfies
\begin{equation}
  \label{eq:100.7}
  \cD^\mu_{\pm}\left(\begin{matrix}
      w\\\frac{1}{\mu}\eth^{\ooee}_{\pm}w\end{matrix}\right)=
\left(\begin{matrix} 0\\\frac{f}{\mu}\end{matrix}\right)\text{ and }
\cR_{\pm}\left(\begin{matrix}
      w\\\frac{1}{\mu}\eth^{\ooee}_{\pm}w\end{matrix}\right)
\restrictedto_{bX}=0,
\end{equation}
in the sense of distributions.
To complete the proof of the proposition it suffices to establish a regularity
result for $( \cD^\mu_{\pm},\cR_{\pm})$ analogous to
Theorem~\ref{thm33}. Indeed essentially the same argument applies to this case.

Let $\cP^{\mu}_{\pm}$ denote the Calderon projector
for the operator $\cD^{\mu}_{\pm},$ and set
\begin{equation}
\cT^{\mu}_{\pm}=\cR_{\pm}\cP^{\mu}_{\pm}+(\Id-\cR_{\pm})(\Id-\cP^{\mu}_{\pm}).
\end{equation}
Theorem~\ref{thm1}  implies that $\cT^{0}_{\pm}$ is a graded elliptic element of
the extended Heisenberg calculus. Let $\cU^{0}_{\pm}$ denote a
parametrix for $\cT^{0}_{\pm}.$ We now show that
\begin{equation}
\cT^{\mu}_{\pm}=\cT^{0}_{\pm}+\sO^{eH}_{-1,-2}
\label{7.21.9}
\end{equation}
Here $\sO^{eH}_{-1,-2}$ is an extended Heisenberg operator,
having Heisenberg order  $-2$ on the appropriate parabolic face and classical
order $-1.$ As the extended Heisenberg order of 
$\cU^0_{\pm}$ is $(0,1)$ we see that this operator is also a parametrix for
$\cT^{\mu}_{\pm}.$  We now verify~\eqref{7.21.9}.

The operator $\cD^{\mu}_{\pm}[\cD^{\mu}_{\pm}]^*$ is given by
\begin{equation}
\cD^{\mu}_{\pm}[\cD^{\mu}_{\pm}]^*=\left(\begin{matrix}
  \eth^{\ooee}_{\pm}\eth^{\eo}_{\pm}+\mu^2 & 0\\
0 & \eth^{\eo}_{\pm}\eth^{\ooee}_{\pm}+\mu^2\end{matrix}\right).
\end{equation}
The fundamental solution $\cQ^{\mu(2)}_{\pm}$ has the form
\begin{equation}
\cQ^{\mu(2)}_{\pm}=\left(\begin{matrix}
  Q^{\eo(2)\mu}_{\pm} & 0\\
0 & Q^{\ooee(2)\mu}_{\pm }\end{matrix}\right),
\end{equation}
where $Q^{\eo(2)\mu}_{\pm}= (\eth^{\ooee}_{\pm}\eth^{\eo}_{\pm}+\mu^2)^{-1}.$
A fundamental solution for $\cD^{\mu}_{\pm}$ is then given by
\begin{equation}
\cQ^{\mu}_{\pm}=[\cD^{\mu}_{\pm}]^*\cQ^{\mu(2)}_{\pm}
=\left(\begin{matrix}\eth^{\eo}_{\pm}Q^{\eo(2)\mu}_{\pm} &\mu
  Q^{\ooee(2)\mu}_{\pm}\\-\mu
  Q^{\eo(2)\mu}_{\pm}&\eth^{\ooee}_{\pm}Q^{\ooee(2)\mu}_{\pm}
\end{matrix}\right).
\label{7.21.11}
\end{equation}

The claim in~\eqref{7.21.9} follows from the observation that
\begin{equation}
Q^{\eo(2)\mu}_{\pm}-Q^{\eo(2)0}_{\pm}\in\sO_{-4},
\label{7.21.10}
\end{equation}
which is a consequence of the resolvent identity
\begin{equation}
(\eth^{\ooee}_{\pm}\eth^{\eo}_{\pm}+\mu^2)^{-1}-
(\eth^{\ooee}_{\pm}\eth^{\eo})^{-1}=
-\mu^2(\eth^{\ooee}_{\pm}\eth^{\eo}_{\pm}+\mu^2)^{-1}(\eth^{\ooee}_{\pm}\eth^{\eo}_{\pm})^{-1},
\end{equation}
and the fact that $\eth^{\ooee}_{\pm}\eth^{\eo}_{\pm}$ is elliptic of order
$2.$ Using~\eqref{7.21.10} in~\eqref{7.21.11} shows that
\begin{equation}
\cQ^{\mu}_{\pm}=\cQ^{0}_{\pm}+\left(\begin{matrix}
\sO_{-3} &\sO_{-2}\\ \sO_{-2} &\sO_{-3}\end{matrix}\right).
\label{7.21.12}
\end{equation}
We can now apply Proposition~\ref{prop6} to conclude that the $\sO_{-3}$ terms
along the diagonal in $\cQ^{\mu}_{\pm}$ can only change the symbol of
$\cP^{0}_{\pm}$ by terms with Heisenberg order $-4.$ The residue computations in
Section~\ref{s.3} show that the $\sO_{-2}$ off diagonal terms can only contribute
terms to $\cP^{\mu}_{\pm}$ at Heisenberg order $-2,$ hence
\begin{equation}
\cP^{\mu}_{\pm}=\cP^{0}_{\pm}+\left(\begin{matrix}\sO^{eH}_{-2,-4}
&\sO^{eH}_{-1,-2}\\
\sO^{eH}_{-1,-2}&\sO^{eH}_{-2,-4}\end{matrix}\right).
\label{7.21.13}
\end{equation}
The truth of~\eqref{7.21.9} is an immediate consequence of~\eqref{7.21.13} and
the fact that $\cU^0_{\pm}$ has extended Heisenberg orders $(0,1).$

As noted above, this shows that the leading order part of the parametrix for
$\cT^{\mu}_{\pm}$ has the form
\begin{equation}
\left(\begin{matrix} \cU^{\ooee}_{\pm} & \bzero \\
\bzero & \cU^{\eo}_{\pm}\end{matrix}\right).
\end{equation}
We let $\cU^{\mu}_{\pm}$ denote a parametrix chosen so that
\begin{equation}
\cU^{\mu}_{\pm}\cT^{\mu}_{\pm}=\Id+K_1^{\mu}\quad
\cT^{\mu}_{\pm}\cU^{\mu}_{\pm}=\Id+K_2^{\mu}
\end{equation}
with $K^{\mu}_1, K_2^{\mu}$ smoothing operators of finite rank. Arguing as in
Theorem~\ref{thm33}, one easily proves the desired regularity:
\begin{lemma}\label{lem14} Let $\mu\in\bbC$ and $s\geq 0,$ if $(f,g)$ belongs
  to $L^2,$ and satisfies
  \begin{equation}
    \label{eq:100.10}
     \cD^\mu_{\pm}\left(\begin{matrix}
      f\\ g\end{matrix}\right)=
\left(\begin{matrix} a\\b\end{matrix}\right)\in H^s\text{ and }
\cR_{\pm}\left(\begin{matrix}
      f\\g\end{matrix}\right)
\restrictedto_{bX}=0,
  \end{equation}
  in the sense of distributions, then $f,g\in H^{s+\frac 12}.$ There is a
  constant $C_{s,\mu},$ independent of $(f,g)$ so that
\begin{equation}
  \label{eq:100.11}
  \|(f,g)\|_{H^{s+\frac 12}}\leq C_{s,\mu}\left[\| \cD^\mu_{\pm}\left(\begin{matrix}
      f\\ g\end{matrix}\right)\|_{H^s}+\|\left(\begin{matrix}
      f\\ g\end{matrix}\right)\|_{L^2}\right].
\end{equation}
\end{lemma}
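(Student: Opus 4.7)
The plan is to mirror the argument for Theorem~\ref{thm33}, using the parametrix $\cU^{\mu}_{\pm}$ for $\cT^{\mu}_{\pm}$ constructed in the preceding paragraphs. Two facts have already been arranged: (i)~$\cU^{\mu}_{\pm}$ is an extended Heisenberg operator with classical order $0$ and Heisenberg order at most $1$, so by Proposition~\ref{prop15} it maps $H^{s}(bX)$ to $H^{s-\frac 12}(bX)$ for every real $s$; and (ii)~the fundamental solution $\cQ^{\mu}_{\pm}$ on an invertible double of $X$ is a classical pseudodifferential operator of order $-1$.

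First I would peel off the inhomogeneous part. Set $(f_1,g_1)=r\cQ^{\mu}_{\pm}(a,b)$, where $r$ is restriction to $X$; interior elliptic regularity then gives $(f_1,g_1)\in H^{s+1}(X)$ with $\|(f_1,g_1)\|_{H^{s+1}}\lesssim\|(a,b)\|_{H^s}$. The remainder $(f_0,g_0)=(f,g)-(f_1,g_1)$ satisfies $\cD^{\mu}_{\pm}(f_0,g_0)=0$ distributionally, lies in $L^2$, and by Proposition~\ref{prop_bbw} has boundary values in $H^{-\frac 12}(bX)$. The Calderon projector identity then gives
\begin{equation*}
\cP^{\mu}_{\pm}[(f_0,g_0)]_{bX}=[(f_0,g_0)]_{bX},
\end{equation*}
and combining this with $\cR_{\pm}[(f,g)]_{bX}=0$ produces
\begin{equation*}
\cT^{\mu}_{\pm}[(f_0,g_0)]_{bX}=-\cR_{\pm}[(f_1,g_1)]_{bX}.
\end{equation*}

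Next I would apply $\cU^{\mu}_{\pm}$ on the left, obtaining
\begin{equation*}
(\Id+K_1^{\mu})[(f_0,g_0)]_{bX}=-\cU^{\mu}_{\pm}\cR_{\pm}[(f_1,g_1)]_{bX}.
\end{equation*}
Since $K_1^{\mu}$ is a finite-rank smoothing operator, and since the trace theorem together with $(f_1,g_1)\in H^{s+1}(X)$ gives $[(f_1,g_1)]_{bX}\in H^{s+\frac 12}(bX)$, the mapping property of $\cU^{\mu}_{\pm}$ yields
\begin{equation*}
\|[(f_0,g_0)]_{bX}\|_{H^{s}(bX)}\le C_{s,\mu}\bigl[\|(a,b)\|_{H^s}+\|(f,g)\|_{L^2}\bigr],
\end{equation*}
in direct analogy with~\eqref{7.22.101}. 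Because the Poisson kernel for $\cD^{\mu}_{\pm}$ carries $H^s(bX)$ boundedly into $H^{s+\frac 12}(X)$, reconstructing $(f_0,g_0)$ from its boundary values puts it in $H^{s+\frac 12}(X)$, and adding back $(f_1,g_1)\in H^{s+1}(X)$ delivers both the asserted regularity and the estimate~\eqref{eq:100.11}. A simple bootstrap from $s=0$ upward avoids any circularity.

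The main obstacle is not a new analytic difficulty but the verification that $\cU^{\mu}_{\pm}$ inherits the mapping properties of $\cU^{0}_{\pm}$. This reduces to the identity $\cT^{\mu}_{\pm}=\cT^{0}_{\pm}+\sO^{eH}_{-1,-2}$ already established in~\eqref{7.21.9}, combined with the block structure from Theorem~\ref{thm1} in which the lower-right entry of the Heisenberg inverse vanishes to leading order: these jointly guarantee that $\cU^{\mu}_{\pm}$ again lies in $\Psi^{0,1,1}_{eH}$, so Proposition~\ref{prop15} applies verbatim. Once this is in place, the remainder of the proof is a direct transcription of Theorem~\ref{thm33}.
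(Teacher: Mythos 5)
Your proposal is correct and reproduces essentially the same argument as the paper: split off the inhomogeneous part with $\cQ^{\mu}_{\pm}$, use the Calderon projector identity to reduce to $\cT^{\mu}_{\pm}[(f_0,g_0)]_{bX}=-\cR_{\pm}[(f_1,g_1)]_{bX}$, apply the parametrix $\cU^{\mu}_{\pm}$, absorb the smoothing correction $K_1^{\mu}$, and reconstruct via the Poisson operator. Your final paragraph---verifying that $\cU^{\mu}_{\pm}$ has classical order $0$ and Heisenberg order at most $1$ via~\eqref{7.21.9} so that Proposition~\ref{prop15} applies---is exactly the point the paper settles just before stating the lemma; the only superfluous step is the bootstrap from $s=0$, which is not needed since $K_1^{\mu}$ already maps $H^{-\ha}(bX)$ into every $H^N(bX)$, giving the required $H^s$ control in one pass.
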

\begin{proof}
 Let $U_1=\cQ^{\mu}_{\pm}(a,b),$ so that $U_1\in H^{s+1},$ and
\begin{equation}
\cD^{\mu}_{\pm}(U-U_1)=0.
\end{equation}
On the one hand $\cR_{\pm}([U-U_1]_{bX})=-\cR_{\pm}([U_1]_{bX})\in H^{s+\frac 12}(bX).$ On the
other hand $[U-U_1]_{bX}\in\Im\cP^{\mu}_{\pm}$ and therefore
$$-\cR_{\pm}([U_1]_{bX})=\cR_{\pm}([U-U_1]_{bX})=\cT^{\mu}_{\pm}([U-U_1]_{bX}).$$
We apply $\cU^{\mu}_{\pm}$ to this relation to obtain
\begin{equation}
-\cU^{\mu}_{\pm}\cR_{\pm}([U_1]_{bX})=(\Id+K_1^{\mu})([U-U_1]_{bX})
\end{equation}
Rewriting this result gives
\begin{equation}
[U]_{bX}=-\cU^{\mu}_{\pm}\cR_{\pm}([U_1]_{bX})+(\Id+K_1^{\mu})([U_1]_{bX})
-K_1^{\mu}[U]_{bX}.
\label{7.22.12}
\end{equation}
All terms on the right hand side of~\eqref{7.22.12}, but the last are, by
construction, in $H^s(bX).$ Proposition~\ref{prop_bbw} implies that
$[U]_{bX}\in H^{-\ha},$ as $K_1^{\mu}$ is a smoothing operator, the last term,
$K_1^{\mu}[U]_{bX},$ is smooth. Thus $[U-U_1]_{bX}$ is in $H^s(bX),$ and
$U-U_1$ therefore belongs to $H^{s+\frac 12}(X);$ hence $U=U_1+U-U_1$ does as
well.  The estimate~\eqref{eq:100.11} follows easily from the definition of
$U_1$ and~\eqref{7.22.12}.
\end{proof}
Thus the solution $w$ constructed above is smooth on $\bX;$ this completes
the proofs of the proposition and Theorem~\ref{thm3}
\end{proof}

Using Theorem~\ref{thm3} we can describe the domains of
$\overline{(\eth^{\eo}_{\pm},\cR^{\prime\eo}_{\pm})}.$
\begin{corollary} The domains of the closures,
  $\overline{(\eth^{\eo}_{\pm},\cR^{\prime\eo}_{\pm})},$ are given by
\begin{equation}
\Dom(\overline{\eth^{\eo}_{\pm},\cR^{\prime\eo}_{\pm}})=
\{u\in L^2(X;F):\eth^{\eo}_{\pm}u\in L^2(X;F'),
\cR^{\prime\eo}_{\pm}u\restrictedto_{bX}=0
\}
\label{5.23.1}
\end{equation}
\end{corollary}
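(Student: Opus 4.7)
The plan is to establish both inclusions in~\eqref{5.23.1}. Write $T := (\eth^{\eo}_{E\pm},\cR^{\prime\eo}_{E\pm})$, $T':=(\eth^{\ooee}_{E\pm},\cR^{\prime\ooee}_{E\pm})$, and let $T_{\max}$ denote the operator with action $u\mapsto \eth^{\eo}_{E\pm} u$ (interpreted distributionally) and domain equal to the right-hand side of~\eqref{5.23.1}. The inclusion $\Dom(\bar T)\subseteq\Dom(T_{\max})$ is routine: if $u_n\in\Dom(T)$ converges to $u$ in the graph norm, then $\eth^{\eo}_{E\pm}u\in L^2$ as the $L^2$-limit of $\eth^{\eo}_{E\pm}u_n$, and Proposition~\ref{prop_bbw} together with the $H^{-\ha}(bX)$-boundedness of $\cR^{\prime\eo}_{E\pm}$ allows the boundary condition $\cR^{\prime\eo}_{E\pm}[u]_{bX}=0$ to pass to the limit.

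For the reverse inclusion $\Dom(T_{\max})\subseteq\Dom(\bar T)$, I would use Theorem~\ref{thm3} to identify $\bar T$ as an adjoint. Theorem~\ref{thm3} gives $T^*=\overline{T'}$, and since closure preserves the $L^2$-adjoint, $\bar T = T^{**} = (\overline{T'})^* = (T')^*$. It therefore suffices to show $\Dom(T_{\max})\subseteq\Dom((T')^*)$, i.e., that for every $u\in\Dom(T_{\max})$ and every $v\in\Dom(T')=\{v\in\CI(\bX):\cR^{\prime\ooee}_{E\pm}[v]_{bX}=0\}$, the Green identity
\[
\langle \eth^{\eo}_{E\pm} u, v\rangle_X = \langle u, \eth^{\ooee}_{E\pm} v\rangle_X
\]
holds.

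The main technical step, and where I expect the principal obstacle, is Green's formula for sections that are merely $L^2$ with $L^2$-Dirac image. For smooth $u$ one has
\[
\langle \eth^{\eo}_{E\pm}u,v\rangle_X - \langle u,\eth^{\ooee}_{E\pm}v\rangle_X = -\langle \sigma_1(\eth^{\eo}_{E\pm},-id\rho)[u]_{bX},[v]_{bX}\rangle_{bX},
\]
and the vanishing of the right-hand side under the joint conditions $\cR^{\prime\eo}_{E\pm}[u]_{bX}=0$ and $\cR^{\prime\ooee}_{E\pm}[v]_{bX}=0$ amounts to an algebraic duality between the two projections with respect to the boundary pairing induced by $\sigma_1(\eth^{\eo}_{E\pm},-id\rho)$. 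This I would verify directly from the block forms of Lemmas~\ref{lemm4} and~\ref{lemm5} together with~\eqref{7.13.3}, treating the pseudoconvex/pseudoconcave and even/odd/parity-of-$n$ cases in turn; the upshot is that $\sigma_1(\eth^{\eo}_{E\pm},-id\rho)$ conjugates $\cR^{\prime\eo}_{E\pm}$ to $\Id-\cR^{\prime\ooee}_{E\pm}$, so the pairing annihilates the combined kernels. To extend Green's formula from smooth $u$ to arbitrary $u\in\Dom(T_{\max})$, I would integrate over the regions $\{\mp\rho\geq\epsilon\}$, where $u$ is smooth by interior elliptic regularity for $\eth^{\eo}_{E\pm}$, apply the classical Green's formula on $\{\rho=\mp\epsilon\}$, and let $\epsilon\to 0$: the interior integrals converge by dominated convergence in $L^2$, while convergence of the boundary integrals follows from the continuity in $\epsilon$ of the traces $[u]_{\{\rho=\mp\epsilon\}}\to [u]_{bX}$ in $H^{-\ha}(bX)$ furnished by Proposition~\ref{prop_bbw}.
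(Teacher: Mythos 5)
Your proof follows essentially the same route as the paper's: identify $\overline{(\eth^{\eo}_{\pm},\cR^{\prime\eo}_{\pm})}$ with $(\eth^{\ooee}_{\pm},\cR^{\prime\ooee}_{\pm})^*$ via Theorem~\ref{thm3}, then verify that any $u$ on the right-hand side of~\eqref{5.23.1} lies in that adjoint domain by Green's identity, using the $H^{-\ha}(bX)$-continuity of traces from Proposition~\ref{prop_bbw} to pass the boundary pairing to the limit. The paper compresses the forward inclusion and the algebraic duality of $\cR^{\prime\eo}_{\pm}$ and $\Id-\cR^{\prime\ooee}_{\pm}$ under $\sigma_1(\eth^{\eo}_{\pm},-id\rho)$ into a single sentence, whereas you spell these out; the substance and the key ingredients are the same.
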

\begin{remark} Note that Proposition~\ref{prop_bbw} implies that
  $u\restrictedto_{bX}\in H^{-\ha}(bX).$ It is in this sense that the boundary
  condition in~\eqref{5.23.1} should be understood.
\end{remark}
\begin{proof} By Theorem~\ref{thm3}, we need only show that $u$ satisfying the
  conditions in~\eqref{5.23.1} belong to
  $\Dom((\eth^{\ooee}_{\pm},\cR^{\prime\ooee}_{\pm})^*).$ 
To show this we need only show that for \emph{smooth} sections, $v,$ with
  $\cR^{\ooee}_{\pm}v\restrictedto_{bX}=0,$ we have
\begin{equation}
\langle\eth^{\ooee}_{\pm} v,u\rangle=\langle v,\eth^{\eo}_{\pm} u\rangle.
\end{equation}
This follows by a simple limiting argument, because the map $\epsilon\mapsto
u\restrictedto_{\rho =\epsilon}$ is continuous in the $H^{-\ha}$-topology and
$v$ is smooth.
\end{proof}

As a corollary of Lemma~\ref{lem14} we get estimates for the second order operators
$\eth^{\ooee}_{\pm}\eth^{\eo}_{\pm},$ with subelliptic boundary conditions.
\begin{corollary}\label{cor3}  Let $X$ be a strictly pseudoconvex (pseudoconcave)
  manifold, $E\to X$ a compatible complex vector bundle. For $s\geq 0$ there
  exist constants $C_s$ such that if $u\in L^2, \eth^{\eo}_{E\pm}u\in L^2,
  \eth^{\ooee}_{E\pm}\eth^{\eo}_{E\pm}u\in H^s$ and $\cR^{\prime\eo}_{E\pm} [u]_{bX}=0,
  \cR^{\prime\ooee}_{E\pm}[\eth^{\eo}_{E\pm}u]=0$ in the sense of distributions, then
\begin{equation}
\|u\|_{H^{s+1}}\leq
C_s[\|\eth^{\ooee}_{E\pm}\eth^{\eo}_{E\pm}u\|_{H^s}+\|u\|_{L^2}].
\label{7.23.3}
\end{equation}
\end{corollary}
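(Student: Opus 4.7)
The plan is to deduce the estimate by two successive applications of Theorem~\ref{thm33}: once to the first-order quantity $v = \eth^{\eo}_{E\pm}u$ (which, by hypothesis, satisfies the adjoint subelliptic boundary condition), and once to $u$ itself. The only additional ingredient needed is an $L^2$-control of $\|\eth^{\eo}_{E\pm}u\|$ by $\|\eth^{\ooee}_{E\pm}\eth^{\eo}_{E\pm}u\|$ and $\|u\|$, which will come from the adjoint identity furnished by Theorem~\ref{thm3}.

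First, I would set $v = \eth^{\eo}_{E\pm}u$. By assumption, $v\in L^2$, $\eth^{\ooee}_{E\pm}v = \eth^{\ooee}_{E\pm}\eth^{\eo}_{E\pm}u \in H^s$, and $\cR^{\prime\ooee}_{E\pm}[v]_{bX}=0$ in the sense of distributions. Applying Theorem~\ref{thm33} to the boundary value problem $(\eth^{\ooee}_{E\pm},\cR^{\prime\ooee}_{E\pm})$ with data $v$ yields
\begin{equation*}
\|v\|_{H^{s+\ha}} \le C_s\bigl[\,\|\eth^{\ooee}_{E\pm}\eth^{\eo}_{E\pm}u\|_{H^s} + \|\eth^{\eo}_{E\pm}u\|_{L^2}\,\bigr].
\end{equation*}
Next, since $u\in L^2$, $\cR^{\prime\eo}_{E\pm}[u]_{bX}=0$, and now $\eth^{\eo}_{E\pm}u = v \in H^{s+\ha}$, a second application of Theorem~\ref{thm33}, this time to $(\eth^{\eo}_{E\pm},\cR^{\prime\eo}_{E\pm})$ with Sobolev index $s+\ha$, gives
\begin{equation*}
\|u\|_{H^{s+1}} \le C_{s+\ha}\bigl[\,\|v\|_{H^{s+\ha}} + \|u\|_{L^2}\,\bigr].
\end{equation*}

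To close the argument I still need to bound $\|\eth^{\eo}_{E\pm}u\|_{L^2}$ by the right-hand side of~\eqref{7.23.3}. Here Theorem~\ref{thm3} is decisive: it asserts that $(\eth^{\ooee}_{E\pm},\cR^{\prime\ooee}_{E\pm})^* = \overline{(\eth^{\eo}_{E\pm},\cR^{\prime\eo}_{E\pm})}$. By the corollary describing the domains, $u\in\Dom(\overline{(\eth^{\eo}_{E\pm},\cR^{\prime\eo}_{E\pm})})$ and $v=\eth^{\eo}_{E\pm}u\in\Dom(\overline{(\eth^{\ooee}_{E\pm},\cR^{\prime\ooee}_{E\pm})})$, so the adjoint pairing holds without a boundary contribution:
\begin{equation*}
\|\eth^{\eo}_{E\pm}u\|_{L^2}^2 = \langle \eth^{\eo}_{E\pm}u,\eth^{\eo}_{E\pm}u\rangle = \langle u,\,\eth^{\ooee}_{E\pm}\eth^{\eo}_{E\pm}u\rangle \le \tfrac{1}{2}\bigl(\|u\|_{L^2}^2 + \|\eth^{\ooee}_{E\pm}\eth^{\eo}_{E\pm}u\|_{L^2}^2\bigr).
\end{equation*}
Inserting this bound into the two displayed inequalities above and relabeling the constant yields~\eqref{7.23.3}.

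The main (minor) subtlety is ensuring that the integration by parts is legitimate under only the distributional hypotheses stated: a priori $u$ and $v$ are merely $L^2$ with distributional boundary traces in $H^{-\ha}(bX)$. This is precisely what Theorem~\ref{thm3} (together with its corollary characterizing the domain of the closure) guarantees, and it is the only place where more than the already-established subelliptic estimate of Theorem~\ref{thm33} is used. Everything else is a mechanical composition of the two half-derivative gains.
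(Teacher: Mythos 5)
Your proof is correct, and it takes a genuinely different (and in fact more direct) route than the paper's. The paper applies Lemma~\ref{lem14} (the subelliptic estimate for the coupled $2\times 2$ system $\cD^0_\pm$) to $U=(u,\eth^{\eo}_{\pm}u)$ and then bootstraps in half-derivative increments, because the right-hand side of that estimate contains $\|\eth^{\eo}_{\pm}u\|_{H^s}$, which is not available a priori; after enough iterations it lands on $u,\eth^{\eo}_{\pm}u\in H^{s+\ha}$ and finishes with one application of Theorem~\ref{thm33}. You instead decouple the two first-order problems: you apply Theorem~\ref{thm33} first to $v=\eth^{\eo}_{\pm}u$ (where the data $\eth^{\ooee}_{\pm}v=\eth^{\ooee}_{\pm}\eth^{\eo}_{\pm}u\in H^s$ is given outright, so no bootstrapping is needed), and then to $u$ with the improved regularity of $v$. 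This is cleaner and does not require invoking the second-order system machinery of Lemma~\ref{lem14} at all. The one extra ingredient you need, which the paper's write-up glosses over, is the elimination of the term $\|\eth^{\eo}_{\pm}u\|_{L^2}$ from the final right-hand side; your integration by parts, justified via Theorem~\ref{thm3} and the domain corollary (the pairing $\langle \eth^{\eo}_{\pm}u,v\rangle=\langle u,\eth^{\ooee}_{\pm}v\rangle$ holding for $u\in\Dom(\overline{(\eth^{\eo}_{\pm},\cR^{\prime\eo}_{\pm})})$, $v\in\Dom((\eth^{\eo}_{\pm},\cR^{\prime\eo}_{\pm})^*)$), is exactly the right tool and makes this step explicit. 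What the paper's approach buys is uniformity with the proof of Theorem~\ref{thm3}, where the coupled system $\cD^\mu_\pm$ is genuinely needed for $\mu\neq 0$; what yours buys is a shorter, self-contained argument that only touches the already-established scalar estimates.
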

\begin{proof}
We apply Lemma~\ref{lem14} to $U=(u,\eth^{\eo}_{\pm} u).$ Initially we
see that $\cD^{0}_{\pm} U\in L^2.$ The lemma shows that $\eth^{\eo}_{\pm}u\in H^{\ha},$
and therefore $\cD^{0}_{\pm} U\in H^{\ha}.$ Applying the lemma
recursively, we eventually deduce that $\cD^{0}_{\pm} U\in H^{s}$ and that
there is constant $C_s'$ so that
\begin{equation}
\|u\|_{H^{s+\ha}}+\|\eth^{\eo}_{\pm} u\|_{H^{s+\ha}}\leq
C_s'[\|\eth^{\ooee}_{\pm}\eth^{\eo}_{\pm}u\|_{H^s}+\|u\|_{L^2}].
\end{equation}
It follows from Theorem~\ref{thm33} that, for a constant $C_s'',$ we have
\begin{equation}
\|u\|_{H^{s+1}}\leq C_s''[\|u\|_{H^{s+\ha}}+\|\eth^{\eo}_{\pm} u\|_{H^{s+\ha}}]
\end{equation}
Combining the two estimates gives~\eqref{7.23.3}.
\end{proof}

In the case that $X$ is a complex manifold with boundary, these estimates imply
analogous results for the modified $\dbar$-Neumann problem acting on individual
form degrees. These results are stated and deduced from Corollary~\ref{cor3}
in~\cite{Epstein4}.


\begin{thebibliography}{10}

\bibitem{Beals-Greiner1}
{\sc R.~Beals and P.~Greiner}, {\em Calculus on {H}eisenberg {M}anifolds},
  vol.~119 of Annals of Mathematics Studies, Princeton University Press, 1988.

\bibitem{Beals-Stanton}
{\sc R.~Beals and N.~Stanton}, {\em The heat equation for the $\dbar$-{N}eumann
  problem, {I}}, Comm. PDE, 12 (1987), pp.~351--413.

\bibitem{BBW}
{\sc B.~Booss-Bavnbek and K.~P. Wojciechowsi}, {\em Elliptic Boundary Problems
  for the Dirac Operator}, Birkh\"auser, Boston, 1996.

\bibitem{duistermaat}
{\sc J.~Duistermaat}, {\em The heat kernel Lefschetz fixed point theorem
  formula for the Spin-c Dirac Operator}, Birkh\"auser, Boston, 1996.

\bibitem{Epstein4}
{\sc C.~L. Epstein}, {\em Subelliptic {S}pin${}_{\bbC}$ {D}irac operators, {I}},
  Annals of Math.,  166 (2007), pp.~183-214.

\bibitem{EpsteinMelrose3}
{\sc C.~L. Epstein and R.~Melrose}, {\em The {H}eisenberg algebra, index theory
  and homology}, preprint, 2004.

\bibitem{Greiner-Stein1}
{\sc P.~C. Greiner and E.~M. Stein}, {\em Estimates for the
  {$\overline{\partial}$}-{N}eumann problem}, Mathematical Notes, Princeton
  University Press, 1977.

\bibitem{Hormander3}
{\sc L.~H{\"o}rmander}, {\em The Analysis of Linear Partial Differential
  Operators}, vol.~3, Springer Verlag, Berlin{,} Heidelberg{,} New York{,}
  Tokyo, 1985.

\bibitem{Kato}
{\sc T.~Kato}, {\em Perturbation Theory for Linear Operators, corrected 2nd
  printing} Springer Verlag, Berlin Heidelberg, 1980.


\bibitem{KobayashiNomizu2}
{\sc S.~Kobayashi and K.~Nomizu}, {\em Foundations of Differential Geometry,
  Vol. 2}, John Wiley and Sons, New York, 1969.

\bibitem{LawsonMichelsohn}
{\sc H.~B. {Lawson Jr.} and M.-L. Michelsohn}, {\em Spin Geometry}, vol.~38 of
  Princeton Mathematical Series, Princeton University Press, 1989.

\bibitem{Seeley0}
{\sc R.~Seeley}, {\em Singular integrals and boundary value problems}, Amer.
  Jour. of Math., 88 (1966), pp.~781--809.

\bibitem{Taylor3}
{\sc M.~E. Taylor}, {\em Noncommutative microlocal analysis, part {I}},
  vol.~313 of Mem. Amer. Math. Soc., AMS, 1984.

\bibitem{Taylor6}
\leavevmode\vrule height 2pt depth -1.6pt width 23pt, {\em Partial Differential
  Equations, Vol. 2}, vol.~116 of Applied Mathematical Sciences, Springer Verlag, New
  York, 1996.

\bibitem{wells}
{\sc R.~Wells}, {\em Differential Analysis on Complex Manifolds}, vol.~165 of
  Graduate Texts in Mathematics, Springer Verlag, New York, 1980.

\end{thebibliography}
\end{document}